\setlist{noitemsep}
\theoremstyle{plain}
\newtheorem{theorem}{Theorem}[section]
\newtheorem{lemma}[theorem]{Lemma}
\newtheorem{proposition}[theorem]{Proposition}
\theoremstyle{definition}
\newtheorem{remark}[theorem]{Remark}
\newtheorem{definition}[theorem]{Definition}
\newtheorem{example}[theorem]{Example}
\newtheorem*{theorem*}{Theorem}
\DeclareTextFontCommand{\tdef}{\itshape\bfseries}
\DeclareMathOperator{\supp}{supp}
\newcommand{\wreath}{\text{Wreath}}
\newcommand{\core}{\text{Core}}
\newcommand{\area}{\text{Area}}
\newcommand{\B}{\mathsf{B}}
\newcommand{\Tstep}{{\mathcal T_\textsf{step}}}
\newcommand{\Tfrac}{{\mathcal T_\textsf{frac}}}
\newcommand{\Ttop}{{\mathcal T_\textsf{top}}}
\newcommand{\Sstep}{{\Sigma_\textsf{step}}}
\newcommand{\Ms}{{\mathbf M}_s}
\newcommand{\EOS}{{{\mathbf E}_1^*}}
\newcommand{\EOSS}{{{\mathbf E}}}%{{{\mathbf E}_1^*(s)}}
\newcommand{\myvcenter}[1]{\ensuremath{\vcenter{\hbox{#1}}}}
\newcommand{\T}{\mathcal{T}}
\newcommand{\type}{\text{type}}
\newcommand{\PP}{\mathcal{PP}}
\newcommand{\bt}{\mathsf{b}}
\numberwithin{equation}{section}
\newcommand{\Biggg}{\bBigg@{4}}
\newcommand{\svect}[3]{%
\Big(\begin{smallmatrix}%
#1 \\ #2 \\ #3%
\end{smallmatrix}\Big)%
}
\newcommand{\tileA}[9]{%
\fill[fill=#9,draw=black,thick,shift={#8}]
       (0,0,0)  node[above right] {\tiny $#2$}
    -- (0,1,0)  node[above] {\tiny $#3$}
    -- (-1,1,0) node[above left] {\tiny $#4$}
    -- (-1,1,1) node[below left] {\tiny $#5$}
    -- (-1,0,1) node[below] {\tiny $#6$}
    -- (0,0,1)  node[below right] {\tiny $#7$}
    -- cycle;
\node[shift={#8}] at (-1,0,0) {$#1$};
}
\def\tileC#1#2#3#4{\def\blaz{#1}\def\blaa{#2}\def\blab{#3}\def\blac{#4}\tileCbuff}
\def\tileCbuff#1#2#3#4#5#6#7#8#9{%
\fill[fill=#9,draw=black,thick,shift={#8}] (0,0,0)
       (0,0,0)   node[below] {\tiny $\blaa$}
    -- (0,-1,0)  node[below] {\tiny $\blab$}
    -- (1,-1,0)  node[below right ] {\tiny $\blac$}
    -- (1,-1,-1) node[above right] {\tiny $#1$}
    -- (1,0,-1)  node[above] {\tiny $#2$}
    -- (0,0,-1)  node[above] {\tiny $#3$}
    -- (0,1,-1)  node[above] {\tiny $#4$}
    -- (-1,1,-1) node[above left] {\tiny $#5$}
    -- (-1,1,0)  node[below left] {\tiny $#6$}
    -- (-1,0,0)  node[below] {\tiny $#7$}
    -- cycle;
\node[shift={#8}] at (1,0,0) {$\blaz$};
}
\def\tuileC#1#2#3#4{\def\blaa{#1}\def\blab{#2}\def\blac{#3}\def\blad{#4}\tuileCbuff}
\def\tuileCbuff#1#2#3#4#5#6#7#8#9{%
\fill[fill=\blac,draw=black,thick,shift={\blab}]
         (0,1,1) node[above left] {\tiny $\blad$}
    -- ++(0,0,1) node[below left] {\tiny $#1$}
    -- ++(0,-1,0) node[below] {\tiny $#2$}
    -- ++(1,0,0) node[below right] {\tiny $#3$}
    -- ++(0,0,-2) node[above right] {\tiny $#4$}
    -- ++(0,1,0) node[below right] {\tiny $#5$}
    -- ++(0,0,-1) node[above right] {\tiny $#6$}
    -- ++(0,1,0) node[above] {\tiny $#7$}
    -- ++(-1,0,0) node[above left] {\tiny $#8$}
    -- ++(0,0,2) node[below left] {\tiny $#9$}
    -- cycle;
\node[shift={\blab}] at (0,0.5,0) {$\blaa$};
}
\newcommand{\tuileB}[9]{%
\fill[fill=#3,draw=black,thick,shift={#2}]
         (2,0,0) node[above] {\tiny $#4$}
    -- ++(-1,0,0) node[above left] {\tiny $#5$}
    -- ++(0,0,2) node[below left] {\tiny $#6$}
    -- ++(0,-1,0) node[below] {\tiny $#7$}
    -- ++(1,0,0) node[below right] {\tiny $#8$}
    -- ++(0,0,-2) node[above right] {\tiny $#9$}
    -- cycle;
\node[shift={#2}] at (1,-1,0.5) {$#1$};
}
\newcommand{\tuileA}[9]{%
\fill[fill=#3,draw=black,thick,shift={#2}]
         (2,1,-1) node[above] {\tiny $#4$}
    -- ++(-1,0,0) node[above left] {\tiny $#5$}
    -- ++(0,0,1) node[below left] {\tiny $#6$}
    -- ++(0,-1,0) node[below] {\tiny $#7$}
    -- ++(1,0,0) node[below right] {\tiny $#8$}
    -- ++(0,0,-1) node[above right] {\tiny $#9$}
    -- cycle;
\node[shift={#2}] at (1,0,-1) {$#1$};
}
\newcommand{\sqtA}[1]{%
\fill[fill=white,draw=black,thick,shift={#1}]
(0, 0) -- (1, 0) -- (1, 1) -- (2, 1) -- (2, 2) -- (1, 2) -- (1, 3) -- (0, 3) -- (0, 2) -- (0, 1) -- cycle;
\node[shift={#1}] at (0.5,1.5) {$A$};
}
\newcommand{\sqtB}[1]{%
\fill[fill=white,draw=black,thick,shift={#1}]
(0, 0) -- (1, 0) -- (1, 1) -- (1, 2) -- (0, 2) -- (0, 1) -- cycle;
\node[shift={#1}] at (0.5,1.5) {$B$};
}
\newcommand{\sqtC}[1]{%
\fill[fill=white,draw=black,thick,shift={#1}]
(0, 0) -- (1, 0) -- (1, 1) -- (2, 1) -- (2, 2) -- (1, 2) -- (0, 2) -- (0, 2) -- cycle;
\node[shift={#1}] at (0.5,1.5) {$C$};
}
\begin{document}

\title{Topological substitution for the aperiodic Rauzy fractal tiling}

\date{}

\author{
Nicolas B\'edaride\footnote{
    Aix Marseille Univ, CNRS, Centrale Marseille, I2M, Marseille, France.
    Email: \texttt{nicolas.bedaride@univ-amu.fr}}
\and
Arnaud Hilion\footnote{
    Aix Marseille Univ, CNRS, Centrale Marseille, I2M, Marseille, France.
    Email: \texttt{arnaud.hilion@univ-amu.fr}}
\and
Timo Jolivet\footnote{
    Aix Marseille Univ, CNRS, LIF, Marseille, France.
    Email: \texttt{timo.jolivet@lif.univ-mrs.fr}}
    }

\maketitle

%%%%%%%%%%%%%%%%%%%%%%%%%%%%%%%%%%%%%%%%%%%%%%
%%%%%%%%%%%%%%%%%%%%%%%%%%%%%%%%%%%%%%%%%%%%%%

\begin{abstract}
We consider two families of planar self-similar tilings of different nature:
the tilings consisting of translated copies
of the fractal sets defined by an iterated function system,
and the tilings obtained as a geometrical realization of a topological substitution
(an object of purely combinatorial nature, defined in \cite{Beda.Hil.10}).
We establish a link between the two families in a specific case,
by defining an explicit topological substitution and by proving that
it generates the same tilings as those associated with the Tribonacci Rauzy fractal.
\end{abstract}

\section{Introduction}

\subsection{Main result and motivation}
\label{sec:motiv}
Self-similar tilings of the plane are characterized by the existence of
a common subdivision rule for each tile, such that the tiling obtained by subdivising each tile
is the same as the original one, up to a contraction.
These tilings have been introduced by Thurston \cite{Thu89}
and they are studied in several fields including dynamical systems and theoretical physics, see \cite{BG13}.
A particular class of self-similar tilings arises from \emph{substitutions},
which are ``inflation rules'' describing how to replace a geometrical shape
by a union of other geometrical shapes (within a finite set of basic shapes).
Among these, an important class consists of the planar tilings
by the so-called Rauzy fractals associated with some one-dimensional substitutions.
These fractals are used to provide geometrical interpretations of substitution dynamical systems.
They also provide an interesting class of aperiodic self-similar tilings of the plane, see~\cite{Pyt.02,CANT}.

The aim of this article is to establish a formal link between two self-similar tilings
constructed from two different approaches:
\begin{itemize}
\item
Using an \emph{iterated function system (IFS)},
that is, specifying the shapes and the positions of the tiles
with planar set equations (using contracting linear maps),
which define the tiles as unions of smaller copies of other tiles.
In particular, an IFS does make use of the Euclidean metric of the plane.
\item
Using a \emph{topological substitution},
that is, specifying which tiles are allowed to be neighbors,
and how the neighboring relations are transferred when we
``inflate'' the tiles by substitution to construct the tiling.
With this kind of substitution, there is no use in anyway of a the Euclidean metric: the tiles do not have a metric shape (they are just topological discs).
\end{itemize}
In other words, we tackle the following question:
\begin{quote}
\emph{Given a tiling defined by an IFS,
is there a topological substitution
which generates an equivalent tiling?
If yes, how can we construct it?
In other words, when is it possible to describe the geometry of a self-similar tiling (geometrical constraints)
by using a purely combinatorial rule (combinatorial constraints) ?
}
\end{quote}
In this article we answer this question for the tilings of the plane by translated copies of the Rauzy fractals
associated with the Tribonacci substitution (which are defined by an IFS).
We define a particular topological substitution $\sigma$ (Figure~\ref{tribo-noms},~p.~\pageref{tribo-noms})
and we prove that the Tribonacci fractal tiling $\Tfrac$
and the tiling $\Ttop$ generated by the topological substitution are equivalent in a strong way.
More precisely:

\begin{itemize}
\item Associated with the Tribonacci substitution $s: 1\mapsto 12$, $2\mapsto 13$, $3\mapsto 1$,
there is a dual substitution $\EOSS$ (see Section~\ref{sect:dual_sub})
which acts on facets in $\mathbb R^3$.
Iteration of this dual substitution gives rise to a stepped surface $\Sstep$ (a surface which is a union of facets),
that is included in the $1$-neighbourhood of some (linear) plane $\mathcal P$ in $\mathbb R^3$.
Projecting the stepped surface $\Sstep$ (and its facets) on $\mathcal P$ gives rise to a tiling $\Tstep$ of $\mathcal P$.
It is known \cite{ABI02,CANT} that the tiling $\Tfrac$ is strongly related to a tiling $\Tstep$. 

\item The topological substitution $\sigma$ can be iterated on a tile $C$, giving rises to a 2-dimensional CW-complex $\sigma^\infty(C)$ homeomorphic to a plane, see Section~\ref{sect:deftoptribo}. However, this complex is not embedded a priori in a plane, even if it turns out that $\sigma^\infty(C)$ can be effectively realized as a tiling $\Ttop$ of the plane, see Proposition~\ref{prop:topotiling}. To locate a tile $T$ in $\sigma^\infty(C)$ relatively to another one $T'$, we build a vector (an {\em ``position''}) $\omega_0(T,T')\in\mathbb Z^3$: by construction, this vector depends a priori on the choice of a combinatorial path from $T$ to $T'$ in $\sigma^\infty(C)$, and we have to prove that in fact it is independent of the path, see Section~\ref{sec:position}.

\item Since it is already explained in the literature how to relate $\Tfrac$ and $\Sstep$, and since  we explain how $\Ttop$ is build from $\sigma^\infty(C)$, the main result of the paper is Theorem~\ref{theo:Psi} that states an explicit formula which define a bijection $\Psi$ between tiles in $\sigma^\infty(C)$ and facets in $\Sstep$: we reproduce it just below.

\end{itemize}

\begin{theorem*}
The map $\Psi$ defined, for every tile $T$ of $\sigma^\infty(C)$, by:
\begin{equation}\label{eq:Psi}
\Psi(T)=[\mathbf M_s^3(\omega_0(T,C)+\mathbf u_{\type(T)}),\theta(\type(T))]^*
\end{equation}
is a bijection from the set of tiles of $\sigma^\infty(C)$ to the set of facets of $\Sigma_{step}$.
\end{theorem*}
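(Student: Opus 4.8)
The plan is to establish that $\Psi$ is a bijection by constructing an explicit inverse, and the key to this is to exploit the fact that both sides—the tiles of $\sigma^\infty(C)$ and the facets of $\Sstep$—are generated by compatible substitution dynamics. First I would make precise the combinatorial data entering the formula: the type map $\type(T)\in\{A,B,C\}$ records which of the three tile shapes $T$ is, the vector $\mathbf u_{\type(T)}\in\mathbb Z^3$ is a fixed offset depending only on the type, $\theta$ assigns to each type the corresponding facet type in $\mathbb R^3$, and $[\,\cdot\,,\,\cdot\,]^*$ denotes the dual facet with a given base point and orientation. The position $\omega_0(T,C)\in\mathbb Z^3$ is well-defined by the path-independence result established in Section~\ref{sec:position}, so the right-hand side of \eqref{eq:Psi} is unambiguously a facet of $\mathbb R^3$.

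The main structural step is to prove that $\Psi$ intertwines the two substitution rules: I would show that applying the topological substitution $\sigma$ to a tile $T$ and then mapping the resulting subtiles by $\Psi$ yields exactly the facets obtained by applying the dual substitution $\EOSS$ to $\Psi(T)$. Concretely, this amounts to checking, for each of the three tile types and each subtile appearing in $\sigma(T)$, that the position vector transforms correctly—that is, that $\omega_0$ of a subtile relative to $C$ equals $\mathbf M_s^{-1}$ (or the appropriate incidence-matrix action) applied to $\omega_0(T,C)$ plus the internal offset prescribed by the combinatorics of $\sigma$, and that the type/orientation data $\theta$ is carried to the matching facet type under $\EOSS$. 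This commutation square is the heart of the argument: once it holds on the level of a single substitution step, it propagates to $\sigma^\infty(C)$ and $\Sstep$ by the common self-similarity, because both are obtained as the increasing union of iterated images of a single seed.

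With the intertwining in hand, surjectivity and injectivity follow from the corresponding properties of the substitutive structures. For \textbf{surjectivity}, every facet of $\Sstep$ arises by iterating $\EOSS$ from the initial patch, and the intertwining lets me pull each such facet back to a tile produced by iterating $\sigma$, i.e. a tile of $\sigma^\infty(C)$; I would check that the base tile $C$ maps to the seed facet (a direct computation from \eqref{eq:Psi} with $\omega_0(C,C)=0$), which anchors the induction. For \textbf{injectivity}, I would argue that two distinct tiles $T\neq T'$ cannot have $\Psi(T)=\Psi(T')$: if their types agree then the facet base points $\mathbf M_s^3(\omega_0(T,C)+\mathbf u)$ and $\mathbf M_s^3(\omega_0(T',C)+\mathbf u)$ coincide only when $\omega_0(T,C)=\omega_0(T',C)$ (using invertibility of $\mathbf M_s$), and I must then show that equal type and equal position force $T=T'$—this is exactly the statement that a tile of $\sigma^\infty(C)$ is determined by its type and its position relative to $C$; if the types differ, the distinct facet orientations $\theta$ keep the images apart.

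I expect the hard part to be the injectivity/position-determination step, namely proving that the pair $(\type(T),\omega_0(T,C))$ is a complete invariant of the tile $T$ within the abstract CW-complex $\sigma^\infty(C)$. Unlike surjectivity, which rides almost formally on the substitutive generation, injectivity requires that the topological substitution does not ``fold'' the complex onto itself—that distinct combinatorial tiles genuinely occupy distinct positions. This is precisely where the path-independence of $\omega_0$ from Section~\ref{sec:position} must be upgraded to a faithfulness statement, and I anticipate needing to combine the intertwining with the known injectivity of the $\EOSS$-dynamics on facets of the stepped surface (no two facets of $\Sstep$ overlap), transporting that geometric separation back through $\Psi$ to separate the abstract tiles.
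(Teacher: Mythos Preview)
Your plan is correct and matches the paper's strategy: the commutation you describe is exactly Proposition~\ref{propcle} ($\varphi_0\circ\hat\sigma=\EOSS\circ\varphi_0$), and bijectivity is then read off from the substitutive generation of both sides. The one packaging difference is that the paper works not with $\Psi$ directly but with the auxiliary map $\varphi_0$ sending a tile $T$ to the \emph{patch} of facets $\EOSS^3(\Psi(T))$; because $\varphi_0(\sigma^n(C),C)=\EOSS^n(\mathcal U)$ lies in $\mathcal M^\circ$ (multiplicity~$1$ everywhere, Proposition~\ref{prop:ai2}), the summands $\varphi_0(T,C)$ are automatically disjoint, which disposes of your ``hard part'' (injectivity, i.e.\ that $(\type(T),\omega_0(T,C))$ determines $T$) in one stroke rather than as a separate faithfulness argument.
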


The notation used to state this theorem will be introduced along the paper. But we want to stress that the fact the formula (\ref{eq:Psi}) makes use of the position map $\omega_0$ ensures that if two tiles $T$ and $T'$ are close in $\sigma^\infty(C)$, then their images $\Psi(T)$ and $\Psi(T')$ will be close in $\Sstep$. In fact, it is easy to convince oneself that something like that should be true by having a look at Figure~\ref{pav-frac}, where three corresponding subsets of the tilings $\Ttop$, $\Tfrac$ and $\Tstep$ are given.

\begin{figure}[ht]
\centering
\myvcenter{\includegraphics[width=0.3\linewidth]{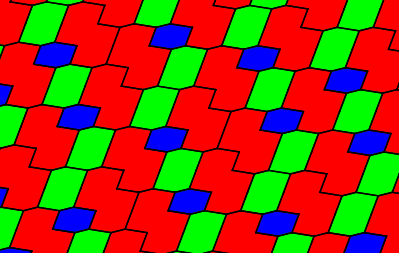}}
\hfil
\myvcenter{\includegraphics[width=0.3\linewidth]{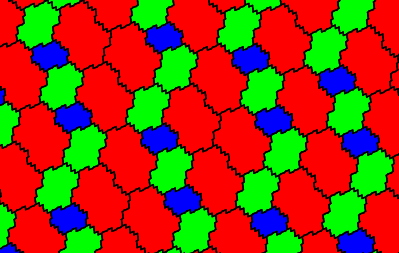}}
\hfil
\myvcenter{\includegraphics[width=0.3\linewidth]{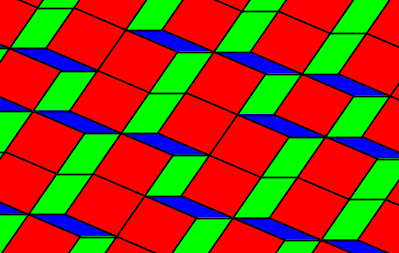}}
\caption{The three tilings  $\Ttop$, $\Tfrac$ and $\Tstep$ (from left to right).}
\label{pav-frac}
\end{figure}

On Figure~\ref{pav-frac}, it is also worth to notice that the underlying CW-complexes of $\Ttop$ and $\Tstep$ are not the same. Indeed, the valence of a vertex in $\Ttop$ is either 2 or 3, whereas the valence of a vertex in $\Tstep$ can be equal to 3, 4, 5 or 6. In that sense, the two tilings $\Tstep$ and $\Ttop$ are really different.

We have chosen to present our results on a specific substitution rather than in a general form
because it makes presentation clearer and it avoids many ``artificial'' technicalities.
Moreover, we do not know what a general answer to the above question may look like.
However, we give some insight about this general question in Section~\ref{conclu}.

\subsection{Comparison of some different notions of substitutions}

The word ``substitution'' is used in many different ways in the literature.
The list below reviews several such notions,
going from the most geometrical one (IFS) to the most combinatorial one (topological substitutions).
Indeed, as observed by Peyri\`ere \cite{Pey.86}, having a combinatorial description of substitutive tiling turns out to be very useful in many situations.
This list is not exhaustive, it only contains the notions of substitutions that we explicitly use in this article.
See~\cite{Fra08} for another survey about geometrical substitutions.

\paragraph{One-dimensional symbolic substitutions}
These substitutions are used to generated infinite one-dimensional words
which are studied mostly for their word-theoretical and dynamical properties.
An example is the Tribonacci substitution $1 \mapsto 12, 2 \mapsto 13, 3 \mapsto 1$
defined in Section~\ref{sec:subst}.
See~\cite{Pyt.02} for a classical reference.
This is the only notion of the present list which is only symbolic (not geometrical).

\paragraph{Self-affine substitutions (iterated function systems)}
Also known as \emph{substitution Delone sets}~\cite{LW03},
this notion is a particular class of iterated functions systems,
where it is required that the geometrical objects defined by the IFS
are compact sets which are the closure of their interior,
in such a way that tilings can be defined.
See Proposition~\ref{prop:rauzy} for an example of such a definition for the Tribonacci fractal.

\paragraph{Dual (or ``generalized'') substitutions}
These substitutions, introduced in~\cite{Arn.Ito.01}
can be seen as a discrete version of self-affine substitutions.
Instead of defining fractal tilings in a purely geometrical way (like with IFS),
these substitutions act on unions of faces of unit cubes located at integer coordinates.
We define the associated fractal sets and tilings by iterating the dual substitution
and by taking a Hausdorff limit of the (renormalized) unions of unit cube faces.
The fact that we deal with unit cube faces allows us to
exploit some fine combinatorial and topological properties of the resulting patterns.
This provides some powerful tools in the study of substitution dynamics and Rauzy fractal topology.
Dual substitutions are usually denoted by $\mathbf E_1^*(\sigma)$,
where $\sigma$ is a one-dimensional symbolic substitution,
See~\cite{CANT,ST09} for many references and results,
and Definition~\ref{def:eos} for the particular example studied in this article.

\paragraph{Local substitution rules}
This notion has been used to tackle combinatorial questions about
substitution dynamics~\cite{IO93,ABI02,Arn.Berth.Sieg.04,BBJS15}
and have also been studied in a more general context~\cite{Fer07,JK14}.
Their aim is to get a ``more combinatorial'' version of dual substitutions.
Instead of computing explicitely the coordinates of the image of each unit cube face
(like we do for dual substitutions),
we give some \emph{local rules} (or \emph{concatenation} rules)
for ``gluing together'' the images of two adjacent faces.
The map defined in Figure~\ref{fig:omega_two},~p.~\pageref{fig:omega_two}
is an example of such a substitution
(except that it is defined over topological tiles and not unit cube faces).

\paragraph{Topological substitutions}
Introduced in \cite{Beda.Hil.10},
topological substitutions do not make any use of geometry:
the tiles are topological disks (with no Euclidean shape),
the boundaries of which have a simplicial structure (made of vertices and edges).
It is a notion less geometrically rigid than the previous ones.
They act on CW-complexes,
and the ``gluing rules'' are more abstract and combinatorial than local substitution rules. A topological substitution generates a CW-complex homeomorphic to the plane. If this complex can be geometrized as a tiling of the plane, we say that the tiling is a topological substitutive tiling. Topological substitutions allowed for instance to prove that there is no substitutive primitive tiling of the hyperbolic plane,
even though an explicit example of a non-primitive topological substitution
which generates a tiling of the hyperbolic plane is given in \cite{Beda.Hil.10}.

In order to to distinguish this notion of substitution used the present article
from the other combinatorial notions discussed in this introduction,
we use the term \emph{topological substitution} instead of \emph{combinatorial substitution} 

The examples of topological substitutions given in the present article
(Figure~\ref{tribo-noms} and Figure~\ref{fig:exother}) are interesting
because they provide new examples of topological substitutive tilings,
which can be realized as (substitutive) tilings of the plane.

\paragraph{Other related notions}
There is another notion, elaborated by Fernique and Ollinger~\cite{FO10}
(and developped in details in the specific case of Tribonacci),
which lies between local substitution rules and topological substitutions.
For these so-called \emph{combinatorial substitutions}, the Euclidean shape of the tiles is specified,
and the matching rules are stated in terms of colors associated with
some subintervals on the boundaries of the tiles and their images.
We stress that, in that case, the Euclidean geometry is used both to give the shape of the tiles
and to specify that two tiles with same shape differ with a translation of the plane.

Purely combinatorial notions of substitutions have already been defined. For instance, 
Priebe-Frank~\cite{Fra03} introduced a very natural notion of (labelled) graph substitutions.
In the case of a substitutive tiling, this graph substitution has to be understood as a substitution on the dual graph to the tiling.
The main issue with this formalism is that there is no a priori control on the planarity of the graph obtained by iteration of the substitution, and thus in general the limit graph obtained by iteration can not be the dual graph to any tiling of the plane. Topological substitutions of \cite{Beda.Hil.10} remedy this problem.

Topological substitutions have also some worth to be met cousins: the so-called {\em subdivision rules}, introduced by Cannon, Floyd and Parry in \cite{Can.Flo.Par.01}. The natural context where these subdivision rules have hatched is the one of conformal geometry: on one hand, they can be seen as topological models for postcritically finite rational map of the Riemann sphere \cite{Can.Flo.Par.07}, on the other hand, they are likely useful to prove Cannon's conjecture for hyperbolic groups whose Gromov boundary is the 2-dimensional sphere as suggested by the results of \cite{Can.Swe.98}.
Nevertheless, subdivision rules can be also used to produce conformal\footnote{Here {\em conformal} means that the underlying geometry is the conformal geometry and not the Euclidean one. That is to say that the group $\Gamma$ defining the tiling (see Definition~\ref{def:tiling}) is not a subgroup of isometries of $\mathbb R^2$, but a subgroup of biholomorphisms of $\mathbb C$.
} substitutive tilings of the plane, see \cite{Bow.Ste.97,Ram.13}.
Even if, by iterating both a system of subdivision rules or a substitution, one get a 2-complex homeomorphic to the plane, these processes do differ in their nature: morally, in the case of subdivision rules the 2-complex is obtained as an inverse limit whereas in the case of a substitution it is obtained as a direct limit. It is not clear at all to the authors when, given a 2-complex obtained by one of two processes, one can also recover it using the other process.

\subsection{Organization of the article}

In Section~\ref{sectiling} we quickly review some usual facts about tilings. 
In Section~\ref{sect:tritop} we recall the general definition of a topological substitution
and we define the Tribonacci topological substitution we are interested in.
In Section~\ref{sect:frac} we recall the definition of  the Tribonacci dual substitution
and its associated IFS Rauzy fractal.
The link between the IFS and the topological substitution is finally studied in Section~\ref{sec:link}.
In Section~\ref{conclu} we describe how our results can be extended to some other Rauzy fractal tilings,
and we explain that finding a suitable topological substitution
has some dynamical implications for the underlying one-dimensional substitution.

\paragraph{Acknowledgements}
We thank the referee for a very careful reading of the paper and several useful suggestions.
This work was supported by the ANR through projects LAM ANR-10-JCJC-0110, QUASICOOL ANR-12-JS02-0011 and FAN ANR-12-IS01-0002.

\section{Tilings}\label{sectiling}
\subsection{Basic definitions}
In this section we recall standard notions on tiling in $\mathbb{R}^2$.
For the references about this material we refer the reader to \cite{Robin.04,Sol97,Beda.Hil.10,GS87}.
We denote by $\Gamma$ a subgroup of transformations of $\mathbb{R}^2$:
here, $\Gamma$ will be the group of translations of $\mathbb{R}^2$. We keep $\Gamma$ in the notation, just to have in mind that some classical tilings need rotations of tiles.

A {\bf tile} is a compact subset of $\mathbb R^2$ which is the closure of its interior
(in most of the basic examples, a tile is homeomorphic to a closed ball).
We denote by $\partial{T}$ the boundary of a tile,
i.e. $\partial{T}=T\setminus \mathring{T}$.
Let $\mathcal{A}$ be a finite set of {\bf labels}.
A {\bf labelled tile} is a pair $(T,a)$ where
$T$ is a tile and $a$ an element of $\mathcal{A}$.
Two labelled tiles $(T,a)$ and $(T',a')$ are {\bf equivalent} if
$a=a'$ and there exists a translation $g\in\Gamma$
such that $T'=gT$.
An equivalence class of labelled tiles is called a {\bf prototile}:
the class of $(T,a)$ is denoted by $[T,a]$, or simply by $[T]$ when the context is sufficiently clear. We will say that $(T,a)$ belongs to the prototile $[T,a]$. In some cases, one does not need the labelling
to distinguish different prototiles: for example if we consider a
family of prototiles such that the tiles in two different prototiles
are not isometric.

\begin{definition}\label{def:tiling}
A tiling $\mathbf{X}=(\mathbb{R}^2,\Gamma,\mathcal{P},\textsf{T})$ of the plane
modeled on a set of prototiles $\mathcal{P}$, is a set $\textsf{T}$ of tiles, each belonging to a prototile in
$\mathcal{P}$, such that:
\begin{itemize}
\item $\mathbb{R}^2=\displaystyle\bigcup_{T\in\textsf{T}}T,$
\item two distinct tiles of $\textsf{T}$ have disjoint interiors.
\end{itemize}
\end{definition}

A connected finite union of (labelled) tiles is called a (labelled) {\bf patch}.
Two finite patches are
{\bf equivalent} if they have the same number $k$ of tiles and these
tiles can be indexed $T_1,\dots,T_k$ and $T'_1,\dots,T'_k$,
such that there exists $g\in
\Gamma$ with $T'_i=g T_i$ for every $i\in\{1,\dots,k\}$.
Two labelled patches are {equivalent} if moreover $T_i,T'_i$
have same labelling for all $i\in \{1,\dots,k\}$.
An equivalence class of patches is called a {\bf protopatch} and denoted $[P]$ if $P$ is one of these patches.

The {\bf support} of a patch $P$, denoted by $\supp(P)$,
is the subset of $\mathbb{R}^2$ which
consists of points belonging to a tile of $P$.
A {\bf subpatch} of a patch $P$ is a patch which is a subset of the patch $P$.

Let $\mathbf{X}=(\mathbb{R}^2,\Gamma,\mathcal{P},\textsf{T})$ be a tiling, and
let $A$ be a subset of $\mathbb{R}^2$.
A patch $P$ {\bf occurs in $A$} if there exists
$g\in\Gamma$ such that for any tile $ T\in P$,
$gT$ is a tile of $\textsf{T}$ which is contained in $A$:
\[
gT \in \textsf{T} \ \text{ and } \ \supp(gT) \subseteq A.
\]
We note that any patch in the protopatch $[P]$ defined by $P$ occurs in $A$.
We say that the protopatch $[P]$ {\bf occurs} in $A$.
The {\bf language} of $\mathbf{X}$, denoted $\Lambda_\mathbf{X}$, is the set of protopatches of $\mathbf{X}$.

When all tiles of $\mathcal{P}$ are euclidean polygons, the
tiling is called a {\bf polygonal tiling}.

\subsection{Delone set defined by a tiling of the plane}
A Delone set in $\mathbb R^2$ is a set $\mathcal D$ of points such that there exists $r,R>0$ such that
every euclidean ball of radius $r$ contains at most one point of $\mathcal D$ and every euclidean ball of radius $R$ contains at least one point of $\mathcal D$. 

When a tiling of the plane $\mathbf{X}=(\mathbb{R}^2,\Gamma,\mathcal{P},\textsf{T})$ is modeled on a finite set of prototiles $\mathcal P$,
there is a standard way (among others) to derive a Delone set $\mathcal D$ from a tiling of the plane $\mathbf{X}=(\mathbb{R}^2,\Gamma,\mathcal{P},\textsf{T})$. We first choose a point in the interior of each prototile. This choice gives us a point $x_T$ in each tile $T$ of $\mathbf X$: $\mathcal D=\{x_t, T\in \textsf{T}\}$ is a Delone set.

%%%%%%%%%%%%%%%%%%%%%%%%%
%%%%%%%%%%%%%%%%%%%%%%%%%

\subsection{The $2$-complex defined by a tiling of the plane}
\label{section:complex}
Let $X$ be a $2$-dimensional CW-complex
(see, for instance, \cite{Hat.02} for basic facts about CW-complexes).
The 0-cells will be called vertices, the 1-cells edges
and the 2-cells faces.
The subcomplex
of $X$ which consists of cells of dimension at most $k\in\{0,1,2\}$ is denoted by $X^k$
(in particular $X^2=X$).
We denote by $|X^k|$ the number of $k$-cells in $X^k$. 

Let $\mathbf{X}=(\mathbb{R}^2,\Gamma,\mathcal{P},\textsf{T})$ be a tiling of the plane.
We suppose that the tiles are homeomorphic a a closed disc $\mathbb{D}^2$.
This tiling defines naturally a $2$-complex
$X$ in the following way.
The set $X^0$ of vertices of $X$ is the set of points in $\mathbb{R}^2$ which belong to
(at least) three tiles of $\mathcal{T}$. Each connected component of the set
$\bigcup_{T\in\textsf{T}}\partial{T}\setminus X^0$ is an open arc.
Any closed edge of $X$ is the closure of one of these arcs. 

Such an edge $e$ is glued to the endpoints $x,y\in X^0$ of the arc.
The set of faces of $X$ is the set of tiles of $\mathbf{X}$.
We remark that the boundary of a tile is a subcomplex of $X^1$ homeomorphic to the circle $\mathbb{S}^1$: this gives the gluing of the corresponding face on the 1-skeleton. 

Let $Y$ be a $2$-dimensional CW-complex homeomorphic to the plane $\mathbb{R}^2$.
A polygonal tiling $\mathbf{X}$ is a {\bf geometric realization} of $Y$ if the 2-complex
$X$ defined by $\mathbf{X}$ is isomorphic (as CW-complex) to $Y$.
In that case, each face of the complex $Y$ can be naturally labelled by the corresponding prototile of the tiling $\mathbf{X}$.
%%%%%%%%%%%%%%%%%%%%%
\section{The Tribonacci topological substitution}\label{sect:tritop}
%%%%%%%%%%%%%%%%%%%%%

Before giving the definition of the Tribonacci topological substitution in section~\ref{sect:deftoptribo}, we first recall some facts about (2-dimensional) topological substitutions in section~\ref{sect:toposub}.
These two sections can be read in parallel: along section~\ref{sect:toposub}, we illustrate the notions with examples referring to section~\ref{sect:deftoptribo}.

%%%%%%%%%%%%%%%%%%%%%
\subsection{Topological substitutions}
\label{sect:toposub}
%%%%%%%%%%%%%%%%%%%%%

The mathematical content of this section is essentially contained in\cite{Beda.Hil.10}: we include it here for completeness.
The vocabulary we will use in the present setting is often common to the one of tilings: 
the context is in general sufficient to prevent any ambiguity.

%%%%%%%%%%%%%%%%%%%%%
\subsubsection{General definition}\label{subset:top}
%%%%%%%%%%%%%%%%%%%%%

A {\bf topological $k$-gon} ($k\geq 3)$ is a $2$-dimensional CW-complex made of one face, 
$k$ edges and $k$ vertices, which is homeomorphic to a closed disc $\mathbb{D}^2$, 
and such that the $1$-skeleton is the boundary $\mathbb{S}^1$ of the closed disc. 
A {\bf topological polygon} is a topological $k$-gon for some $k\geq 3$.

We consider a finite set $\T=\{T_1,\dots,T_d\}$ of topological polygons.
The elements of $\mathcal{T}$ are called {\bf prototiles}, and $\T$ is called the {\bf set of prototiles}.
If $T_i$ is a $n_i$-gon,
we denote by $E_i=\{e_{1,i},\dots,e_{n_i,i}\}$ the set of edges of $T_i$.
%%%%%%% 
In practice, we will need later to consider these $e_{n_k,i}$ as oriented edges: we first fix an orientation on the boundary of $T_i$, and equip the $e_{n_k,i}$ with the induced orientation. 
We set $E_i^{-1}=\{e_{1,i}^{-1},\dots,e_{n_i,i}^{-1}\}$ and $E_i^{\pm}=E_i\cup E_i^{-1}$,
where $e^{-1}$ denotes the edge $e$ equipped with the reverse orientation.
%%%%%%%

A {\bf patch} $P$ {\bf modeled} on $\T$ is a $2$-dimensional 
CW-complex homeomorphic to the closed disc 
$\mathbb{D}^2$ such that 
for each closed face $f$ of $P$, there exists a prototile $T_i\in\T$ and a homeomorphism 
$\tau_{f}:f\rightarrow T_i$ which respects the cellular structure. 
Then $T_i=\tau_f(f)$ is called the {\bf type} of the face $f$, and denoted by $\type(f)$.
The type of an edge $e$ of $T_i$, denoted by $\type(e)$, is $\tau(e)$.
An edge $e$ of $P$ is called a {\bf boundary edge} if it is contained
in the boundary $\mathbb{S}^1$ of the disc $\mathbb{D}^2\cong P$. Such a boundary
edge is contained in exactly one closed face of $P$.
An edge $e$ of $P$ which is not a boundary edge
is called an {\bf interior edge}.
An interior edge is contained in exactly
two closed faces of $P$.
In the following definition, and for the rest of this article, 
the symbol $\sqcup$ stands for the disjoint union.

\begin{definition}
A {\bf topological pre-substitution} is a triplet 
$(\T,\sigma(\T),\sigma)$ where:
\begin{enumerate}
	\item $\T=\{T_1,\dots,T_d\}$ is a set of prototiles, 
	\item $\sigma(\T)=\{\sigma(T_1),\dots,\sigma(T_d)\}$ 
	is a set of patches modeled on $\T$,
	\item 
	$\displaystyle \sigma: \bigsqcup_{i\in\{1,\dots,d\}} T_i 
	\rightarrow \bigsqcup_{i\in\{1,\dots,d\}} \sigma(T_i)$ 
	is a homeomorphism,
	which restricts to homeomorphisms 
	$T_i\rightarrow\sigma(T_i)$,
	such that the image of a vertex of $T_i$ is a vertex of 
        the boundary of $\sigma(T_i)$.
\end{enumerate}
\end{definition}

\begin{example}
In Figure \ref{tribo-noms}, we show one example of a pre-topological substitution defined on $3$ prototiles. This is the Tribonacci topological pre-substitution.
\end{example}

\paragraph{Compatible topological pre-substitution}

Let $\mathcal{T}=\{T_1,\dots, T_d\}$ be the set
of prototiles of $\sigma$, and let 
$E_i^{\pm}=E_i\cup E_i^{-1}$ be the set of oriented edges of $T_i$ ($i\in\{1,\dots,d\}$). We denote by $E^{\pm}$ the set of all oriented edges: $E^{\pm}=\sqcup_{i}E_i^{\pm}$ .

A pair $(e,e')\in E^\pm\times E^\pm$ is {\bf balanced} if
$\sigma(e)$ and $\sigma(e')$ have the same length
(= the number of edges in the edge path).
The {\bf flip} is the involution of $E^\pm\times E^\pm$
defined by $(e,e')\mapsto (e',e)$, and the {\bf reversion} is the involution of $E^\pm\times E^\pm$ defined by $(e,e')\mapsto (e^{-1},e'^{-1})$.
The quotient of $E^\pm\times E^\pm$ obtained by identifying 
a pair and its image by the flip and also a pair and its image by the reversion is denoted by $E_2$.
We denote by $[e,e']$ the image of a pair
$(e,e')\in E^\pm\times E^\pm$ in $E_2$.
Since the flip and the reversion preserve balanced pairs,
the notion of ``being balance'' is well defined
for elements of $E_2$.
The subset of $E_2$ which consists of balanced
elements is called the {\bf set of balanced pairs}, and
denoted by $\mathcal{B}$.
Let $[e,e']\in \mathcal{B}$ a balanced pair.
In other words, $\sigma(e)$ and $\sigma(e')$
are paths of edges which have same length say $p\geq 1$: 
$\sigma(e)=e_1\dots e_p$, $\sigma(e')=e'_1\dots e'_p$. Let $\varepsilon_i=\type(e_i)$ and let $\varepsilon'_i=\type(e'_i)$: $\varepsilon_i, \varepsilon'_i\in E^\pm$ for $i=1\dots p$. 
Then the $[\varepsilon_i,\varepsilon'_i]$ are called the {\bf descendants} of $[e,e']$.

\begin{example}
For the Tribonacci topological pre-substitution, consider for example the edges $e=B_{45}, e'=C_{76}$. By Figure \ref{tribo-noms}, we have 
$\sigma(e)=C_{34}C_{45}C_{56}, \sigma(e')=C_{10}C_{09}C_{98}$. Thus $[e,e']$ is a balanced pair. The descendants of this pair are $[C_{34},C_{10}], [C_{45},C_{09}], [C_{56},C_{98}]$.
\end{example}

Now, we consider a patch $P$ modeled on $\mathcal{T}$. 
An interior edge $e$ of $P$ defines an 
element $[\varepsilon,\varepsilon']$ of $E_2$. 
Indeed, let $f$ and $f'$ be the two faces adjacent to $e$ in $P$.
We denote by $\varepsilon=\tau_f(e)$ the edge of $\type(f)$
corresponding to $e$, and by $\varepsilon'=\tau_{f'}(e)$
the edge of $\type(f')$ corresponding to $e$.
The edge $e$ is said to be {\bf balanced} 
if $[\varepsilon,\varepsilon']$ is balanced.

We define, by induction on $p\in\mathbb{N}$, 
the notion of a {\bf $p$-compatible} topological pre-substitution $\sigma$.
To any $p$-compatible topological pre-substitution $\sigma$
we associate a new pre-substitution which will be denoted by
$\sigma^p$.

\begin{definition}\label{defpcomp}
\begin{enumerate}[label =\alph*)]
\item Any pre-substitution $(\T,\sigma(\T),\sigma)$ 
is {\bf $1$-compatible}. We set $\sigma^1=\sigma$.

\item A pre-substitution $(\T,\sigma(\T),\sigma)$ is said 
to be {\bf $p$-compatible} ($p\geq 2$) if:
\begin{enumerate}[label =\arabic*.]
\item $(\T,\sigma(\T),\sigma)$ is $(p-1)$-compatible 
\item for all $i\in\{1,\dots,d\}$, 
every interior edge $e$ of $\sigma^{p-1}(T_i)$
is balanced.
\end{enumerate}

\item We suppose now that $(\T,\sigma(\T),\sigma)$ is a 
$p$-compatible pre-substitution ($p\geq 2$). 
Then we define $\sigma^p(T_i)$ ($i\in\{1,\dots,d\}$) as
the patch obtained in the following way: 

We consider the collection of patches $\sigma(\type(f))$ for each
face $f$ of $\sigma^{p-1}(T_i)$. 
Then, if $f$ and $f'$ are two faces of $\sigma^{p-1}(T_i)$ adjacent 
along some edge $e$, we glue, edge to edge, 
$\sigma(\type(f))$ and $\sigma(\type(f'))$
along $\sigma(\tau_{f}(e))$ and $\sigma(\tau_{f'}(e))$. 
This is possible since the $p$-compatibility of $\sigma$
ensures that the edge $e$ is balanced.
The resulting patch $\sigma^p(T_i)$ is defined by:

$$\sigma^p(T_i)=\left.\left(\bigsqcup_{f \text{ face of } \sigma^{p-1}(T_i)}
\sigma(\type(f)) \right) \right/ \sim$$
where $\sim$ denotes the gluing.
We define $\sigma^p(\T)$ to be the set 
$\{\sigma^p(T_1),\dots,\sigma^p(T_d)\}$.
\end{enumerate}
\end{definition}

\begin{remark}
Definition~\ref{defpcomp} is recursive. Indeed, conditions $b)$ and $c)$ should be denoted $b_p)$ and $c_p)$ since they do depend on $p$. Then the definition should be read in the following order: $a)$, $b_2)$, $c_2)$, $\dots$, $b_p)$, $c_p)$, $b_{p+1})$, $c_{p+1})$, $\dots$ 
\end{remark}

The map $\sigma$ induces a natural map on the faces of each $\sigma^{p-1}(T_i)$ which factorizes to a map
$\sigma_{i,p}:\sigma^{p-1}(T_i) \rightarrow \sigma^{p}(T_i)$
thanks to the $p$-compatibility hypothesis:

\begin{equation*}
\begin{CD}
  \displaystyle\bigsqcup_{f \text{ face of } \sigma^{p-1}(T_i)}
\type(f)  @> \sigma >> \displaystyle\bigsqcup_{f \text{ face of } \sigma^{p-1}(T_i)}
\sigma(\type(f)) \\
@V{\sim}VV        @VV{\sim}V\\
 \sigma^{p-1}(T_i) @>>\sigma_{i,p}> \sigma^{p}(T_i)
\end{CD}
\end{equation*}

We note that $\sigma_{i,p}$ is a homeomorphism which
sends vertices to vertices.
Then we define the map
$$\sigma^{p}_i: T_i \rightarrow \sigma^{p}(T_i)$$
as the composition:
$\sigma^{p}_i= \sigma_{i,p}\circ\sigma_i^{p-1}$.
This is an homemorphism which
sends vertices to vertices.
Then $\sigma^p$ is naturally defined
such that the restriction of $\sigma^p$
on $T_i$ is $\sigma_i^p$.
We remark that $\sigma^1=\sigma$.
We have thus obtained a topological pre-substitution 
$(\T,\sigma^p(\T),{\sigma}^p)$. A topological pre-substitution is {\bf compatible} if it is $p$-compatible for every integer $p$.

\paragraph{Checking compatibility}

In this subsection we give an algorithm which decides
whether a pre-substitution is compatible. 

Suppose that $\sigma$ is $p$-compatible.
We define $W_p$ as the set of elements 
$[\varepsilon,\varepsilon']\in E_2$ such that there
exists $i\in\{1,\dots, d\}$, as well as two 
faces $f$ and $f'$ in $\sigma^p(T_i)$ glued along 
an edge $e$, such that
$\tau_f(e)=\varepsilon$ and $\tau_{f'}(e)=\varepsilon'$. 
The topological pre-substitution $\sigma$ is 
$(p+1)$-compatible if and only if  $W_p$ is contained in the 
set of balanced pairs $\mathcal{B}$:
$W_p\subseteq \mathcal{B}$. Then:

\begin{itemize}
\item either $W_p\nsubseteq \mathcal{B}$: the
algorithm stops, telling us that the substitution is
not compatible,
\item or $W_p\subseteq \mathcal{B}$: then
we define $V_p=V_{p-1}\cup W_p$.
\end{itemize}
By convention we settle $V_0=\emptyset$.

Suppose that $\sigma$ is compatible.
The sequence $(V_p)_{p\in\mathbb{N}}$ is an increasing
sequence (for the inclusion) of subsets of
the finite set $E_2$. Hence
there exists some $p_0\in\mathbb{N}$ such that
$V_{p_0+1}=V_{p_0}$ (and thus 
$V_{p}=V_{p_0}$ for all $p\geq p_0$).
The algorithm stops at step $p_0+1$
(where $p_0$ is the smallest integer
such that $V_{p_0+1}=V_{p_0}$),
telling that $\sigma$ is compatible.

The {\bf heredity graph of edges} of $\sigma$,
denoted $\mathcal{E}(\sigma)$, is defined in the following 
way. The set of vertices of $\mathcal{E}(\sigma)$
is $V_{p_0}$. There is an oriented edge from
vertex $[e,e']$ to vertex $[\epsilon,\epsilon']$
if $[\epsilon,\epsilon']$ is a descendant of $[e,e']$. 

\begin{example}
For the Tribonacci topological pre-substitution, consider one again the edges $e=B_{45}, e'=C_{76}$. We have shown in a previous example that $\sigma(e)=C_{34}C_{45}C_{56}, \sigma(e')=C_{10}C_{09}C_{98}$. Thus $[B_{45}, C_{76}]$ is a vertex of the heredity graph of edges. There are three edges which start from this vertex and go to the vertices defined by the balanced pairs -- see Figure~\ref{tribo-aretes} and the proof of Lemma~\ref{lemm:stable_tribo} for more details.
\end{example}

\paragraph{Core of a topological pre-substitution}

Let $P$ be a patch modeled on $\mathcal{T}=\{T_1,\dots,T_d\}$.
The {\bf thick boundary} $\B(P)$ of $P$ 
is the closed sub-complex of $P$ consisting of 
the closed faces which contain at least one vertex 
of the boundary $\partial P$ of $P$.
The {\bf core} $\core(P)$ of $P$ is 
the closure in $P$ of the complement
of $\B(P)$: in particular, $\core(P)$ is a closed subcomplex of $P$,
see Figure~\ref{fig:thick-core}.

\begin{figure}[!ht]
\centering
\includegraphics[width=5cm]{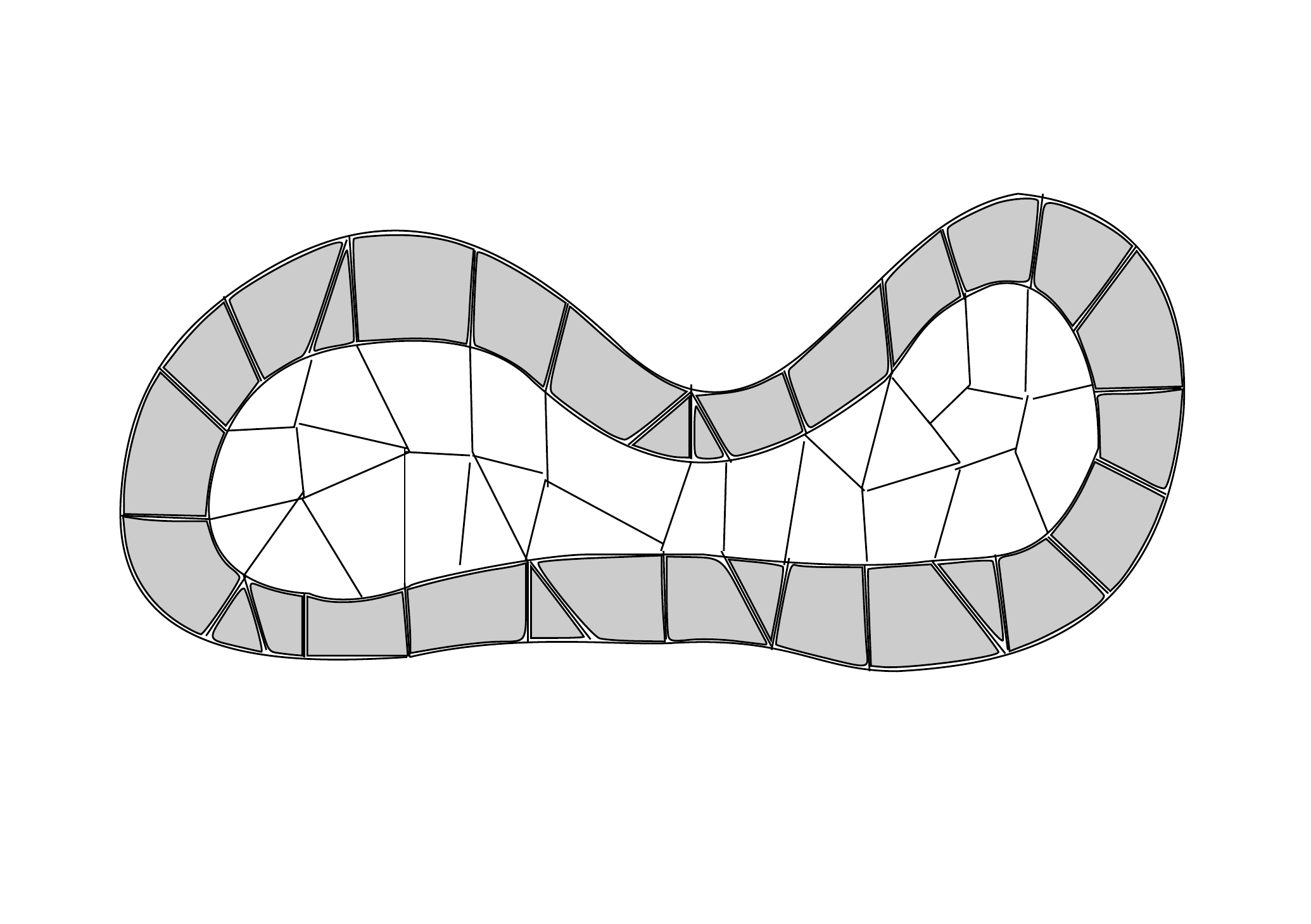}
\caption{The thick boundary is the grey subcomplex and the core is the white subcomplex.}
\label{fig:thick-core}
\end{figure}

A topological pre-substitution $(\T,\sigma(\T),\sigma)$ has the {\bf core property} if there exist $i\in\{1\dots d\}$ and $k\in\mathbb{N}$ such that the core 
of $\sigma^k(T_i)$ 
is non-empty.

\begin{example}
For the Tribonacci topological pre-substitution we show in Figure \ref{tribo-iter} that the cores of $\sigma(C), \sigma^2(C)$ are empty. But the core of $\sigma^3(C)$ is not empty.
\end{example}

\begin{definition}
A {\bf topological substitution} is a pre-substitution 
which is compatible and has the core property. 
\end{definition}

%%%%%%%%%%%%%%%%%%%%%
\subsubsection{Topological plane obtained by inflation}\label{sec:inflation}
%%%%%%%%%%%%%%%%%%%%%

Consider a tile $T\in\mathcal{T}$ such that the core of 
$\sigma(T)$ contains a face of type $T$.
Then, we can identify the tile  $T$ with a subcomplex of 
the core of $\sigma(T)$. 
By induction, $\sigma^k(T)$ is thus identified with a subcomplex 
of $\sigma^{k+1}(T)$ ($k\in\mathbb{N}$). 
We define $\sigma^\infty(T)$ as the increasing union:

$$\sigma^\infty(T)=\bigcup_{k=0}^\infty\sigma^k(T).$$
By construction, the complex $\sigma^\infty(T)$ 
is homeomorphic to $\mathbb{R}^2$.
(Indeed, denoting $\sigma^k(T)$ by $D_k$, one observes that 
$\sigma^\infty(T)$ is an increasing union of closed discs $D_k$
with $D_k$ contained in the interior $\text{Int}\; D_{k+1}$ of $D_{k+1}$, 
and with $D_{k+1}\smallsetminus \text{Int}\; D_{k}$ homemorphic to the annulus $S^1\times I$.
This allows to build an homeomorphism between $\sigma^\infty(T)$ and $\mathbb R^2$
-- see for instance \cite[exercise 3 p. 207]{Hir.94}.)
We say that such a complex is obtained {\bf by inflation from $\sigma$}. 
Moreover this complex can be labelled by the types of the topological polygons.
We notice that $\sigma$ induces an homeomorphism of $\sigma^\infty(T)$.

We denote by $\mathcal{P}_\sigma$ the {\bf set of patches in the complex $\sigma^\infty(T)$}.
We notice that $\sigma$ naturally defines a map $\mathcal{P}_\sigma\rightarrow \mathcal{P}_\sigma$,
that is still denoted by $\sigma$.
To be more precise, given a patch $P\in\mathcal{P}_\sigma$, there is some $k\in\mathbb{N}$ 
such that $P\subseteq \sigma^k(P)\subset \sigma^\infty(P)$, 
so that $\sigma(P) \subseteq \sigma^{k+1}(P)\subset \sigma^\infty(P)$: 
this patch $\sigma(P)\in\mathcal{P}_\sigma$ does not depend on the choice of $k$.

We denote by $\mathcal{T}_\sigma$ the {\bf set of tiles in the complex $\sigma^\infty(T)$}:
$\mathcal{T}_\sigma$ is a subset of $\mathcal{P}_\sigma$.
See Figure~\ref{fig:topotopo} for examples of such topological complexes generated by topological substitutions.

\begin{figure}[!ht]
\centering
\myvcenter{\includegraphics[height=5cm]{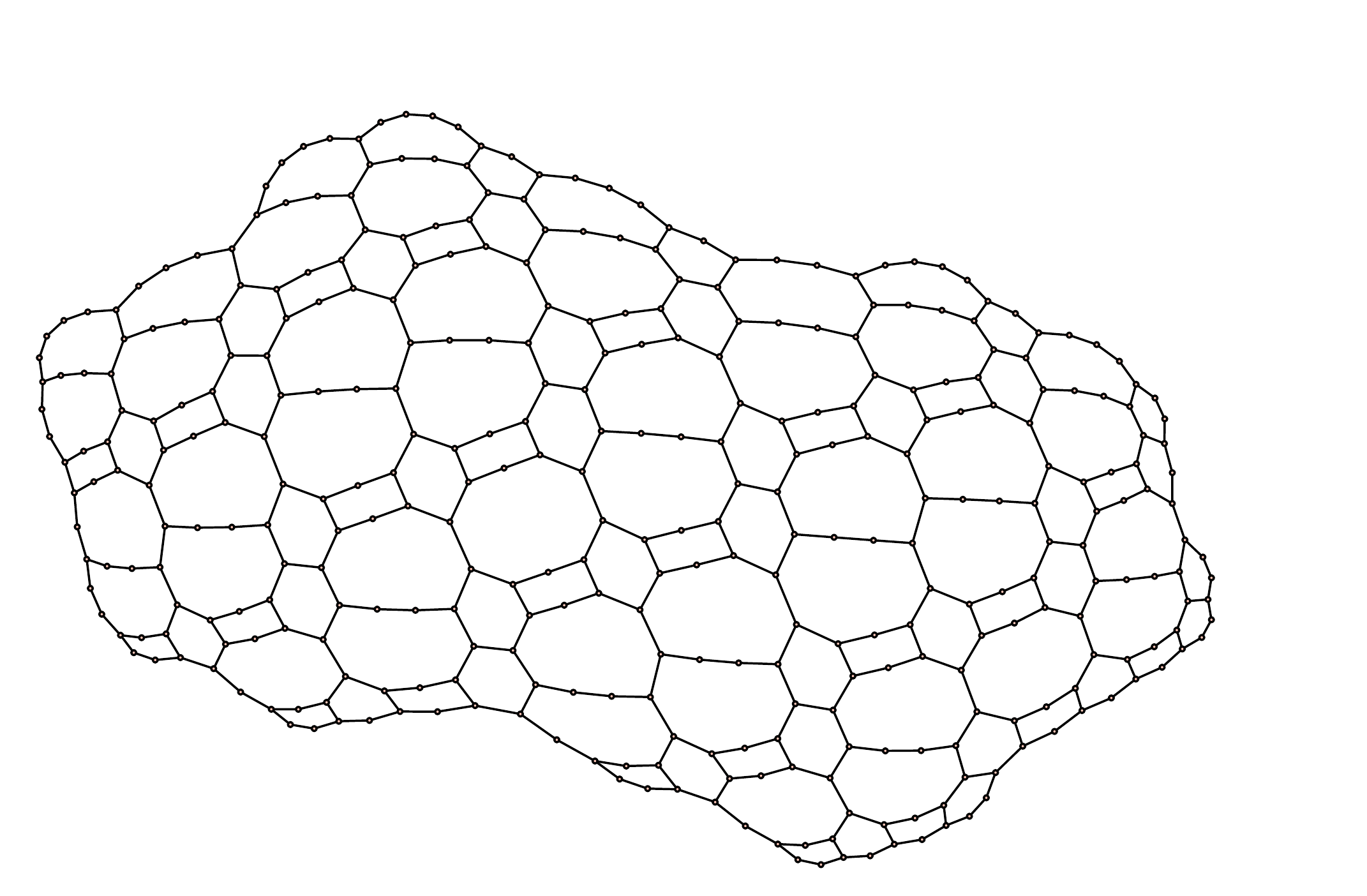}}
\hfil
\myvcenter{\includegraphics[height=5cm]{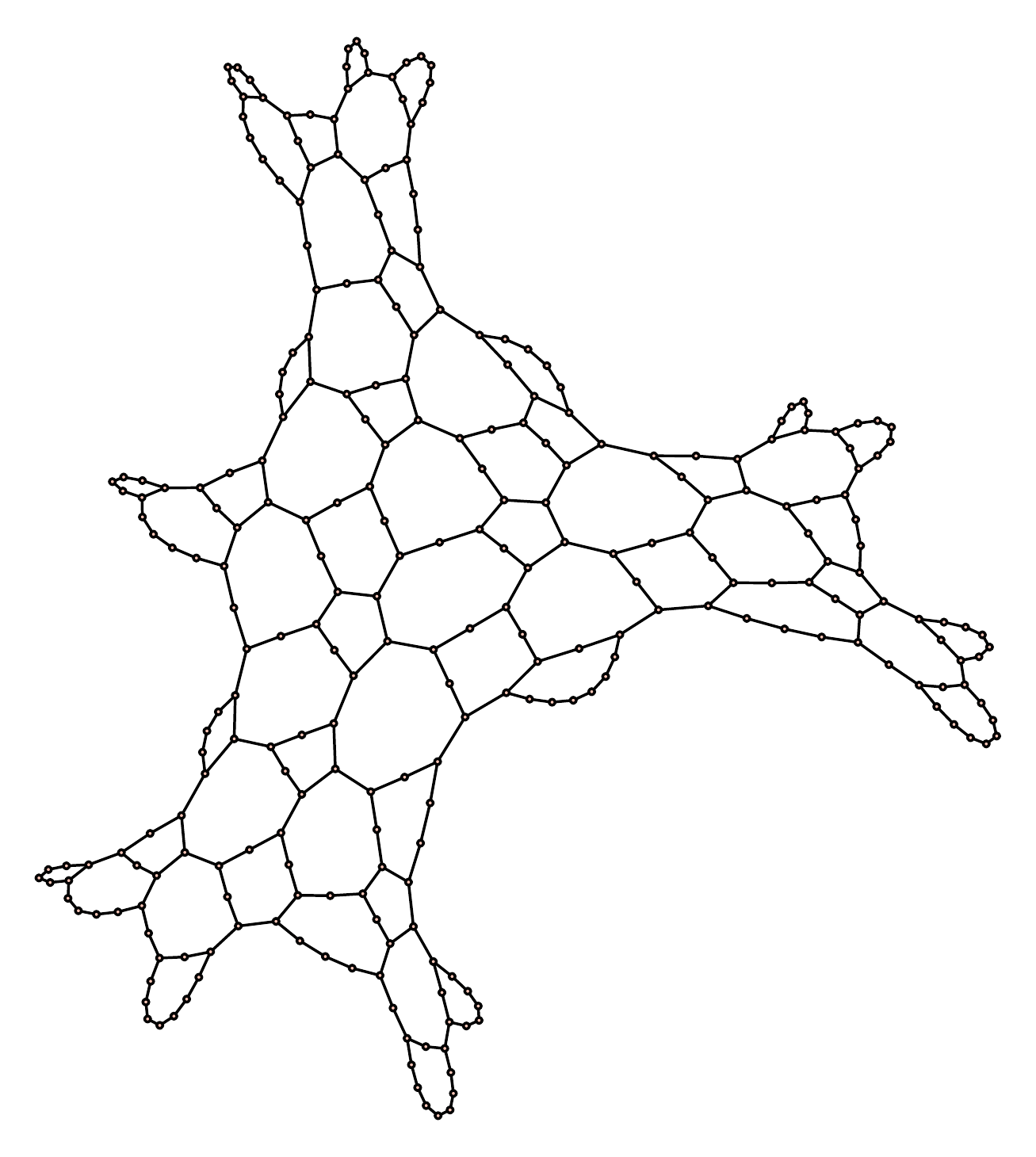}}
\caption{
The topological complexes associated with $\sigma^6(C)$ (left)
and $\tau^{10}(C)$ (right).
The definitions of $\sigma$ and $\tau$ are respectively given in
Figure~\ref{tribo-noms} (p.~\pageref{tribo-noms}) and Figure~\ref{fig:exother} (p.~\pageref{fig:exother}).
}
\label{fig:topotopo}
\end{figure}

%%%%%%%%%%%%%%%%%%%%%
\subsection{The Tribonacci topological substitution}
\label{sect:deftoptribo}
%%%%%%%%%%%%%%%%%%%%%

We first define a topological substitution $\sigma$.
Then we explain how to derive a tiling of $\mathbb{R}^2$
as a geometrical realization of the patches generated by~$\sigma$. The topological substitution is defined on Figure \ref{tribo-noms} and the first iterations on the polygon C are given in Figure \ref{tribo-iter}.

\subsubsection{Definition of the topological substitution}

We consider the Tribonacci topological pre-substitution $\sigma$ defined on Figure \ref{tribo-noms}.
There are three prototiles: two of them, $A$ and $B$, are hexagons, while the third one, $C$,
is a decagon.
The images of these prototiles (together with the labelling of the vertices and the images
of the vertices) are given on Figure \ref{tribo-noms}.
In practice, we will denote by $A_i$ the vertex $i$ of $A$, and by $A_{i(i+1)}$ the edge of $A$ joining
$A_i$ and $A_{i+1}$ (and so on for $B$ and $C$).

\begin{figure}[!ht]
\centering
\begin{tikzpicture}
[x={(-0.692820cm,-0.400000cm)}, y={(0.692820cm,-0.400000cm)}, z={(0.000000cm,0.800000cm)}]
%%% image C
\begin{scope}[shift={(-5,5,0)}]
\tuileC{C}{(2.5,-1.5,0)}{white}{0}{9}{8}{7}{6}{5}{4}{3}{2}{1}
\node at (0.5,-0.5,-0.75) {$\longmapsto$};
\tuileC{C}{(-1,2,0)}{white}{0}{9}{8}{7}{6}{5}{4}{3}{2}{1}
\tuileB{B}{(0,3,1)}{white}{0}{5}{4}{3}{2}{1}
\tuileA{A}{(-1,2,0)}{white}{0}{5}{4}{3}{2}{1}
\fill (1,1,0) circle (2.5pt) node[below] {\scriptsize $\sigma(2)$};
\fill (1,3,-1) circle (2.5pt) node[below] {\scriptsize $\sigma(1)$};
\fill (0,3,-1) circle (2.5pt) node[below] {\scriptsize $\sigma(0)$};
\fill (0,4,-1) circle (2.5pt) node[below] {\scriptsize $\sigma(9)$};
\fill (-1,4,-1) circle (2.5pt) node[below] {\scriptsize $\sigma(8)$};
\fill (-1,4,1) circle (2.5pt) node[above] {\scriptsize $\sigma(7)$};
\fill (-1,2,2) circle (2.5pt) node[above] {\scriptsize $\sigma(6)$};
\fill (0,2,2) circle (2.5pt) node[above] {\scriptsize $\sigma(5)$};
\fill (0,1,2) circle (2.5pt) node[above] {\scriptsize $\sigma(4)$};
\fill (1,1,2) circle (2.5pt) node[above] {\scriptsize $\sigma(3)$};
\begin{scope}[shift={(2.5,-2.5,0)}]
\fill (1,1,0) circle (2.5pt);
\fill (1,2,0) circle (2.5pt);
\fill (1,2,-1) circle (2.5pt);
\fill (1,3,-1) circle (2.5pt);
\fill (0,3,-1) circle (2.5pt);
\fill (0,3,1) circle (2.5pt);
\fill (0,2,1) circle (2.5pt);
\fill (0,2,2) circle (2.5pt);
\fill (0,1,2) circle (2.5pt);
\fill (1,1,2) circle (2.5pt);
\end{scope}
\end{scope}

\begin{scope}[shift={(0,0,-3)}]
\tuileB{B}{(2,1,0.25)}{white}{0}{5}{4}{3}{2}{1}
\node at (0.5,-0.5,-0.75) {$\longmapsto$};
\tuileC{C}{(0,1,0)}{white}{0}{9}{8}{7}{6}{5}{4}{3}{2}{1}
\fill (1,1,0) circle (2.5pt) node[below] {\scriptsize $\sigma(5)$};
\fill (1,3,-1) circle (2.5pt) node[below] {\scriptsize $\sigma(4)$};
\fill (0,3,-1) circle (2.5pt) node[below] {\scriptsize $\sigma(3)$};
\fill (0,3,1) circle (2.5pt) node[above] {\scriptsize $\sigma(2)$};
\fill (0,1,2) circle (2.5pt) node[above] {\scriptsize $\sigma(1)$};
\fill (1,1,2) circle (2.5pt) node[above] {\scriptsize $\sigma(0)$};
\begin{scope}[shift={(3,0,0)}]
\fill (1,1,0.25) circle (2.5pt);
\fill (0,1,0.25) circle (2.5pt);
\fill (0,1,2.25) circle (2.5pt);
\fill (0,0,2.25) circle (2.5pt);
\fill (1,0,2.25) circle (2.5pt);
\fill (1,0,0.25) circle (2.5pt);
\end{scope}
\end{scope}

\begin{scope}[shift={(0,0,2.5)}]
\tuileA{A}{(1,-1,0.5)}{white}{0}{5}{4}{3}{2}{1}
\node at (0.5,-0.5,-1) {$\longmapsto$};
\tuileB{B}{(-1,2,-1)}{white}{0}{5}{4}{3}{2}{1}
\fill (1,2,-1) circle (2.5pt) node[below] {\scriptsize $\sigma(4)$};
\fill (0,2,-1) circle (2.5pt) node[below] {\scriptsize $\sigma(3)$};
\fill (0,2,1) circle (2.5pt) node[above] {\scriptsize $\sigma(2)$};
\fill (0,1,1) circle (2.5pt) node[above] {\scriptsize $\sigma(1)$};
\fill (1,1,1) circle (2.5pt) node[above] {\scriptsize $\sigma(0)$};
\fill (1,1,-1) circle (2.5pt) node[below] {\scriptsize $\sigma(5)$};
\begin{scope}[shift={(2,-1,-0.5)}]
\fill (1,1,0) circle (2.5pt);
\fill (0,1,0) circle (2.5pt);
\fill (0,1,1) circle (2.5pt);
\fill (0,0,1) circle (2.5pt);
\fill (1,0,1) circle (2.5pt);
\fill (1,0,0) circle (2.5pt);
\end{scope}
\end{scope}
\end{tikzpicture}
\caption{The Tribonacci topological substitution.}
\label{tribo-noms}
\end{figure}
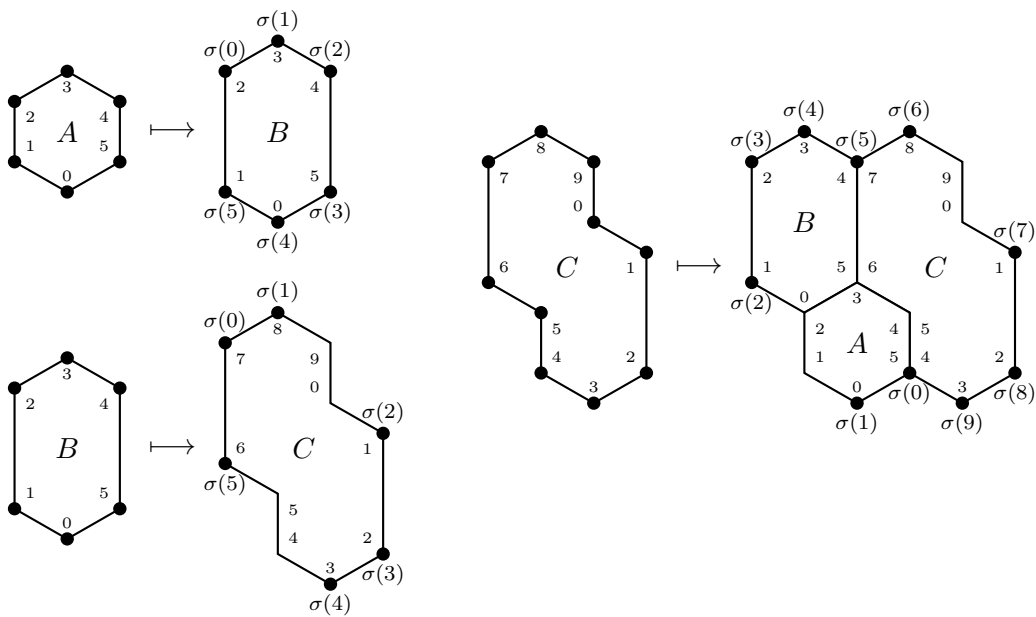

\begin{lemma}
\label{lemm:stable_tribo}
The topological pre-substitution $\sigma$ is a topological substitution.
\end{lemma}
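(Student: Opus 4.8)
The plan is to verify the two defining conditions of a topological substitution: that $\sigma$ is \emph{compatible} (that is, $p$-compatible for every integer $p$), and that it has the \emph{core property}. The core property is the easy half. Following the example preceding the statement, I would read off $\core(\sigma^3(C))$ explicitly from Figure~\ref{tribo-iter} and check that it contains at least one face, so that in particular the core of $\sigma^3(C)$ is non-empty; this already gives the core property. (One also checks that this core contains a face of type $C$, which is exactly what later legitimizes the inflation construction of $\sigma^\infty(C)$ in Section~\ref{sec:inflation}.)

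The real work is establishing compatibility, for which I would run the algorithm of the ``Checking compatibility'' paragraph. The first step is bookkeeping: for each oriented edge $e\in E^\pm$ of each prototile $A$, $B$, $C$, read off from Figure~\ref{tribo-noms} the edge path $\sigma(e)$, recording both its sequence of edge-types and, crucially, its length. With these lengths tabulated, balancedness of a pair $[e,e']$ is decided by a single integer comparison. Next I would determine the seed set $W_1$ of pairs attached to the interior edges of $\sigma(A)$, $\sigma(B)$, $\sigma(C)$. Since $\sigma(A)$ and $\sigma(B)$ each consist of a single face, they contribute no interior edge; hence $W_1$ records only the pairs along the interior edges where the faces $C$, $B$, $A$ of $\sigma(C)$ are glued, and one verifies that these seed pairs lie in $\mathcal{B}$.

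The inductive step is to show $W_p\subseteq\mathcal{B}$ for all $p$ by proving that the set of pairs reachable from the seeds is finite and consists entirely of balanced pairs. The key observation is that every interior edge of $\sigma^p(T_i)$ is of one of two kinds: either it lies inside a single block $\sigma(\type(f))$, in which case its pair already belongs to the seed set $W_1$; or it lies along a seam between two blocks $\sigma(\type(f))$ and $\sigma(\type(f'))$ coming from an edge $e$ adjacent to $f,f'$ in $\sigma^{p-1}(T_i)$, in which case its pair is a \emph{descendant} of a pair recorded in $W_{p-1}$. Consequently the whole computation is captured by the heredity graph of edges $\mathcal{E}(\sigma)$: I would build it by starting from the seed pairs and repeatedly adjoining descendants, as in Figure~\ref{tribo-aretes}. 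Because $E_2$ is finite, the increasing sequence $(V_p)$ stabilizes at some $V_{p_0}$; it then remains to confirm, vertex by vertex, that every pair occurring in $\mathcal{E}(\sigma)$ is balanced and that the vertex set is genuinely closed under taking descendants, so that no new, possibly unbalanced, pair can ever appear. Compatibility follows from the correctness of the algorithm.

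The main obstacle is precisely this finite-but-intricate verification. There is no deep conceptual difficulty, the argument being a terminating computation over the finite set $E_2$, but the bookkeeping must be exact: one must orient every edge consistently, read each edge path $\sigma(e)$ and its type sequence correctly from the figure, and make sure the descendant-closure of the seed set has really been computed in full before concluding that every reachable pair is balanced. Mis-orienting a single edge, or omitting one descendant, would invalidate the conclusion, so the delicate point is the completeness and accuracy of $\mathcal{E}(\sigma)$ rather than any individual step.
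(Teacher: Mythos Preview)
Your proposal is correct and follows essentially the same approach as the paper: verify compatibility by running the descendant-closure algorithm on the seed pairs coming from the interior edges of $\sigma(C)$ (since $\sigma(A)$ and $\sigma(B)$ are single tiles), building the heredity graph $\mathcal{E}(\sigma)$ of Figure~\ref{tribo-aretes} and checking every pair is balanced, then verify the core property by inspecting $\sigma^3(C)$ in Figure~\ref{tribo-iter}. The paper's proof is exactly this, only more terse.
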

\begin{proof}
Using the procedure described at the end of Subsection~\ref{subset:top} (in Paragraph ``Checking compatibility''),
we first check that $\sigma$ is compatible.
We start with the pair of edges that are glued in the images of $A$, $B$ and $C$. In fact, all these gluings occur in $\sigma(C)$: $[A_{45},C_{54}]$, $[A_{23},B_{05}]$, $[B_{45},C_{76}]$,  $[A_{43},C_{56}]$.

We focus now on $[B_{45},C_{76}]$. The image of the edge $B_{45}$ is the path of edges
$C_{34}C_{45}C_{56}$. The image of $C_{76}$ is $C_{10}C_{09}C_{98}$. Both have length $3$, and the gluing gives rise to the pairs of edges: 
$[C_{34},C_{10}]$, $[C_{45},C_{09}]$ and $[C_{56},C_{98}]$.

Carrying out the other pairs in  the same way, and iterating the process, we check that $\sigma$ is compatible. These computations are summed up in the heredity graph of edges, given in Figure~\ref{tribo-aretes}.

\begin{figure}[!ht]
\centering
$ \xymatrix{ 
% & &  \\
[A_{45},C_{54}] \ar[d] & [A_{23},B_{05}] \ar[r] & [B_{45},C_{76}] \ar@/_0.5pc/[dl] \ar@/^0.5pc/[dr] \ar@/^1pc/[drr] & & \\
[B_{01},B_{43}] \ar@/_0.5pc/[dr] & [C_{56},C_{98}] \ar[d] &  & [C_{45},C_{09}] \ar[d] & [C_{34},C_{10}]  \ar@/^1pc/[dddl] \\
& [C_{78},C_{32}] \ar[d] & [C_{23},B_{32}] \ar@/_0.5pc/[dl] & [B_{34},C_{43}] \ar[l] & \\
& [C_{12},B_{21}] \ar@/_0.5pc/[dl] \ar@/^0.5pc/[dr] \ar[d] & & & \\
[B_{01},C_{98}]  \ar@/^1pc/[uur]  & [A_{12},C_{09}] \ar@/_1pc/[uurr] & [A_{01},C_{10}] \ar[r]  & [B_{23},A_{05}]  \ar@/_1pc/[ull] & \\ 
& [C_{12},C_{76}] \ar[u] \ar@/^0.5pc/[ul] \ar@/_0.5pc/[ur] & [B_{05},C_{78}] \ar[l] & [A_{43},C_{56}] \ar[l] &  \\
}$
\caption{The heredity graph of edges $\mathcal{E}(\sigma)$ of the Tribonacci substitution $\sigma$.}
\label{tribo-aretes}
\end{figure}
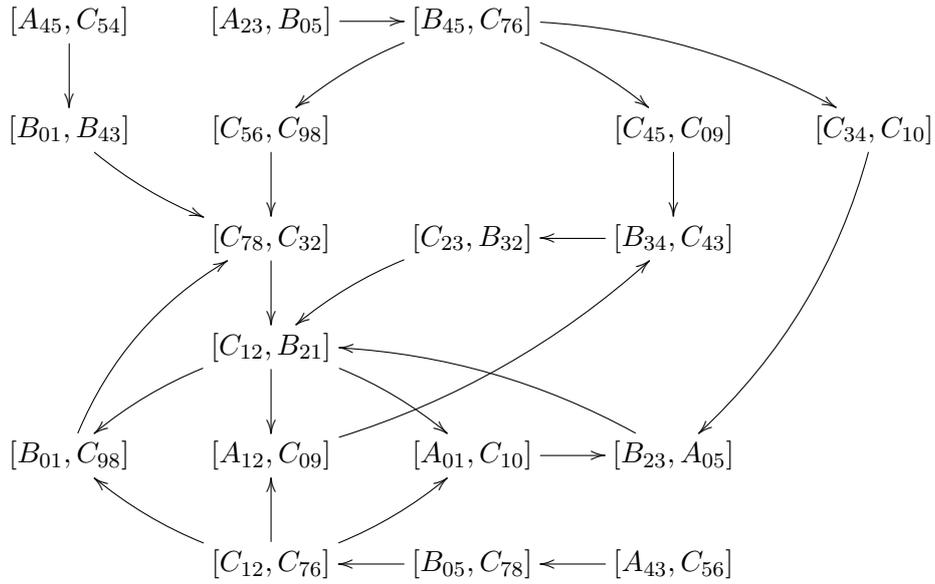

The core property is checked for $\sigma^3(C)$: we see on Figure~\ref{tribo-iter} that $\core(\sigma^3(C))\neq\emptyset$.
Hence $\sigma$ is a topological substitution.
\end{proof}

\begin{figure}[!ht]
\centering
\myvcenter{%
\begin{tikzpicture}
[x={(-0.562917cm,-0.325000cm)}, y={(0.562917cm,-0.325000cm)}, z={(0.000000cm,0.650000cm)}]
\tuileA{A}{(0,0,0)}{white}{0}{5}{4}{3}{2}{1}
\tuileB{B}{(0,0,0)}{white}{0}{5}{4}{3}{2}{1}
\tuileC{C}{(0,0,0)}{white}{0}{9}{8}{7}{6}{5}{4}{3}{2}{1}
\end{tikzpicture}}
    \hfil
\myvcenter{%
\begin{tikzpicture}
[x={(-0.562917cm,-0.325000cm)}, y={(0.562917cm,-0.325000cm)}, z={(0.000000cm,0.650000cm)}]
\tuileA{A}{(0,0,0)}{white}{0}{5}{4}{3}{2}{1}
\tuileB{B}{(0,0,0)}{white}{0}{5}{4}{3}{2}{1}
\tuileC{C}{(0,0,0)}{white}{0}{9}{8}{7}{6}{5}{4}{3}{2}{1}
\tuileB{B}{(-1,0,2)}{white}{0}{5}{4}{3}{2}{1}
\tuileC{C}{(-1,0,2)}{white}{0}{9}{8}{7}{6}{5}{4}{3}{2}{1}
\end{tikzpicture}}
    \hfil
\myvcenter{%
\begin{tikzpicture}
[x={(-0.562917cm,-0.325000cm)}, y={(0.562917cm,-0.325000cm)}, z={(0.000000cm,0.650000cm)}]
\tuileA{A}{(0,0,0)}{white}{0}{5}{4}{3}{2}{1}
\tuileB{B}{(0,0,0)}{white}{0}{5}{4}{3}{2}{1}
\tuileC{C}{(0,0,0)}{white}{0}{9}{8}{7}{6}{5}{4}{3}{2}{1}
\tuileB{B}{(-1,0,2)}{white}{0}{5}{4}{3}{2}{1}
\tuileC{C}{(-1,0,2)}{white}{0}{9}{8}{7}{6}{5}{4}{3}{2}{1}
\tuileA{A}{(0,2,-3)}{white}{0}{5}{4}{3}{2}{1}
\tuileB{B}{(0,2,-3)}{white}{0}{5}{4}{3}{2}{1}
\tuileC{C}{(0,2,-3)}{white}{0}{9}{8}{7}{6}{5}{4}{3}{2}{1}
\tuileC{C}{(-1,2,-1)}{white}{0}{9}{8}{7}{6}{5}{4}{3}{2}{1}
\end{tikzpicture}}
\caption{Iterating the Tribonacci topological substitution: $\sigma(C)$,~$\sigma^2(C)$~and~$\sigma^3(C)$.
}
\label{tribo-iter}
\end{figure}
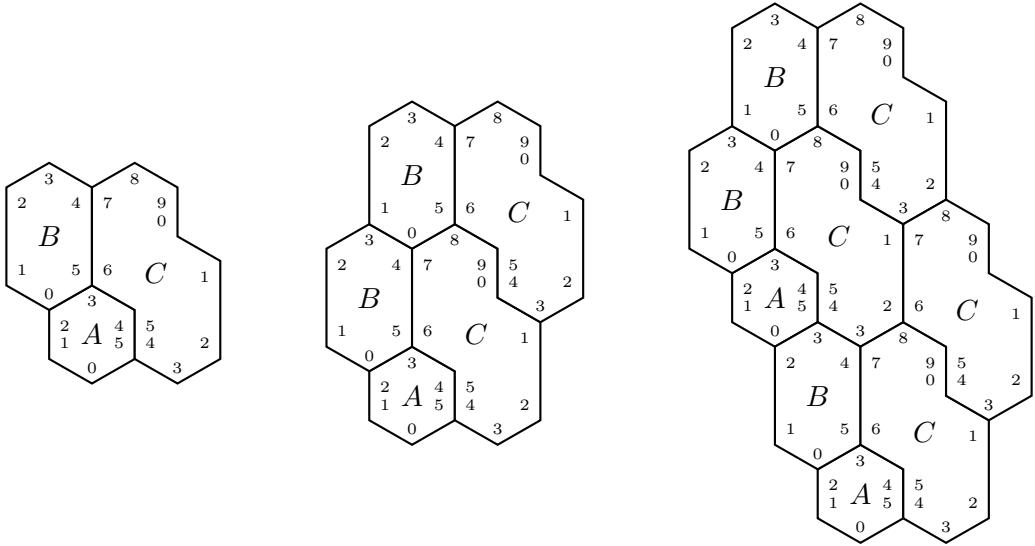

\subsubsection{Configurations at the vertices}

We denote by $V$ the set of vertices of the prototiles $A$, $B$, $C$.
The {\bf heredity graph of vertices} is an oriented
graph denoted by $\mathcal{V}(\sigma)$. 
The set of vertices of $\mathcal{V}(\sigma)$
is the set $V$. 
Let $T,T'\in\{A,B,C\}$, and let $v$ be a vertex of $T$ and $v'$ be a vertex of $T'$:
there is an oriented edge in $\mathcal{V}(\sigma)$ from $v$ to $v'$ 
if $\sigma(v)$ is a vertex of type $v'$ of a tile of type $T'$.

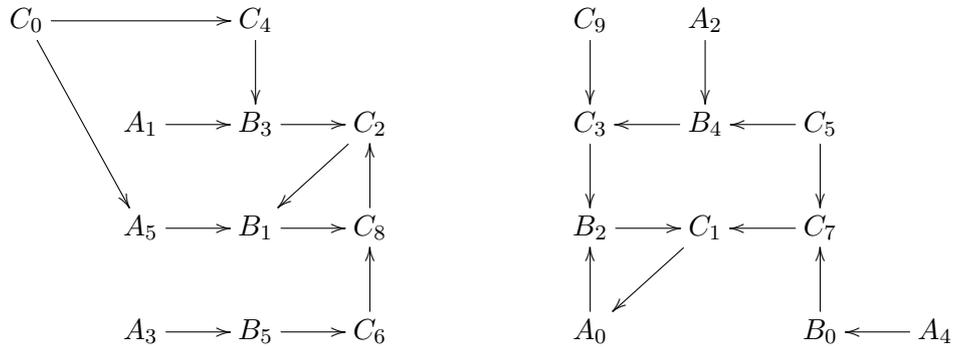
\begin{figure}[!ht]
\centering
$ \xymatrix{ 
C_{0}\ar[rr]\ar[ddr]&&C_4\ar[d]\\
&A_1\ar[r]&B_3\ar[r]&C_2\ar[dl]\\
&A_5\ar[r]&B_1\ar[r]&C_8\ar[u]&\\
&A_3\ar[r]&B_5\ar[r]&C_6\ar[u]&
}$
$ \xymatrix{ 
&C_9\ar[d]&A_2\ar[d]\\
&C_3\ar[d]&B_4\ar[l]&C_5\ar[d]\ar[l]\\
&B_2\ar[r]&C_1\ar[dl]&C_7\ar[l]\\
&A_0\ar[u]&&B_0\ar[u]&A_4\ar[l]
}$
\caption{The heredity graph of the vertices $\mathcal{V}(\sigma)$.}
\label{fig:vertex list}
\end{figure}

The graph $\mathcal{V}(\sigma)$ is given in Figure~\ref{fig:vertex list}.
It can be used to control the valences of the vertices in $\sigma^\infty(C)$
thanks to the following property proved in \cite{Beda.Hil.10}. A vertex $v\in V$ is a {\bf divided vertex} if there are
at least two oriented edges in $\mathcal{V}(\sigma)$ coming out of $v$. 
We denote by $V_{\mathcal D}$ the subset of $V$ which consists of all divided vertices.
The following properties are equivalent, see \cite{Beda.Hil.10}:
\begin{itemize}
\item The complex $\sigma^\infty(C)$ has bounded valence.
\item Every infinite oriented path in 
$\mathcal{V}(\sigma)$ crosses vertices of $V_{\mathcal D}$ only finitely many times.
\item The oriented cycles of $\mathcal{V}(\sigma)$ do not cross any vertex of $V_{\mathcal D}$.
\end{itemize}

\begin{lemma}\label{lem:val-bounded}
The valence of each vertex in $\sigma^\infty(C)$ is bounded.
\end{lemma}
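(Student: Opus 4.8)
The plan is to invoke the equivalence recalled just above the statement (established in \cite{Beda.Hil.10}): the complex $\sigma^\infty(C)$ has bounded valence if and only if no oriented cycle of the heredity graph of vertices $\mathcal{V}(\sigma)$ crosses a vertex of $V_{\mathcal D}$. This reduces the lemma to two finite, purely graph-theoretic verifications carried out directly on Figure~\ref{fig:vertex list}: first pinning down the set $V_{\mathcal D}$ of divided vertices, and then checking that these vertices lie on no oriented cycle.

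First I would read off $V_{\mathcal D}$ by counting, for each of the $22$ vertices, the number of arrows leaving it in $\mathcal{V}(\sigma)$. Inspecting the two diagrams of Figure~\ref{fig:vertex list}, every vertex has exactly one outgoing arrow, with two exceptions: $C_0$, from which arrows go to $C_4$ and to $A_5$, and $C_5$, from which arrows go to $C_7$ and to $B_4$. Hence $V_{\mathcal D}=\{C_0,C_5\}$.

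Next I would observe that neither $C_0$ nor $C_5$ occurs as the head (target) of any arrow of $\mathcal{V}(\sigma)$; that is, both divided vertices have in-degree $0$. This is again a direct inspection of the figure: the vertices occurring as targets are exactly $A_0, A_5, B_0, \ldots, B_5, C_1, C_2, C_3, C_4, C_6, C_7, C_8$, and $C_0, C_5$ are not among them. Since any vertex lying on an oriented cycle must have positive in-degree, no oriented cycle of $\mathcal{V}(\sigma)$ can pass through $C_0$ or $C_5$. The third condition of the equivalence is therefore satisfied, and the lemma follows. (Equivalently, one may use the second condition: as the only divided vertices are sources of $\mathcal{V}(\sigma)$, any infinite oriented path meets $V_{\mathcal D}$ only at its very start, hence finitely often.)

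The argument presents no genuine obstacle beyond the bookkeeping: the single point requiring care is the faithful extraction of the out-degrees and in-degrees from Figure~\ref{fig:vertex list}, so that the divided vertices are correctly located and then seen to be sources of the graph.
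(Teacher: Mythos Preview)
Your proof is correct and follows essentially the same approach as the paper: identify $V_{\mathcal D}=\{C_0,C_5\}$ from Figure~\ref{fig:vertex list} and invoke the cited equivalence by checking that neither divided vertex lies on an oriented cycle. Your observation that $C_0$ and $C_5$ have in-degree $0$ is a slightly more explicit justification of this last point than the paper gives, but the argument is the same.
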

\begin{proof}
We consider Figure~\ref{fig:vertex list}. Remark that $V_{\mathcal D}=\{C_0,C_5\}$. 
Moreover, neither $C_0$ nor $C_5$ is crossed by an oriented cycle of $\mathcal{V}(\sigma)$.
Hence the valence of the vertices of $\sigma^\infty(C)$ is bounded by the previous property.
\end{proof}

Now we introduce another graph, which is called
the {\bf configuration graph of vertices} and is denoted by 
$\mathcal{C}(\sigma)$.
We consider the equivalence relation on $k$-tuples of elements of $V$ ($k\in\mathbb{N}$) generated by:
$$(x_1,\dots,x_{k-1},x_k)\sim(x_k,x_1,\dots,x_{k-1}) \;\text{ and }\; (x_1,x_2,\dots,x_k)\sim(x_k,\dots,x_2,x_1).$$
Let $[x_1,\dots,x_k]$ denote the equivalence class of $(x_1,\dots,x_k)$. Let $K$ be the maximal valence of a vertex in $\sigma^\infty(C)$. Let $W$ be the set of equivalence classes of $k$-tuples with $2\leq k\leq K$.
A vertex $v$ in the interior of a patch $\sigma^n(C)$ $(n\geq 1)$ 
defines an element $[x_1,\dots,x_k]\in W$ 
(where $k$ is the valence of $v$). 
Indeed, the faces adjacent to $v$ are cyclically ordered, 
and $x_i$ is the type of the vertex of the $i$-th face which is glued on $v$. 

We define the oriented graph $\mathcal{C}(\sigma)$ as follows.
Let $W_0$ be the subset of $W$ defined by the vertices occuring in the interior of 
some $\sigma^n(C)$ for $n\geq 1$. 
An element of $W_0$ is called a {\bf vertex configuration}.
The set of vertices of $\mathcal{C}(\sigma)$ is $W_0$. 
For any $s\in W_0$, we choose some $T\in \mathcal{T}, n\geq 1$ and $v$ a vertex in the interior of $\sigma^n(T)$ which defines $s$. 
Let $s'$ the element of $W_0$ defined by $\sigma(v)$.
There is an oriented edge in $\mathcal{C}(\sigma)$ from $s$ to $s'$. 
We notice that this construction does not depend of the choice of $T$ and $n$.

In practice, to build the graph $\mathcal{C}(\sigma)$, we first remark that a vertex $v$ in the interior
of some $\sigma^n(C)$ ($n\geq1$) is 
either the image of a vertex in the interior of $\sigma^{n-1}(C)$,
or is in the interior of a path of edges which is the image of an interior edge of $\sigma^{n-1}(C)$.
Thus we first make the list of vertex configurations for:
\begin{itemize}
  \item vertices in the interior of the image of a tile: we get $[C_5,A_4]$ et $[C_6,A_3,B_5]$;
  \item vertices in the interior of the image of an interior edge.
  These ones can be derived from the vertices of $\mathcal{E}(\sigma)$
  with a least 2 outing edges. There are 3 such vertices in of $\mathcal{E}(\sigma)$:
  \begin{itemize}
    \item $[B_{45},C_{76}]$ which gives rise to vertex configurations $[C_{4},C_{0}]$ and $[C_{5},C_{9}]$,
    \item $[C_{12},B_{21}]$ and $[C_{12},C_{76}]$ 
    which gives rise to vertex configurations $[C_{4},C_{0}]$ and $[C_{5},C_{9}]$.
  \end{itemize}
\end{itemize}
Then we iteratively compute the vertex configurations obtained as the image under $\sigma^n$ of the vertex 
configurations in 
$\{[C_5,A_4], [C_6,A_3,B_5], [C_{4},C_{0}], [C_{5},C_{9}], C_{4},C_{0}], [C_{5},C_{9}]
\}$.
The graph $\mathcal{C}(\sigma)$ is represented on Figure~\ref{fig:configurations}.

\begin{figure}[ht]
\[
\xymatrix{ 
[C_5,A_4]
\ar[r] & 
[C_7,B_4,B_0]
\ar[r] & 
[C_1,C_3,C_7]
\ar[r] & 
[A_0,B_2,C_1]  
\ar@(ul,ur) \\
[C_5,C_9] 
\ar[r] & 
[C_7,B_4,C_3] 
\ar[r] & 
[C_1,C_3,B_2] 
\ar[ur] & 
[A_1,C_0] 
\ar[dl]  \\
[A_2,C_9,B_0] 
\ar[ur] & 
[B_1,B_3,C_2] 
\ar[dl] & 
[A_5,C_4,B_3] 
\ar[l] & 
[C_0,C_4] 
\ar[l]  \\
[B_1,C_8,C_2] 
\ar@(dl,dr) & 
[C_2,C_6,C_8] 
\ar[l] & 
[C_8,B_5,C_6] 
\ar[l] & 
[C_6,A_3,B_5] 
\ar[l]
}
\]
\caption{The configuration graph of the vertices $\mathcal C(\sigma)$.}
\label{fig:configurations}
\end{figure}
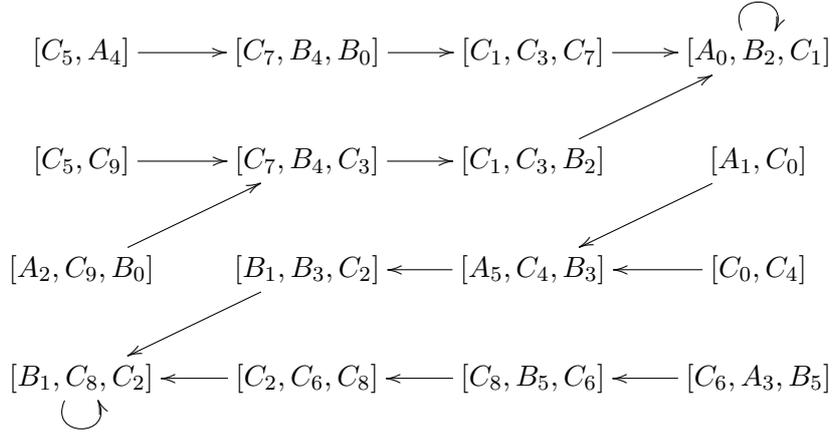

%%%%%%%%%%%%%%%%%%%%%%%%%
\subsubsection{A geometric realization of $\sigma^\infty(C)$}
%%%%%%%%%%%%%%%%%%%%%%%%%

\begin{proposition}
\label{prop:topotiling}
The complex $\sigma^\infty(C)$ can be realized as a tiling of $\mathbb{R}^2$. This tiling is denoted by $\Ttop$.
\end{proposition}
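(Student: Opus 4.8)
The plan is to turn the purely combinatorial complex $\sigma^\infty(C)$ into a flat Euclidean surface and then invoke the classification of such surfaces. Concretely, I would first equip each of the three prototiles $A$, $B$, $C$ with an honest Euclidean polygonal shape, pull these shapes back onto the faces of $\sigma^\infty(C)$ to define a piecewise-Euclidean (length) metric, and show that this metric is everywhere locally flat. Since $\sigma^\infty(C)$ is homeomorphic to $\mathbb{R}^2$ (hence simply connected), and since the metric is complete and flat with no cone singularities, it is isometric to the Euclidean plane. Transporting the faces through this isometry produces a polygonal tiling $\Ttop$ of $\mathbb{R}^2$ whose associated $2$-complex (in the sense of Section~\ref{section:complex}) is, by construction, isomorphic as a CW-complex to $\sigma^\infty(C)$; this is exactly a geometric realization in the sense of Section~\ref{section:complex}, and gives Definition~\ref{def:tiling}.

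The first step is to assign to every oriented edge type of $A$, $B$, $C$ a vector in $\mathbb{R}^2$, subject to three linear constraints: reversing an edge negates its vector; the edge vectors read cyclically around the boundary of each prototile sum to zero (so that each prototile closes up as a genuine, embedded polygon); and for every adjacency occurring in $\sigma^\infty(C)$ the two edge types that are glued receive matching vectors, so that the gluing of the two faces is realized by a \emph{translation}. The admissible adjacencies (interior edges that actually appear) are exactly the vertices of the heredity graph $\mathcal{E}(\sigma)$ of Figure~\ref{tribo-aretes}, so the compatibility conditions form a \emph{finite} linear system over the edge types. A consistent solution yields three fixed polygons, each appearing in a single orientation, which is compatible with $\Gamma$ being the translation group.

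The second step is to verify local flatness. Away from the vertices the metric is trivially Euclidean (the interior of a face, and the interior of an edge where two half-discs are glued along a segment, are flat). At a vertex the cone angle is the sum of the interior angles of the faces meeting there, so I must check that this sum equals $2\pi$ at \emph{every} vertex of $\sigma^\infty(C)$. Here Lemma~\ref{lem:val-bounded} (bounded valence) together with the finiteness of the configuration set $W_0$ is decisive: the vertex figures occurring in $\sigma^\infty(C)$ are precisely the finitely many vertex configurations listed in the configuration graph $\mathcal{C}(\sigma)$ of Figure~\ref{fig:configurations}. For each such configuration the angle-sum condition is one numerical equation on the (already chosen) angles of $A$, $B$, $C$, so flatness reduces to a finite check. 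Once all cone angles equal $2\pi$, developing the metric around any small loop encircling a vertex produces a closed loop whose total turning is $0$; as the gluing maps are translations, the holonomy around each vertex is a translation fixing a point, hence the identity. Combined with simple connectivity of $\sigma^\infty(C)$, this shows that the developing map $D\colon \sigma^\infty(C)\to\mathbb{R}^2$, obtained by fixing one face and extending across edges by the gluing translations, is well defined (independent of the chosen combinatorial path) and is a local homeomorphism.

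Finally, I would upgrade $D$ from a local to a global homeomorphism. Because there are only three isometry types of faces, their diameters are bounded above and their areas bounded below, and by Lemma~\ref{lem:val-bounded} the complex is locally finite; hence the piecewise-Euclidean metric is complete. A complete, simply connected, flat surface is isometric to $\mathbb{R}^2$, and this isometry is precisely $D$; in particular $D$ is bijective. Its image is therefore a face-to-face covering of $\mathbb{R}^2$ by translated copies of the polygons $A$, $B$, $C$ with pairwise disjoint interiors, i.e.\ the sought tiling $\Ttop$. I expect the main obstacle to be the first two steps taken together, namely exhibiting edge vectors that simultaneously close up all three prototiles, make every glued pair match by a translation, \emph{and} force the angle sum to be $2\pi$ at each of the finitely many configurations of $\mathcal{C}(\sigma)$: this is an a priori overdetermined compatibility condition, and its solvability is what encodes that the abstract complex admits a Euclidean translation structure. (The global injectivity, by contrast, is handled uniformly by the flat-surface classification once local flatness and completeness are established.)
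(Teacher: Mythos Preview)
Your plan is sound and would lead to a correct proof, but it takes a heavier and more abstract route than the paper. You set up the general machinery of piecewise-Euclidean structures, developing maps, and the classification of complete simply connected flat surfaces, and then reduce everything to a finite linear/angle system indexed by $\mathcal E(\sigma)$ and $\mathcal C(\sigma)$. That is exactly the right list of obstructions, and you are right that the finiteness of $\mathcal C(\sigma)$ is what makes the check feasible.

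The paper, by contrast, sidesteps the whole linear system with one observation: the decagon $C$ can be cut by a single diagonal (between the vertices $C_0$ and $C_5$) into two hexagons. Replacing every $C$-tile in $\sigma^\infty(C)$ by this pair of hexagons yields a complex $(\sigma^\infty(C))_{\text{hex}}$ all of whose faces are hexagons; one then runs through the vertex configurations of Figure~\ref{fig:configurations} (plus the new vertices created on the diagonal) and sees that every vertex has valence exactly $3$. A $2$-complex homeomorphic to the plane with only hexagonal faces and all vertices of valence $3$ is combinatorially the standard hexagonal tiling $X_{\text{hex}}$, which is already a tiling of $\mathbb R^2$ by regular hexagons. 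Undoing the cut realizes $A$ and $B$ as regular hexagons and $C$ as the non-convex decagon formed by two regular hexagons glued along an edge.

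So the paper's argument is really a particular, very short solution of your linear/angle system: all edges get the same length, all interior angles are $2\pi/3$, and the ``complete flat simply connected surface $\Rightarrow$ $\mathbb R^2$'' step is replaced by the elementary identification with $X_{\text{hex}}$. What your approach buys is generality---it would apply to topological substitutions where no such hexagonal subdivision exists---at the cost of actually having to exhibit a consistent edge-vector assignment and invoke the flat-surface classification. What the paper's trick buys is that no equations need to be solved and no Riemannian geometry is invoked: the realization drops out of a combinatorial comparison with the honeycomb.
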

\begin{proof}
We first recall that a 2-dimensional CW-complex with hexagonal faces
such that each edge belongs to 2 faces and each vertex belongs to 3 faces
is isomorphic to the 2-dimensional CW-complex $X_{\text{hex}}$ defined by 
the tiling of $\mathbb{R}^2$ by regular euclidean hexagons.

Let $C_{\text{hex}}$ the patch made of two hexagonal faces
obtained by dividing $C$ along an edge between vertices
$C_0$ and $C_5$.
Given a patch $P$ made of tiles of types $A$, $B$ and $C$,
we build a patch $P_{\text{hex}}$ made of hexagonal faces 
by replacing the faces of type $C$ by $C_{\text{hex}}$.

We claim that $(\sigma^\infty(C))_{\text{hex}}=X_{\text{hex}}$.
Indeed, the vertex configurations of $\sigma^\infty(C)$ are given by
the vertices of the graph $\mathcal{C}$ (see Figure~\ref{fig:configurations}),
and for every of them, we check that $P=P_{\text{hex}}$ for the corresponding 
patch $P$.

Since $(\sigma^\infty(C))_{\text{hex}}=X_{\text{hex}}$, it is straightforward to derive
that $\sigma^\infty(C)$ can be geometrically realized as a tiling of $\mathbb{R}^2$,
where $A$ and $B$ are realized by regular hexagons, and $C$ by a decagon
obtained by gluing two regular hexagons along an edge.
\end{proof}

\begin{remark}\label{rem:valence-sommet}
In particular, we can now precise Lemma \ref{lem:val-bounded}: The valence of a vertex in $\sigma^\infty(C)$ is either $2$ or $3$.
\end{remark}

%%%%%%%%%%%%%%%%%%%%%%%%%%%%%%%%%%%%%%%%%
\subsection{The pointed topological substitution $\hat\sigma$}
%%%%%%%%%%%%%%%%%%%%%%%%%%%%%%%%%%%%%%%%%

Let $\PP$ be the set of pairs $(P,T)$ where $P$ is a patch in $\sigma^\infty(C)$ and $T$ is a tile in $\sigma^\infty(C)$.
We stress that $T$ need not lie in $P$.
Such a pair $(P,T)\in\PP$ is a {\bf pointed patch}, and $T$ is the {\bf base tile} of the pointed patch $(P,T)$. 

For our purposes, we need to consider a kind of ``pointed'' version $\hat\sigma$ of $\sigma$:
this will be a map
$$\begin{array}{rrcl}
\hat\sigma: & \PP & \rightarrow & \PP \\
 & (P,T) & \mapsto & (\sigma(P), \bt(T)).
\end{array}$$
To completely define $\hat\sigma$, it remains now to define the map $\bt$.
Let $T$ be a tile of $\sigma^\infty(C)$.
Then $\sigma(T)$ is a patch of $\sigma^\infty(C)$.
If $\sigma(T)$ is a tile (this is the case when $T$ is of type $A$ or $B$), 
then we simply set $\bt(T)=\sigma(T)$.
If $T$ is of type $C$, then we define $\bt(T)$ as the tile of type $C$ in $\sigma(T)$.

\begin{remark}
In the example this last choice is a bit arbitrary: in particular, we could have considered other versions of 
$\hat\sigma$ where, for a tile $T$ of type $C$, $\bt(T)$ would be the tile of type $A$ (or $B$) in $\sigma(T)$.
In that case, we would have to modify accordingly the definition of the position map $\omega_0$ of Section~\ref{sec:omega_0}.
\end{remark}

%%%%%%%%%%%%%%%%%%%%%%%%%%%%%%%%%
%%%%%%%%%%%%%%%%%%%%%%%%%%%%%%%%%

\section{The Tribonacci dual substitution and its fractal tilings}
\label{sect:frac}

We recall that a {\em substitution} is a morphism of the free monoid (of rank $d$).
There is a general construction introduced in \cite{IO93} and generalized by \cite{Arn.Ito.01} that associates to a substitution a so-called {\em dual substitution}.
To avoid to reproduce the general formalism of \cite{Arn.Ito.01}, we focus in sections~\ref{sect:dual_sub} and \ref{sec:subst}
on the {\em Tribonacci dual substitution} associated to the the Tribonacci substitution $1\mapsto 12$, $2\mapsto 13$, $3\mapsto 1$. In particular, $d=3$.

Dual substitutions act on facets of $\mathbb R^3$ (for the cellular decomposition of $\mathbb R^3$ given by $\mathbb Z^3$ translated copies of the unit cube) : the image of a facet is a set of facets. 
Hence, when iterating a dual substitution, one gets bigger sets of facets. A priori, there can be some overlaps: in that case, facets have to be count with some multiplicity, which leads to the notion of multiset of facets. 

The data that encode a multiset of facet is given by: the type (an element of the set $\{1,2,3\}$), the position (an element of $\mathbb Z^3$) and the signed multiplicity (an element of $\mathbb Z$) of each facet.
This setting is formalized in section~\ref{sec:multiset}, leading to the equivalent notions of {\em weight functions} and {\em multisets of facets}. We detail the equivalence of the two points of view, so that later in section~\ref{sec:link} we will swap from one point of view to the other one according to the context.

\subsection{Multisets of facets}\label{sec:multiset}

We denote by $(\mathbf e_1, \mathbf e_2, \mathbf e_3)$ the canonical basis of $\mathbb R^3$. In this article, this basis will be represented  as follows in the different figures.
\begin{center}
\begin{tikzpicture}[x={(-0.216506cm,-0.125000cm)}, y={(0.216506cm,-0.125000cm)}, z={(0.000000cm,0.250000cm)}]
\draw[thick, ->] (0,0,0)--(1,0,0);
\draw[thick, ->] (0,0,0)--(0,1,0);
\draw[thick, ->] (0,0,0)--(0,0,1);
\draw (1.7,-0.3,0) node{$\mathbf e_1$};
\draw (-0.3,1.7,0) node{$\mathbf e_2$};
\draw (0,0,1.6) node{$\mathbf e_3$};
\end{tikzpicture}
\end{center}
Let $\mathbf x \in \mathbb{Z}^3$ and let $i\in\{1,2,3\}$.
The \tdef{facet} $[\mathbf x,i]^*$ of \tdef{vector} $\mathbf x$ and \tdef{type} $i$ is a subset of $\mathbb R^3$ defined by:
\begin{align*}
\lbrack \mathbf x, 1 \rbrack^* & = \{\mathbf x + \lambda \mathbf e_2 + \mu \mathbf e_3 : \lambda,\mu \in [0,1] \} =
    \myvcenter{\begin{tikzpicture}
    [x={(-0.216506cm,-0.125000cm)}, y={(0.216506cm,-0.125000cm)}, z={(0.000000cm,0.250000cm)}]
    \definecolor{facecolor}{rgb}{0.8,0.8,0.8}
    \fill[thick, fill=facecolor, draw=black, shift={(0,0,0)}]
    (0, 0, 0) -- (0, 1, 0) -- (0, 1, 1) -- (0, 0, 1) -- cycle;
    \node[circle,fill=black,draw=black,minimum size=1.2mm,inner sep=0pt] at (0,0,0) {};
    \end{tikzpicture}} \\
\lbrack \mathbf x, 2 \rbrack^* & = \{\mathbf x + \lambda \mathbf e_1 + \mu \mathbf e_3 : \lambda,\mu \in [0,1] \} =
    \myvcenter{\begin{tikzpicture}
    [x={(-0.216506cm,-0.125000cm)}, y={(0.216506cm,-0.125000cm)}, z={(0.000000cm,0.250000cm)}]
    \definecolor{facecolor}{rgb}{0.8,0.8,0.8}
    \fill[thick, fill=facecolor, draw=black, shift={(0,0,0)}]
    (0, 0, 0) -- (0, 0, 1) -- (1, 0, 1) -- (1, 0, 0) -- cycle;
    \node[circle,fill=black,draw=black,minimum size=1.2mm,inner sep=0pt] at (0,0,0) {};
    \end{tikzpicture}} \\
\lbrack \mathbf x, 3 \rbrack^* & = \{\mathbf x + \lambda \mathbf e_1 + \mu \mathbf e_2 : \lambda,\mu \in [0,1] \} =
    \myvcenter{\begin{tikzpicture}
    [x={(-0.216506cm,-0.125000cm)}, y={(0.216506cm,-0.125000cm)}, z={(0.000000cm,0.250000cm)}]
    \definecolor{facecolor}{rgb}{0.8,0.8,0.8}
    \fill[thick, fill=facecolor, draw=black, shift={(0,0,0)}]
    (0, 0, 0) -- (1, 0, 0) -- (1, 1, 0) -- (0, 1, 0) -- cycle;
    \node[circle,fill=black,draw=black,minimum size=1.2mm,inner sep=0pt] at (0,0,0) {};
    \end{tikzpicture}}.
\end{align*}
On each of the previous pictures, the symbol $\bullet$ represents the endpoint of the vector $\mathbf x$.
We set $\mathcal{F}=\{[\mathbf x,i]^*,\mathbf x\in\mathbb Z^3, i\in\{1,2,3\}\}$.

Let $\mathcal W$ be the set of maps from $\mathcal{F}$ to $\mathbb Z_{\geq 0}$: such a map is called a \tdef{weight function}. A weight function $w\in \mathcal W$ gives a weight $w([\mathbf x,i]^*)\in\mathbb Z_{\geq 0}$ to any facet. Equipped with the addition of maps, $\mathcal W$ is a monoid.

A \tdef{multiset of facets} is a map $m:\mathbb Z^3 \rightarrow \mathbb Z_{\geq0}^3$.
We denote by $\mathcal M$ the set of {multisets of facets}.
The set $\mathcal M$, equipped with the addition of maps, is a monoid. 

Multisets of facets and weight functions are equivalent objects.
Indeed, a multiset $m\in\mathcal M$ defines a weight function $w_m\in \mathcal W$ by declaring that $w_m([\mathbf x,i]^*)$ is the $i$th coordinate of $m(\mathbf x)$.
The map $$\begin{array}{ccc}
\mathcal M&\rightarrow&\mathcal W\\
m&\mapsto&w_m
\end{array}$$ is an isomorphism of monoids. 
The inverse of the map is given by
$$\begin{array}{ccc}
\mathcal W&\rightarrow&\mathcal M\\
w&\mapsto& \bigg(\mathbf x\mapsto \Big(w([\mathbf x,1]^*),w([\mathbf x,2]^*),w([\mathbf x,3]^*)\Big)\bigg)
\end{array}$$

The group $\mathbb Z^3$ acts naturally on $\mathcal M$: if $\mathbf v\in\mathbb Z^3$, $m\in\mathcal M$ then 
$$m+\mathbf v: \mathbf x\mapsto m(\mathbf x-\mathbf v).$$

The {\bf support} $\supp(w)$ of a weight function $w$ is the union of facets which have positive weight:
$$\supp(w)=\bigcup_{w([\mathbf x,i]^*)>0} [\mathbf x,i]^* .$$
It is a subset of $\mathbb R^3$.
The {\bf support} $\supp(m)$ of a multiset of facets $m$ is the support of the corresponding weight function:
$$\supp(m)=\supp(w_m).$$

Let $\mathcal W^\circ\subset \mathcal W$ be the subset of weight functions which take values in $\{0,1\}$. We denote by $\mathcal M^\circ$ the corresponding subset of $\mathcal M$: a multiset of facets $m$ is in $\mathcal M^\circ$ if and only if for all $\mathbf x\in\mathbb Z^3$, the coordinates of $m(\mathbf x)$ are in $\{0,1\}$.

\begin{remark}\label{rem-support}
We notice that a multiset of facets in $\mathcal M^\circ$ (or a weight function in $\mathcal W^\circ$) is totally determined by its support.
\end{remark}

%%%%%%%%%%%%%%%%%%%%%%%%%%%%
\subsection{Dual substitutions}\label{sect:dual_sub}
%%%%%%%%%%%%%%%%%%%%%%%%%%%%

In this Section we quickly review a construction due to Arnoux-Ito \cite{Arn.Ito.01}
that associates to a unimodular substitution $s$ what is called a dual substitution
$\mathbf E_1^*(s)$. For details we refer to~\cite{Arn.Ito.01,CANT}.
In particular, this construction can be applied to the Tribonacci substitution to
lead the dual substitution $\EOSS$ defined below. The definition of a substitution will be given in Section \ref{sec:subst}.

Consider \[
\Ms = \begin{pmatrix}1&1&1\\1&0&0\\ 0&1&0 \end{pmatrix}.
\]
This matrix has characteristic polynomial $X^3-X^2-X-1$.
Its dominant eigenvalue $\beta$ is a Pisot number:
$\beta > 1$ and its conjugates $\alpha, \overline{\alpha} \in \mathbb C$ are such that $|\alpha|<1$.
The euclidean space $\mathbb R^3$ is hence decomposed as the direct sum of
the \tdef{expanding line} (spanned by the left $\beta$-eigenvector $\mathbf v_\beta$ of $\Ms$)
and the \tdef{contracting plane} $\mathcal P$ associated with
the complex eigenvalues $\alpha, \overline{\alpha}$.
Let $\pi_\beta : \mathbb R^3 \rightarrow \mathcal P$ be the projection on $\mathcal P$
along the line $\mathbb R \mathbf v_\beta$.
We denote by $\mathbf h : \mathcal P \rightarrow \mathcal P$
the restriction the action of $\Ms$ to $\mathcal P$,
which is contracting because $|\alpha| < 1$. Remark that $M_s,\mathbf h, \pi_\beta$ commute.

\begin{definition}
\label{def:eos}
We define
\[
\EOSS :
\begin{cases}
\lbrack\mathbf x, 1\rbrack^*
    \mapsto
    \mathbf M_s^{-1}\mathbf x + \big(\lbrack\mathbf 0,1\rbrack^* \cup \lbrack \mathbf 0,2\rbrack^* \cup \lbrack \mathbf 0,3\rbrack^*\big)\\
\lbrack\mathbf x, 2\rbrack^*
    \mapsto
    \mathbf M_s^{-1}\mathbf x + \lbrack\mathbf e_3, 1\rbrack^*\\
\lbrack\mathbf x, 3\rbrack^*
    \mapsto
    \mathbf M_s^{-1}\mathbf x + \lbrack\mathbf e_3, 2\rbrack^*.
\end{cases}
\]
\end{definition}

Alternatively $\EOSS$ can be defined using multisets as following:
\begin{itemize}
\item The image of $\lbrack\mathbf x, 1\rbrack^*$ by $\EOSS$ is the multiset $(\mathbb Z^3,m)$ where 
$$m(\mathbf y) = \begin{cases}
\mathbf 0 & \;\;\text{ if }\;\; \mathbf y\neq  \mathbf M_s^{-1}\mathbf x\\
(1,1,1) & \;\;\text{ if }\;\; \mathbf y= \mathbf M_s^{-1}\mathbf x.
 \end{cases}$$
\item The image of $\lbrack\mathbf x, 2\rbrack^*$ by $\EOSS$ is the multiset $(\mathbb Z^3,m)$ where 
$$m(\mathbf y) = \begin{cases}
\mathbf 0 & \;\;\text{ if }\;\; \mathbf y\neq  \mathbf M_s^{-1}\mathbf x\\
(0,0,1) & \;\;\text{ if }\;\; \mathbf y= \mathbf M_s^{-1}\mathbf x.
 \end{cases}$$

\item The image of $\lbrack\mathbf x, 3\rbrack^*$ by $\EOSS$ is the multiset $(\mathbb Z^3,m)$ where 
$$m(\mathbf y) = \begin{cases}
\mathbf 0 & \;\;\text{ if }\;\; \mathbf y\neq  \mathbf M_s^{-1}\mathbf x\\
(0,1,0) & \;\;\text{ if }\;\; \mathbf y= \mathbf M_s^{-1}\mathbf x.
 \end{cases}$$
\end{itemize}

We extend $\EOSS$ to $\mathcal M$ by declaring that the image of a union of faces
is the union of the images of these faces (the multiplicities of faces add up).
We also note for future application that for all $\mathbf x, \mathbf u\in\mathbb Z^3$,
and for all $i\in\{1,2,3\}$, we have
\begin{equation}\label{equE}
\EOSS\big([\mathbf x,i]^*+ \mathbf u\big)=\EOSS[\mathbf x,i]^*+ \mathbf M_s^{-1}\mathbf u\end{equation}

In practice, in order to simplify the notation, we represent $\EOSS$ by the following pictures
\begin{align*}
%%% image of 2
    \myvcenter{\begin{tikzpicture}
    [x={(-0.346410cm,-0.200000cm)}, y={(0.346410cm,-0.200000cm)}, z={(0.000000cm,0.400000cm)}]
    \definecolor{facecolor}{rgb}{0.8,0.8,0.8}
    \fill[thick, fill=facecolor, draw=black, shift={(0,0,0)}]
    (0, 0, 0) -- (0, 1, 0) -- (0, 1, 1) -- (0, 0, 1) -- cycle;
    \node[circle,fill=black,draw=black,minimum size=1.5mm,inner sep=0pt] at (0,0,0) {};
    \end{tikzpicture}}
& \ \mapsto \
    \myvcenter{\begin{tikzpicture}
    [x={(-0.346410cm,-0.200000cm)}, y={(0.346410cm,-0.200000cm)}, z={(0.000000cm,0.400000cm)}]
    \definecolor{facecolor}{rgb}{0.8,0.8,0.8}
    \fill[thick, fill=facecolor, draw=black, shift={(0,0,0)}]
    (0, 0, 0) -- (1, 0, 0) -- (1, 1, 0) -- (0, 1, 0) -- cycle;
    \fill[thick, fill=facecolor, draw=black, shift={(0,0,0)}]
    (0, 0, 0) -- (0, 1, 0) -- (0, 1, 1) -- (0, 0, 1) -- cycle;
    \fill[thick, fill=facecolor, draw=black, shift={(0,0,0)}]
    (0, 0, 0) -- (0, 0, 1) -- (1, 0, 1) -- (1, 0, 0) -- cycle;
    \node[circle,fill=black,draw=black,minimum size=1.5mm,inner sep=0pt] at (0,0,0) {};
    \end{tikzpicture}}
&
%%% image of 2
    \myvcenter{\begin{tikzpicture}
    [x={(-0.346410cm,-0.200000cm)}, y={(0.346410cm,-0.200000cm)}, z={(0.000000cm,0.400000cm)}]
    \definecolor{facecolor}{rgb}{0.8,0.8,0.8}
    \fill[thick, fill=facecolor, draw=black, shift={(0,0,0)}]
    (0, 0, 0) -- (0, 0, 1) -- (1, 0, 1) -- (1, 0, 0) -- cycle;
    \node[circle,fill=black,draw=black,minimum size=1.5mm,inner sep=0pt] at (0,0,0) {};
    \end{tikzpicture}}
& \ \mapsto \
    \myvcenter{\begin{tikzpicture}
    [x={(-0.346410cm,-0.200000cm)}, y={(0.346410cm,-0.200000cm)}, z={(0.000000cm,0.400000cm)}]
    \definecolor{facecolor}{rgb}{0.8,0.8,0.8}
    \fill[thick, fill=facecolor, draw=black, shift={(0,0,1)}]
    (0, 0, 0) -- (0, 1, 0) -- (0, 1, 1) -- (0, 0, 1) -- cycle;
    \node[circle,fill=black,draw=black,minimum size=1.5mm,inner sep=0pt] at (0,0,0) {};
    \draw[thick, densely dotted] (0,0,0) -- (0,0,1);
    \end{tikzpicture}}
&
%%% image of 2
    \myvcenter{\begin{tikzpicture}
    [x={(-0.346410cm,-0.200000cm)}, y={(0.346410cm,-0.200000cm)}, z={(0.000000cm,0.400000cm)}]
    \definecolor{facecolor}{rgb}{0.8,0.8,0.8}
    \fill[thick, fill=facecolor, draw=black, shift={(0,0,0)}]
    (0, 0, 0) -- (1, 0, 0) -- (1, 1, 0) -- (0, 1, 0) -- cycle;
    \node[circle,fill=black,draw=black,minimum size=1.5mm,inner sep=0pt] at (0,0,0) {};
    \end{tikzpicture}}
& \ \mapsto \
    \myvcenter{\begin{tikzpicture}
    [x={(-0.346410cm,-0.200000cm)}, y={(0.346410cm,-0.200000cm)}, z={(0.000000cm,0.400000cm)}]
    \definecolor{facecolor}{rgb}{0.8,0.8,0.8}
    \fill[thick, fill=facecolor, draw=black, shift={(0,0,1)}]
    (0, 0, 0) -- (0, 0, 1) -- (1, 0, 1) -- (1, 0, 0) -- cycle;
    \node[circle,fill=black,draw=black,minimum size=1.5mm,inner sep=0pt] at (0,0,0) {};
    \draw[thick, densely dotted] (0,0,0) -- (0,0,1);
    \end{tikzpicture}}
\end{align*}
where the black dots in the preimages stand for $\mathbf x$,
and the black dots in the images stand for $\mathbf M_s^{-1}\mathbf x$.
We denote the Euclidean scalar product of to vectors $\mathbf u, \mathbf v\in \mathbb R^3$ by
$\langle\mathbf u, \mathbf v\rangle$,
and we define $\mathcal U$ as the multiset of facets in $\mathcal M^\circ$ whose support is $[\mathbf 0,1]^*\cup[\mathbf 0,2]^*\cup[\mathbf 0,3]^*$ (see Remark~\ref{rem-support}).

\begin{proposition}[\cite{Arn.Ito.01,ABI02,Arn.Berth.Sieg.04}]
\label{prop:ai2}
$\;$
\begin{itemize}
\item For every integer $n$, $\EOSS^n(\mathcal U)$ belongs to $\mathcal M^\circ$, so it can be considered as a subset of $\mathbb R^3$ (see Remark~\ref{rem-support}).

\item For every integer $n$, $\EOSS^n(\mathcal U)$ is a subset of $\EOSS^{n+1}(\mathcal U)$. The increasing sequence of $\EOSS^n(\mathcal U)$ converges and we denote
$$\Sstep=\lim_n \EOSS^n(\mathcal U)=\bigcup_{n\in\mathbb N}\EOSS^n(\mathcal U).$$
This set $\Sstep$ is called the stepped surface.

\item Moreover: 
\[ \Sstep=\bigcup_{i\in\{1,2,3\}} \quad\bigcup_{ 
\substack{
\mathbf x \in \mathbb Z^3 \\
0 \leq \langle \mathbf x, \mathbf v_\beta \rangle < \langle \mathbf e_i, \mathbf v_\beta \rangle }}
[\mathbf x,i]^*.
\] 

\item The restriction $\pi_\beta:\Sstep\rightarrow\mathcal P$ of $\pi_\beta$ to $\Sstep$ is an homeomorphism.

\item This map induces a tiling of the plane $\mathcal P$: we denote this tiling by $\Tstep$. The set of tiles of $\Tstep$ is:

\[ \bigcup_{i\in\{1,2,3\}}
\{\pi_\beta([\mathbf 0,i]^*) + \pi_\beta(\mathbf x) : \mathbf x \in \mathbb Z^3,
0 \leq \langle \mathbf x, \mathbf v_\beta \rangle < \langle \mathbf e_i, \mathbf v_\beta \rangle \}.
\] 
\end{itemize}
\end{proposition}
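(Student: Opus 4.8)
The plan is to reduce every assertion to one structural fact: the dual substitution $\EOSS$ induces a bijection on the facets contained in the set
\[
\mathcal S = \bigcup_{i\in\{1,2,3\}}\ \bigcup_{\substack{\mathbf x\in\mathbb Z^3\\ 0\le\langle\mathbf x,\mathbf v_\beta\rangle<\langle\mathbf e_i,\mathbf v_\beta\rangle}}[\mathbf x,i]^*
\]
appearing in the third item, and then to prove that $\Sstep=\mathcal S$. The single computation driving everything is an eigenvalue identity: since $\mathbf v_\beta$ is a left $\beta$-eigenvector of $\Ms$, one has $\langle\Ms^{-1}\mathbf x,\mathbf v_\beta\rangle=\beta^{-1}\langle\mathbf x,\mathbf v_\beta\rangle$ for all $\mathbf x$. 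Normalizing so that $\langle\mathbf e_1,\mathbf v_\beta\rangle:\langle\mathbf e_2,\mathbf v_\beta\rangle:\langle\mathbf e_3,\mathbf v_\beta\rangle=1:(\beta-1):\beta^{-1}$, and using the Pisot relation $\beta^3=\beta^2+\beta+1$ (equivalently $\beta^{-1}+\beta^{-2}=\beta-1$), this lets me track the scalar $\langle\mathbf x,\mathbf v_\beta\rangle$ exactly through each of the three substitution rules. Note also that $\mathcal U\subseteq\mathcal S$, since $\langle\mathbf 0,\mathbf v_\beta\rangle=0<\langle\mathbf e_i,\mathbf v_\beta\rangle$ for every $i$.

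First I would prove the second item together with the inclusion $\EOSS^n(\mathcal U)\subseteq\mathcal S$. The increasing property is immediate: $\EOSS([\mathbf 0,1]^*)=\mathcal U$, so $\mathcal U\subseteq\EOSS(\mathcal U)$, and applying the monotone map $\EOSS^n$ yields $\EOSS^n(\mathcal U)\subseteq\EOSS^{n+1}(\mathcal U)$, whence the union $\Sstep$ is well defined. For the inclusion I check, rule by rule, that $\EOSS$ sends a facet of $\mathcal S$ to facets of $\mathcal S$: for example a type-$2$ facet produces $[\Ms^{-1}\mathbf x+\mathbf e_3,1]^*$, whose scalar equals $\beta^{-1}\langle\mathbf x,\mathbf v_\beta\rangle+\beta^{-1}$ and hence lands in $[\beta^{-1},1)\subseteq[0,\langle\mathbf e_1,\mathbf v_\beta\rangle)$; the type-$1$ and type-$3$ cases are similar short interval estimates. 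With $\mathcal U\subseteq\mathcal S$, induction gives $\EOSS^n(\mathcal U)\subseteq\mathcal S$ for all $n$. The first item (membership in $\mathcal M^\circ$, i.e.\ no overlaps) then follows once I show that $\EOSS$ sends distinct facets of $\mathcal S$ to facet-disjoint images. This is a finite collision analysis: if some $[\mathbf y,j]^*$ occurred in the images of two facets of $\mathcal S$, the admissible offsets ($\mathbf 0$ or $\mathbf e_3$) together with unimodularity of $\Ms$ force a relation such as $\mathbf x-\mathbf x'=\Ms\mathbf e_3=\mathbf e_1$, which contradicts the scalar bookkeeping ($\langle\mathbf x,\mathbf v_\beta\rangle=\langle\mathbf x',\mathbf v_\beta\rangle+1\ge1$ versus $[\mathbf x,1]^*\in\mathcal S$). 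Since $\mathcal U\in\mathcal M^\circ$ and its facets are distinct elements of $\mathcal S$, induction gives $\EOSS^n(\mathcal U)\in\mathcal M^\circ$.

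The heart of the matter is the reverse inclusion $\mathcal S\subseteq\Sstep$, and I expect this to be the main obstacle. Surjectivity of $\EOSS$ onto the facets of $\mathcal S$ is again a scalar computation: given $[\mathbf y,j]^*\in\mathcal S$ with $j=1$, the only candidate preimages are $[\Ms\mathbf y,1]^*$ and $[\Ms(\mathbf y-\mathbf e_3),2]^*$, and exactly one of them lies in $\mathcal S$ according to whether $\langle\mathbf y,\mathbf v_\beta\rangle$ is below or at least $\beta^{-1}$ (the cases $j=2,3$ are analogous). This produces a unique backward orbit of every facet inside $\mathcal S$. To show the orbit reaches $\mathcal U$, I would project to $\mathcal P$: the backward step sends $\pi_\beta(\mathbf y)$ to $\mathbf h\big(\pi_\beta(\mathbf y)-\pi_\beta(\text{offset})\big)$, where $\mathbf h=\Ms|_{\mathcal P}$ is contracting and the offset is bounded, so the $\mathcal P$-components stay bounded while $\langle\cdot,\mathbf v_\beta\rangle$ remains in the slab; the integer positions therefore lie in a finite set and the backward orbit is eventually periodic. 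A finite inspection of the candidate periodic facets (using that $[\mathbf 0,1]^*$ has itself as unique preimage) forces the orbit to reach $\mathcal U$, giving $\mathcal S\subseteq\bigcup_n\EOSS^n(\mathcal U)=\Sstep$ and hence the third item. The delicate point here is precisely ruling out bounded backward orbits that never meet $\mathcal U$.

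Finally, for the fourth and fifth items I would prove that $\pi_\beta$ restricts to a homeomorphism $\Sstep\to\mathcal P$ by showing that each line $p+\mathbb R\mathbf v_\beta$ ($p\in\mathcal P$) meets $\Sstep$ in exactly one point. Injectivity on a single facet holds because $\mathbf v_\beta$ has strictly positive coordinates and is thus transverse to every coordinate plane (its first, second or third coordinate is nonzero, so it is not contained in the spanning plane of a facet of type $1$, $2$ or $3$). The global ``one intersection'' statement is exactly the content of the inequalities $0\le\langle\mathbf x,\mathbf v_\beta\rangle<\langle\mathbf e_i,\mathbf v_\beta\rangle$, which express that $\Sstep$ is the graph of a section of $\pi_\beta$ over $\mathcal P$; continuity and properness then upgrade the bijection to a homeomorphism. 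The fifth item is the direct image of the facet decomposition under this homeomorphism, whose tiles are precisely $\pi_\beta([\mathbf 0,i]^*)+\pi_\beta(\mathbf x)$ for the admissible $\mathbf x$, so it is a restatement of the third and fourth items.
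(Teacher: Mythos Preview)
The paper does not give its own proof of this proposition: it is quoted verbatim from the literature (Arnoux--Ito, Arnoux--Berth\'e--Ito, Arnoux--Berth\'e--Siegel), so there is no in-paper argument to compare against. Your write-up is essentially a reconstruction of the Arnoux--Ito argument, and the structural reduction---showing that $\EOSS$ restricts to a bijection on the facets of the set $\mathcal S$ and then identifying $\Sstep$ with $\mathcal S$---is the standard route. Your eigenvalue bookkeeping (the left-eigenvector identity $\langle\Ms^{-1}\mathbf x,\mathbf v_\beta\rangle=\beta^{-1}\langle\mathbf x,\mathbf v_\beta\rangle$ and the interval computations for each rule) is correct, and your collision analysis for membership in $\mathcal M^\circ$ is the right one: the only possible coincidence forces $\mathbf x-\mathbf x'=\Ms\mathbf e_3=\mathbf e_1$, which is excluded by the strip inequalities.

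The one point that is not yet a proof is exactly the one you flag: after showing that the backward orbit of any facet in $\mathcal S$ stays in a finite set (contraction of $\mathbf h$ on $\mathcal P$ plus the slab bound on the $\mathbf v_\beta$-coordinate), you still need to exclude periodic backward orbits other than the fixed facet $[\mathbf 0,1]^*$. Saying ``a finite inspection'' is a plan, not an argument. Two ways to close this: (i) actually perform the finite check by listing the integer vectors $\mathbf x$ with $\pi_\beta(\mathbf x)$ in the attractor-neighborhood and $0\le\langle\mathbf x,\mathbf v_\beta\rangle<1$, then verifying that the only backward-periodic one is $\mathbf 0$; or (ii) avoid backward orbits altogether by proving directly that the forward iterates $\EOSS^n(\mathcal U)$ cover arbitrarily large balls of $\mathcal S$ (this is how it is usually done in the cited references). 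For (ii), it suffices to exhibit a single ``seed'' patch $P\subseteq\mathcal S$ with $\mathcal U$ in its interior and $P\subseteq\EOSS^k(\mathcal U)$ for some $k$: then $\EOSS^{nk}(\mathcal U)$ contains a ball whose $\pi_\beta$-radius grows like $|\alpha|^{-n}$, and invariance of $\mathcal S$ finishes the inclusion.

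Your treatment of the fourth and fifth items is also a sketch rather than a proof. The assertion that the strip inequalities ``express that $\Sstep$ is the graph of a section'' needs an argument: one has to show that for each $p\in\mathcal P$ the line $p+\mathbb R\mathbf v_\beta$ meets exactly one facet of $\mathcal S$, i.e.\ that the projections $\pi_\beta([\mathbf x,i]^*)$ (for the admissible $(\mathbf x,i)$) tile $\mathcal P$ with non-overlapping interiors. This is again standard (it is the ``discrete plane'' picture: $\mathcal S$ is the lower boundary of the union of unit cubes $\mathbf x+[0,1]^3$ with $\langle\mathbf x,\mathbf v_\beta\rangle\ge0$), but it does not follow just from the transversality of $\mathbf v_\beta$ to a single facet.
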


We say that a vector $\mathbf x\in\mathbb Z^3$ lies in $\Sstep$ if there exists $i\in\{1,2,3\}$ such that $[\mathbf x,i]^*$ is a subset of $\Sstep$. Proposition~\ref{prop:ai2} implies that the set of vectors lying in $\Sstep$ is precisely $\mathcal V = \mathcal V_1 \cup \mathcal V_2 \cup \mathcal V_3$,
where
$$\mathcal V_i = \{ \mathbf x \in \mathbb Z^3 \;:\; 0 \leq \langle \mathbf x, \mathbf v_\beta \rangle < \langle \mathbf e_i, \mathbf v_\beta \rangle \}.$$

\begin{remark}\label{rem:D_i}
We set $\mathcal D_i=\pi_\beta(\mathcal V_i)$. Then $\mathcal D = \mathcal D_1 \cup \mathcal D_2 \cup \mathcal D_3$ is a Delone set in $\mathcal P$.
The tiling $\Tstep$ is obtained by putting in $\mathcal P$ a tile of type $i$ (i.e. a translated image of $\pi_\beta([\mathbf 0,i]^*)$) at each vector in $\mathcal D_i$.
\end{remark}

\subsection{Link between the tiling $\Tstep$ and the Rauzy fractal}\label{sec:subst}
In this section we recall basic facts about Rauzy fractals and substitutions~\cite{Pyt.02}.
We consider the free monoid $\{1,2,3\}^*$ and the \tdef{Tribonacci substitution}
$s : \{1,2,3\}^* \rightarrow \{1,2,3\}^*$, which is a morphism defined by
\[
s :
1 \mapsto 12 \quad
2 \mapsto 13 \quad
3 \mapsto 1.
\]

Denote by $u = 12131\ldots$ the infinite word on the alphabet $\{1,2,3\}$ such that $s(u)=u$.
In fact, for all $n\in\mathbb N$, $s^n(1)$ is a prefix of $s^{n+1}(1)$,
so that $u=\lim_{+\infty}s^n(1)$.
We denote by $u_i\in\{1,2,3\}$ the $i$-th letter in $u$:
$u = u_1u_2u_3\ldots$ with $u_1=1$, $u_2=2$, $u_3=3$.

Let us define $\Ms$ the \tdef{incidence matrix} of $s$:
its $i$th column vector is equal to $\mathbf P(s(i))$,
where $\mathbf P$ be the abelianization map from $\{1,2,3\}^*$ to $\mathbb{Z}^3$ defined by
$\mathbf P(w) = (|w|_1, |w|_2 ,|w|_3)$ and $|w|_i$ stands for the number of occurrences of $i$ in $w$.
Consistently with the notation of Section~\ref{sect:dual_sub}, we have:
\[
\Ms = \begin{pmatrix}1&1&1\\1&0&0\\ 0&1&0 \end{pmatrix}.
\]

In the following proposition, we go on using the notation introduced in Section~\ref{sect:dual_sub}.
In particular, the map $\mathbf h : \mathcal P \rightarrow \mathcal P$ is the restriction the action of $\Ms$ to contracting eigenplane $\mathcal P$.

\begin{proposition}[\cite{Rau.82, Can.Sieg.01, Arn.Ito.01}]
\label{prop:rauzy}
$\;$
\begin{itemize}
\item The sets
    \begin{itemize}
    \item $\mathcal{R}_i=\overline{\left\{\pi_\beta \big(\mathbf P(u_1\dots u_{j-1})\big) : j\in\mathbb N,\; u_j=i\right\}}$ for $i \in \{1,2,3\}$,
    \item $\mathcal{R}=\overline{\left\{\pi_\beta  \big(\mathbf P(u_1\dots u_{j-1}) \big), j\in\mathbb N\right\}} = \mathcal R_1 \cup \mathcal R_2 \cup \mathcal R_3$
    \end{itemize}
are compact subsets of $\mathcal P$ (where $\overline A$ denotes the closure of a subset $A$ in $\mathcal P$).

\item The $\mathcal{R}_i$'s are the solution of the following IFS:
\[
\begin{cases}
\mathcal{R}_1=\mathbf h\mathcal{R}_1\cup \mathbf h\mathcal{R}_2\cup \mathbf h\mathcal{R}_3 \\
\mathcal{R}_2=\mathbf h\mathcal{R}_1+\pi_\beta \mathbf (e_1) \\
\mathcal{R}_3=\mathbf h\mathcal{R}_2+\pi_\beta \mathbf (e_1).
\end{cases}
\]

\item There exists a tiling of the plane $\mathcal P$, that will be denoted by $\Tfrac$, whose set of tiles is:
\[ \bigcup_{i\in\{1,2,3\}}
\{\mathcal R_i + \pi_\beta(\mathbf x) : \mathbf x \in \mathbb Z^3,
0 \leq \langle \mathbf x, \mathbf v_\beta \rangle < \langle \mathbf e_i, \mathbf v_\beta \rangle \}.
\]

\end{itemize}
\end{proposition}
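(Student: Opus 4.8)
The statement is Proposition~\ref{prop:rauzy}, which asserts three things about the Rauzy fractals $\mathcal R_i$: that they are compact, that they satisfy the stated IFS, and that they produce a tiling $\Tfrac$ of $\mathcal P$. Since this is a classical result attributed to Rauzy, Canterini--Siegel and Arnoux--Ito, the plan is to assemble the proof from the standard machinery attached to Pisot substitutions, keeping track of how each piece uses the contracting action $\mathbf h$ and the projection $\pi_\beta$ on $\mathcal P$. Throughout I work in the contracting plane $\mathcal P$, where $\mathbf h$ is a contraction of ratio $|\alpha|<1$.

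\emph{Compactness.} First I would prove that the sets $\mathcal R_i$ are bounded, hence compact (they are closed by definition, being closures). The key point is that for the prefix $u_1\cdots u_{j-1}$ one has $\mathbf P(u_1\cdots u_{j-1}) = \Ms^{k}\mathbf P(w) + (\text{lower order})$ via the self-similarity $s(u)=u$, so that projecting by $\pi_\beta$ and using that $\pi_\beta$ commutes with $\Ms$ (so $\pi_\beta\mathbf P(s(w)) = \mathbf h\,\pi_\beta\mathbf P(w)$) yields a geometric control: the projected abelianized prefixes stay within a bounded region because the Pisot condition forces the $\pi_\beta$-images to contract. Concretely, I would decompose any prefix along the factorization $u=\lim s^n(1)$, write the projected vertex as a convergent sum of contracted contributions $\sum_k \mathbf h^k(\cdot)$, and bound it by a geometric series in $|\alpha|$.

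\emph{The IFS equations.} This is the heart of the argument and uses the substitutive structure directly. The defining relation $s:1\mapsto12,\ 2\mapsto13,\ 3\mapsto1$ tells us how prefixes of $u$ decompose when we read off $s$: every prefix ending just before an occurrence of a letter of type $i$ in $u$ can be rewritten, using $s(u)=u$, in terms of prefixes ending before occurrences in the preimage word, shifted by the abelianization of the initial block of each image word. Applying $\pi_\beta$ and using $\pi_\beta\mathbf P\circ s = \mathbf h\circ\pi_\beta\mathbf P$, the three productions $1\mapsto12$, $2\mapsto13$, $3\mapsto1$ translate exactly into
\begin{equation*}
\mathcal R_1=\mathbf h\mathcal R_1\cup\mathbf h\mathcal R_2\cup\mathbf h\mathcal R_3,\quad
\mathcal R_2=\mathbf h\mathcal R_1+\pi_\beta(\mathbf e_1),\quad
\mathcal R_3=\mathbf h\mathcal R_2+\pi_\beta(\mathbf e_1),
\end{equation*}
where the translation $\pi_\beta(\mathbf e_1)$ arises because the letter $1$ is the first letter of both $s(1)=12$ and $s(2)=13$, contributing the abelianized offset $\mathbf P(1)=\mathbf e_1$. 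I would show both inclusions: "$\subseteq$" by decomposing an arbitrary prefix via the substitution, and "$\supseteq$" by the reverse construction, then pass to closures (using that $\mathbf h$ is continuous and the sets compact, so the images remain closed). Uniqueness of the attractor follows from the contraction mapping principle applied to the Hutchinson operator on the space of compact subsets of $\mathcal P$ with the Hausdorff metric, which identifies the $\mathcal R_i$ as the unique solution.

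\emph{The tiling property.} Finally I would establish that the translated copies $\mathcal R_i+\pi_\beta(\mathbf x)$ over $\mathbf x\in\mathcal V_i$ tile $\mathcal P$. The natural route is to transport this through the already-established tiling $\Tstep$ of Proposition~\ref{prop:ai2}: the stepped surface projects homeomorphically onto $\mathcal P$ giving the lozenge tiling $\Tstep$ indexed by the same sets $\mathcal V_i$, and the Rauzy fractal pieces $\mathcal R_i$ are precisely the limit shapes obtained by iterating the IFS, occupying the same combinatorial positions. I would argue that the $\mathcal R_i+\pi_\beta(\mathbf x)$ cover $\mathcal P$ (surjectivity, from the union $\mathcal R=\bigcup\mathcal R_i$ together with the density of projected lattice points in the relevant positions) and that distinct tiles have disjoint interiors (the measure-theoretic disjointness, which follows from the fact that the IFS is measure-preserving up to the Jacobian $|\det\mathbf h|$ and the pieces have the correct total area). \textbf{The main obstacle} I anticipate is this last disjointness/aperiodicity claim: unlike the coverings, non-overlap of the fractal tiles is genuinely delicate and, in the literature, rests on the Pisot conjecture machinery (coincidence conditions, or the homeomorphism $\pi_\beta:\Sstep\to\mathcal P$ combined with boundary-measure-zero estimates). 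I would lean on the homeomorphism from Proposition~\ref{prop:ai2} and the known zero-measure of $\partial\mathcal R_i$ to reduce the tiling property for $\Tfrac$ to that already proved for $\Tstep$, rather than reproving coincidence from scratch.
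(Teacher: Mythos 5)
The paper contains no proof of Proposition~\ref{prop:rauzy}: it is recalled as a classical result and attributed to the cited references (Rauzy, Canterini--Siegel, Arnoux--Ito), so there is no internal argument to measure your proposal against --- the paper's ``proof'' is the citation itself. Your sketch is a faithful reconstruction of the standard literature proof and is sound in outline. The compactness argument (decompose each prefix of $u$ along the fixed-point structure as $s^{n}(p_n)s^{n-1}(p_{n-1})\cdots s(p_1)p_0$ with uniformly bounded blocks $p_k$, then bound $\pi_\beta\mathbf P$ of the prefix by the geometric series $\sum_k \mathbf h^k(\cdot)$ using $\pi_\beta\mathbf P\circ s=\mathbf h\circ\pi_\beta\mathbf P$) is exactly the classical one. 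Your derivation of the IFS is correct, with one slip of phrasing: the translation $\pi_\beta(\mathbf e_1)$ in the equations for $\mathcal R_2$ and $\mathcal R_3$ arises because $2$ occurs only as the \emph{second} letter of $s(1)=12$ and $3$ only as the second letter of $s(2)=13$, so the prefix preceding such an occurrence is $s(u_1\cdots u_{k-1})$ followed by the block $1$, of abelianization $\mathbf e_1$; saying ``because $1$ is the first letter of both images'' describes the same combinatorics but obscures where the offset attaches. Likewise, occurrences of $1$ in $u$ are precisely the first letters of the images $s(u_k)$ for all three letter types, giving $\mathcal R_1=\mathbf h(\mathcal R_1\cup\mathcal R_2\cup\mathcal R_3)$.

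One caution on the tiling step, which you rightly flag as the main obstacle: measure-disjointness of the IFS pieces (the Perron--Frobenius area count under $|\det \mathbf h|=1/\beta$) controls overlaps \emph{within} the subdivision of $\mathcal R$, but it does not by itself give exclusivity of the translates $\mathcal R_i+\pi_\beta(\mathbf x)$ over $\mathcal V_i$; for a general Pisot substitution that statement is precisely the Pisot conjecture, and even transporting through Proposition~\ref{prop:ai2} requires knowing that the renormalized patches $\mathbf h^n\big(\pi_\beta(\EOSS^n(\mathcal U))\big)$ converge in Hausdorff metric to the fractal tiles together with a covering-degree argument --- the homeomorphism $\pi_\beta:\Sstep\to\mathcal P$ alone does not rule out overlaps of the limit sets. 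For Tribonacci this is settled by Rauzy's original coincidence-type argument, so your decision to lean on the cited machinery rather than reprove it is legitimate and is, in effect, exactly what the paper does.
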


The set $\mathcal R$ is called the {\bf Rauzy fractal} of $s$
and $\mathcal R_1, \mathcal R_2, \mathcal R_3$ are the {\bf subtiles} of $\mathcal R$.

\begin{remark}\label{rem:Tfrac Ttop}
Comparing Proposition~\ref{prop:rauzy} and Proposition~\ref{prop:ai2},
we see that the positions of the tiles in $\Tfrac$ and $\Tstep$ are given by the same formula.
Indeed, the tiling $\Tfrac$ is obtained by putting in $\mathcal P$ a tile of type $i$ (i.e. a translated image of $\mathcal R_i$) at each vector in $\mathcal D_i$, where the sets $\mathcal D_i$ are precisely the ones of Remark~\ref{rem:D_i}.
This explicits the strong relation between the two tilings $\Tfrac$ and $\Tstep$.
\end{remark}

%%%%%%%%%%%%%%%%%%%%%%%%%%%%%%%%%%%%%%%%%
\section{Link between topological and dual substitutions}
\label{sec:link}

%%%%%%%%%%%%%%%%%%%%%%%%%%%%%%%%%%%%%%%%%
\subsection{The position map}
\label{sec:position}

In this section we define the \emph{position map} $\omega_0$ from the set $\mathfrak{P}$ of paths
of tiles in $\sigma^\infty(C)$ to $\mathbb{Z}^3$.
(See Sections~\ref{sec:add} and~\ref{sec:omega_0} below for precise definitions of $\mathfrak{P}$ and $\omega_0$.)

We use the term \emph{position map} because $\omega_0$ will be used
to give a geometric interpretation of the relative positions of two tiles in a common patch.
This geometric interpretation is given by a vector in $\mathbb{Z}^3$
(the \emph{position} of a tile with respect to another one).
This will simplify the work done in Section~\ref{sec:patchtostepped} where we associate geometric patches of stepped surfaces to abstract topological patches.

%%%%%%%%%%%%%%%%%%%%%%%%%%%%%%%%%%%%%%%%%
\subsubsection{Notation}

Let $\sigma$ be the Tribonacci topological substitution defined in Section \ref{sect:deftoptribo}. We denote by $P$ a patch of $\sigma^\infty(C)$, or possibly $P=\sigma^\infty(C)$.  
\begin{definition}\label{def:path of tiles}
Consider a positive integer $n$. A \tdef{path of tiles} $\gamma$ in $P$ is a sequence $T_0, \ldots, T_{n}$ of tiles of $P$
such that two consecutive tiles $T_i, T_{i+1}$ are different and share (at least) one common edge for all $i \in \{0, \ldots, n-1\}$. The integer $n+1$ is the {\bf length} of the path $\gamma=T_0, \ldots, T_{n}$. The set $\{T_0,\dots, T_n\}\subset P$ is called the {\bf support of} $\gamma$ in $P$.
When $T_0=T_{n}$,  $\gamma$ is a {\bf loop of tiles}. The integer $n$ is the {\bf length} of the loop $\gamma=T_0, \ldots, T_{n}$. 
The path $\gamma=T_0, \ldots, T_{n}$ and the path $\gamma'=T'_0, \ldots, T'_{m}$ can be {\bf concatenated} if $T'_0=T_n$.
The {\bf concatenation} of these paths is the path of tiles $\gamma\gamma'=T_0,\ldots, T_n, T'_1,\ldots, T'_m$. 
\end{definition}

Let $\gamma$  be a loop of tiles in $\sigma^\infty(C)$. 
Among the connected components of the complementary of the support of $\gamma$, there is exactly one, denoted by $\mathsf{C}_\infty(\gamma)$, which contains an infinite number of tiles. We denote by $\mathsf{C}_0(\gamma)$ the complement of $\mathsf{C}_\infty(\gamma)$: it is a patch, in particular it is homeomorphic to a disc. Alternatively $\mathsf{C}_0(\gamma)$ is the smallest subpatch of $\sigma^\infty(C)$ containing the support of $\gamma$.
We define the \tdef{area of $\gamma$} to be the number of tiles in $\mathsf{C}_0(\gamma)$:
$$\area(\gamma)=|\mathsf{C}_0(\gamma)|.$$

Now we define an equivalence relation on paths of tiles which will define a {\bf protopath of tiles}:
The path $\gamma=T_0, \ldots, T_{n}$ and the path $\gamma'=T'_0, \ldots, T'_{m}$ are equivalent if
\begin{itemize}
\item $m=n$
\item For every $i\in\{0,\dots,n\}$, $T_i$ and $T'_i$ have the same prototile type.
\item The gluing edges of $T_i$ and $T_{i+1}$ have the same type as the gluing edges of  $T'_i$ and $T'_{i+1}$.
\end{itemize}
In the same way we define the notion of {\bf protoloop}.
The notions of concatenation, area and length naturally extend to protopaths. 

%%%%%%%%%%%%%%%%%%%%%%%%%%%%%%%%%%%%%%%%%
\subsubsection{Additivity}
\label{sec:add}
%%%%%%%%%%%%%%%%%%%%%%%%%%%%%%%%%%%%%%%%%

Let $P$ be  patch in $\sigma^\infty(C)$. By definition, $P$ is homeomorphic to a disc and its boundary is homeomorphic to the circle $S^1$. 
Let $T$ be a tile in $P$, the {\bf wreath} of $T$ in $P$ is the subset of $P\smallsetminus\{T\}$ made of tiles that have at least one vertex in common with $T$. We denote it by $\wreath_P(T)$. A {\bf cut tile} of $P$ is a tile whose wreath in $P$ is not connected.

Let $T$ be a cut tile of $P$. Then $P\smallsetminus T$ has at least $2$ connected components, and each of these components is a patch. 

\begin{lemma}\label{lem:topo-couronne}
Let $P$ be a finite patch in $\sigma^\infty(C)$. 
There exists one tile of $P$ which is not a cut tile and has one edge in the boundary of $P$.
\end{lemma}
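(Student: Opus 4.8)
The plan is to reduce the statement to finding a boundary tile $T$ whose complement $P\smallsetminus T$ is connected. Once such a $T$ is produced, it cannot be a cut tile: this is exactly the contrapositive of the already-established fact that a cut tile disconnects $P$ into at least two components. So the whole problem becomes: exhibit a tile with an edge in $\partial P$ whose removal does not disconnect $P$.

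The key topological observation I would establish first concerns how a single tile sits against the boundary. Since $P$ is homeomorphic to a disc, $\partial P$ is a circle, and the intersection $T\cap\partial P$ of a tile $T$ with the boundary is a disjoint union of $m\geq 0$ arcs, each a connected edge-path of $\partial P$. When $m=1$, removing $T$ amounts to taking a \emph{bite} out of the boundary of the disc, so $P\smallsetminus T$ remains connected and $T$ is not a cut tile. When $m\geq 2$, the $m$ interior arcs of $\partial T$ cut $P$ into exactly $m$ sub-patches, which I will call the \emph{flaps} of $T$: the flap lying between two cyclically consecutive boundary arcs of $T$ meets $\partial P$ in a single arc (the boundary \emph{gap} between those two arcs), being bounded by that gap together with one interior arc of $\partial T$. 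In particular, the boundary tiles that are cut tiles are precisely those with $m\geq 2$, and each of them has at least one flap with strictly fewer tiles than $P$.

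With this description in hand I would argue by minimality. The case $|P|=1$ (a single tile, with all edges on $\partial P$ and empty wreath) is immediate, so assume $P$ has at least two tiles and, for contradiction, that every boundary tile of $P$ is a cut tile. Then flaps exist, and among all pairs $(T,Q)$ where $T$ is a boundary tile and $Q$ is a flap of $T$, I choose one minimizing the number of tiles $|Q|$. Writing $Q\cap\partial P=\delta$ for the corresponding single boundary arc, the arc $\delta$ contains at least one boundary edge, which is owned by a unique tile $T''$; since that edge lies in $\partial Q$, the tile $T''$ lies in $Q$. Now $T''$ has an edge in $\delta\subseteq\partial P$, so $T''$ is a boundary tile of $P$ and hence, by assumption, a cut tile; but $T''\subseteq Q$ forces \emph{all} boundary edges of $T''$ to lie inside $\delta$, so $T''$ meets $\partial P$ in at least two arcs, all contained in $\delta$. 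Consequently $T''$ has a flap $Q''$ bounded by an interior arc of $T''$ and a proper subarc of $\delta$; this flap satisfies $Q''\subseteq Q\smallsetminus T''$, whence $|Q''|<|Q|$, contradicting the minimality of $(T,Q)$. This contradiction yields a boundary tile that is not a cut tile.

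I expect the main obstacle to be making the flap description rigorous, namely proving that a tile meeting $\partial P$ in $m$ arcs separates $P$ into exactly $m$ sub-patches, each meeting $\partial P$ in a single arc. This is precisely where the disc structure of $P$ is used, via a Jordan-curve type argument showing that each interior arc of $\partial T$, together with the boundary gap facing it, bounds a sub-disc of $P$; everything else is then bookkeeping controlled by the number of tiles.
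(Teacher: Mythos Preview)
Your argument is correct and follows essentially the same idea as the paper's proof: a finite descent into strictly smaller sub-patches carved off by boundary tiles that are cut tiles, terminating by finiteness. The paper phrases this as an iterative process (pick a boundary tile, if it is a cut tile pass to a component of its complement, repeat), while you phrase the same descent as a minimal-counterexample argument on flap size and make the underlying Jordan-curve topology explicit through your ``flap'' description; the logical content is the same.
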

\begin{proof}
Pick a tile $T_0$ in $P$ which has a vertex in the boundary of $P$.
If $T_0$ is not a cut tile, we are done.
Otherwise, because $P$ is homeomorphic to a disk, $P\smallsetminus T_0$ has at least two connected components: we choose one of them that we denote by $P_1$.
Pick a tile $T_1$ in $P_1$ which has a vertex in the boundary of $P$.
If $T_1$ is not a cut tile, we are done.
Otherwise, $P\smallsetminus T_1$ has at least two connected components, and at least one of them is included in $P_1$:
we choose one of these ones, that we denote by $P_2$. If every tile with one edge in the boundary is a cut-tile we obtain an infinite number of nested connected components which is a contradiction with the fact that $P$ contains a finite number of tiles.
\end{proof}

Let $\mathfrak{P}$ be the set of protopaths of tiles in $\sigma^\infty(C)$. A map $\omega: \mathfrak{P}\rightarrow \mathbb{Z}^3$ is \tdef{additive} if for every $\gamma,\gamma'\in\mathfrak{P}$ that can be concatenated,
we have $\omega(\gamma\gamma')=\omega(\gamma)+\omega(\gamma')$.
Hence, an additive map $\omega: \mathfrak{P}\rightarrow \mathbb{Z}^3$ is uniquely defined by the image of the protopaths of length 2.

\begin{definition}
Let $\mathfrak{P}_0\subseteq\mathfrak{P}$ be the subset consisting of protoloops 
$\gamma=T_0,\dots,T_n$ (with $T_0=T_n$)
of tiles in $\sigma^\infty(C)$  such that
\begin{itemize}
  \item for every $i\in\{1,\dots,n-1\}$, $T_i\in\wreath_{\mathsf{C}_0(\gamma)}(T_0)$,
  \item for every $i\neq j\in\{1,\dots,n-1\}$, $T_i\neq T_j$.
\end{itemize}
We notice that $\mathfrak{P}_0$ contains all protoloops of lentgh 2.
\end{definition}

\begin{lemma}\label{lem:P0 fini}
The set $\mathfrak{P}_0$ is finite.
\end{lemma}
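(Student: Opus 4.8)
The plan is to bound two quantities: the length of the protoloops belonging to $\mathfrak{P}_0$, and the number of combinatorial types available once the length is fixed. Both bounds rest on the fact, established earlier, that the valences in $\sigma^\infty(C)$ are uniformly bounded.

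First I would establish a uniform bound on the size of wreaths. Every prototile $A$, $B$, $C$ has at most $10$ vertices (the decagon $C$ being the largest), and by Lemma~\ref{lem:val-bounded} together with Remark~\ref{rem:valence-sommet} every vertex of $\sigma^\infty(C)$ has valence at most $3$. Hence at each vertex of a tile $T_0$ there are at most two tiles other than $T_0$, so a crude count gives $|\wreath_{\sigma^\infty(C)}(T_0)|\leq 20$ for every tile $T_0$. Moreover, for any subpatch $P\subseteq\sigma^\infty(C)$ containing $T_0$ one has $\wreath_P(T_0)\subseteq\wreath_{\sigma^\infty(C)}(T_0)$, so the constant $N=20$ is a bound valid uniformly over all patches.

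Next I would bound the length of the protoloops. Let $\gamma=T_0,\dots,T_n\in\mathfrak{P}_0$. By the very definition of $\mathfrak{P}_0$, the intermediate tiles $T_1,\dots,T_{n-1}$ are pairwise distinct and all lie in $\wreath_{\mathsf{C}_0(\gamma)}(T_0)\subseteq\wreath_{\sigma^\infty(C)}(T_0)$. Therefore $n-1\leq N$, so $n\leq N+1=21$: the lengths of the protoloops in $\mathfrak{P}_0$ are bounded. Finally I would conclude by a counting argument: a protoloop of length $n$ is an equivalence class determined by the sequence of prototile types $\big(\type(T_0),\dots,\type(T_n)\big)$ together with the sequence of types of the gluing edges between consecutive tiles. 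Since there are only three prototile types and finitely many edge types in total, for each fixed $n\in\{2,\dots,21\}$ there are only finitely many such combinatorial data, hence finitely many protoloops of that length; summing over the finitely many admissible values of $n$ shows that $\mathfrak{P}_0$ is finite.

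The main obstacle is really the first step: phrasing the wreath bound so that it is genuinely uniform (independent of $T_0$ and of the ambient patch), which is exactly where the bounded-valence conclusion of Lemma~\ref{lem:val-bounded} is indispensable. Once this uniform bound is in hand, the length bound follows immediately from the distinctness-and-wreath condition built into the definition of $\mathfrak{P}_0$, and the finiteness of combinatorial types of bounded length is routine.
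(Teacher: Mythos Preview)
Your argument is correct and follows the same route as the paper: bounded valence in $\sigma^\infty(C)$ forces a uniform bound on wreath sizes, hence on the lengths of loops in $\mathfrak{P}_0$, and then finiteness follows since protopaths of bounded length are determined by finitely many combinatorial data. The paper's own proof is a one-line sketch (``the valence of every vertex in $\sigma^\infty(C)$ is bounded, we deduce that the cardinality is finite''), so you have simply made explicit the steps it leaves to the reader.
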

\begin{proof}
The valence of every vertex in $\sigma^\infty(C)$ is bounded (by 3, see Lemma \ref{lem:val-bounded}), we deduce that the cardinlity is finite.
\end{proof}

It is possible to produce an explicit list of the elements in $\mathfrak{P}_0$.
We detail below all the elements of $\mathfrak{P}_0$ of length 3.

\begin{center}
\myvcenter{\begin{tikzpicture}[x={(-0.216506cm,-0.125000cm)}, y={(0.216506cm,-0.125000cm)}, z={(0.000000cm,0.250000cm)}]
\tuileA{A}{(0,0,0)}{white}{}{}{}{}{}{}
\tuileB{B}{(0,0,0)}{white}{}{}{}{}{}{}
\tuileC{C}{(0,0,0)}{white}{}{}{}{}{}{}{}{}{}{}
\end{tikzpicture}} \
\myvcenter{\begin{tikzpicture}[x={(-0.216506cm,-0.125000cm)}, y={(0.216506cm,-0.125000cm)}, z={(0.000000cm,0.250000cm)}]
\tuileA{A}{(0,0,0)}{white}{}{}{}{}{}{}
\tuileB{B}{(0,2,-3)}{white}{}{}{}{}{}{}
\tuileC{C}{(2,-1,-2)}{white}{}{}{}{}{}{}{}{}{}{}
\end{tikzpicture}} \
\myvcenter{\begin{tikzpicture}[x={(-0.216506cm,-0.125000cm)}, y={(0.216506cm,-0.125000cm)}, z={(0.000000cm,0.250000cm)}]
\tuileA{A}{(0,0,0)}{white}{}{}{}{}{}{}
\tuileB{B}{(0,0,0)}{white}{}{}{}{}{}{}
\tuileC{C}{(2,-1,-2)}{white}{}{}{}{}{}{}{}{}{}{}
\end{tikzpicture}} \
\myvcenter{\begin{tikzpicture}[x={(-0.216506cm,-0.125000cm)}, y={(0.216506cm,-0.125000cm)}, z={(0.000000cm,0.250000cm)}]
\tuileA{A}{(0,-2,3)}{white}{}{}{}{}{}{}
\tuileB{B}{(0,0,0)}{white}{}{}{}{}{}{}
\tuileC{C}{(0,-2,3)}{white}{}{}{}{}{}{}{}{}{}{}
\end{tikzpicture}} \
\myvcenter{\begin{tikzpicture}[x={(-0.216506cm,-0.125000cm)}, y={(0.216506cm,-0.125000cm)}, z={(0.000000cm,0.250000cm)}]
\tuileB{B}{(0,0,0)}{white}{}{}{}{}{}{}
\tuileB{B}{(-1,0,2)}{white}{}{}{}{}{}{}
\tuileC{C}{(0,0,0)}{white}{}{}{}{}{}{}{}{}{}{}
\end{tikzpicture}}% \
\myvcenter{\begin{tikzpicture}[x={(-0.216506cm,-0.125000cm)}, y={(0.216506cm,-0.125000cm)}, z={(0.000000cm,0.250000cm)}]
\tuileB{B}{(0,0,0)}{white}{}{}{}{}{}{}
\tuileB{B}{(-1,0,2)}{white}{}{}{}{}{}{}
\tuileC{C}{(1,-3,3)}{white}{}{}{}{}{}{}{}{}{}{}
\end{tikzpicture}} \
\myvcenter{\begin{tikzpicture}[x={(-0.216506cm,-0.125000cm)}, y={(0.216506cm,-0.125000cm)}, z={(0.000000cm,0.250000cm)}]
\tuileB{B}{(-1,0,2)}{white}{}{}{}{}{}{}
\tuileC{C}{(0,0,0)}{white}{}{}{}{}{}{}{}{}{}{}
\tuileC{C}{(-1,0,2)}{white}{}{}{}{}{}{}{}{}{}{}
\end{tikzpicture}} \
\myvcenter{\begin{tikzpicture}[x={(-0.216506cm,-0.125000cm)}, y={(0.216506cm,-0.125000cm)}, z={(0.000000cm,0.250000cm)}]
\tuileB{B}{(-2,3,-1)}{white}{}{}{}{}{}{}
\tuileC{C}{(0,0,0)}{white}{}{}{}{}{}{}{}{}{}{}
\tuileC{C}{(-1,0,2)}{white}{}{}{}{}{}{}{}{}{}{}
\end{tikzpicture}}% \
\myvcenter{\begin{tikzpicture}[x={(-0.216506cm,-0.125000cm)}, y={(0.216506cm,-0.125000cm)}, z={(0.000000cm,0.250000cm)}]
\tuileB{B}{(-2,1,2)}{white}{}{}{}{}{}{}
\tuileC{C}{(0,0,0)}{white}{}{}{}{}{}{}{}{}{}{}
\tuileC{C}{(0,-2,3)}{white}{}{}{}{}{}{}{}{}{}{}
\end{tikzpicture}} \
\myvcenter{\begin{tikzpicture}[x={(-0.216506cm,-0.125000cm)}, y={(0.216506cm,-0.125000cm)}, z={(0.000000cm,0.250000cm)}]
\tuileB{B}{(0,0,0)}{white}{}{}{}{}{}{}
\tuileC{C}{(0,0,0)}{white}{}{}{}{}{}{}{}{}{}{}
\tuileC{C}{(0,-2,3)}{white}{}{}{}{}{}{}{}{}{}{}
\end{tikzpicture}} \
\myvcenter{\begin{tikzpicture}[x={(-0.216506cm,-0.125000cm)}, y={(0.216506cm,-0.125000cm)}, z={(0.000000cm,0.250000cm)}]
\tuileC{C}{(0,0,0)}{white}{}{}{}{}{}{}{}{}{}{}
\tuileC{C}{(-1,2,-1)}{white}{}{}{}{}{}{}{}{}{}{}
\tuileC{C}{(-1,0,2)}{white}{}{}{}{}{}{}{}{}{}{}
\end{tikzpicture}}% \
\myvcenter{\begin{tikzpicture}[x={(-0.216506cm,-0.125000cm)}, y={(0.216506cm,-0.125000cm)}, z={(0.000000cm,0.250000cm)}]
\tuileC{C}{(0,0,0)}{white}{}{}{}{}{}{}{}{}{}{}
\tuileC{C}{(-1,2,-1)}{white}{}{}{}{}{}{}{}{}{}{}
\tuileC{C}{(0,2,-3)}{white}{}{}{}{}{}{}{}{}{}{}
\end{tikzpicture}}
\end{center}

\begin{lemma}
\label{lem:loop:omega}
Let $\omega: \mathfrak{P}\rightarrow \mathbb{Z}^3$ an additive map such that
$\omega$ vanishes on the elements of $\mathfrak{P}_0$ . Then $\omega$ vanishes on every protoloop in $\sigma^\infty(C)$.
\end{lemma}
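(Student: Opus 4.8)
The plan is to argue by induction on the area $\area(\gamma)$ of the protoloop $\gamma$, using Lemma~\ref{lem:topo-couronne} to peel off a boundary tile at each step and the hypothesis that $\omega$ kills every element of $\mathfrak{P}_0$ to control how $\omega(\gamma)$ changes under this operation. Conceptually this is a discrete avatar of the fact that a closed $1$-form integrates to $0$ along every loop of a simply connected surface: an additive $\omega$ is the discrete integral of a $\mathbb{Z}^3$-valued $1$-cochain on the dual graph of $\sigma^\infty(C)$, the vanishing on $\mathfrak{P}_0$ says this cochain kills the boundary of every ``$2$-cell'' of the dual complex (i.e.\ every loop turning once around a vertex of $\sigma^\infty(C)$), and since $\sigma^\infty(C)\cong\mathbb{R}^2$ is simply connected such a cocycle integrates to $0$ along every cycle.

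First I would record two reductions coming from additivity. Since every protoloop of length $2$ lies in $\mathfrak{P}_0$, the hypothesis gives $\omega(T,T')+\omega(T',T)=\omega(T,T',T)=0$, so $\omega$ is antisymmetric on length-$2$ protopaths; by additivity this yields $\omega(\delta^{-1})=-\omega(\delta)$ for every protopath $\delta$, and hence $\omega(\alpha\beta\alpha^{-1})=\omega(\beta)$, so that the value of $\omega$ on a loop does not depend on its basepoint. Using antisymmetry again, any backtrack $\dots,T_{i-1},T_i,T_{i-1},\dots$ inside $\gamma$ may be deleted without changing $\omega(\gamma)$ and without increasing the area; so I may assume $\gamma$ is reduced.

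Next I would set up the induction on $\area(\gamma)=|\mathsf{C}_0(\gamma)|$. For the base case, a reduced loop enclosing the least possible number of tiles turns exactly once around a single vertex $v$ of $\sigma^\infty(C)$; since every vertex has valence $2$ or $3$ (Remark~\ref{rem:valence-sommet}), such a loop is either a length-$2$ backtrack (already excluded) or a length-$3$ loop through the three tiles incident to $v$, and in the latter case it belongs to $\mathfrak{P}_0$, so $\omega(\gamma)=0$ by hypothesis. For the inductive step, set $D=\mathsf{C}_0(\gamma)$ and apply Lemma~\ref{lem:topo-couronne} to obtain a tile $T$ of $D$ that is not a cut tile and carries an edge of $\partial D$. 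By non-cutness, $D'=D\smallsetminus\{T\}$ is again a subpatch homeomorphic to a disc, with $|D'|=\area(\gamma)-1$; and by minimality of $D=\mathsf{C}_0(\gamma)$ the tile $T$ must lie in the support of $\gamma$ (otherwise $D'$ would be a smaller patch still containing that support). Rerouting inside $D'$ the passage(s) of $\gamma$ through $T$ produces a loop $\gamma'$ with $\mathsf{C}_0(\gamma')=D'$, so $\area(\gamma')<\area(\gamma)$. Up to the basepoint-free remark above, the concatenation $(\gamma')^{-1}\gamma$ is a loop encircling $T$ once, which factors as a concatenation of the elementary loops turning around the successive vertices of $T$; each of these lies in $\mathfrak{P}_0$. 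Hence $\omega(\gamma)-\omega(\gamma')=\omega\big((\gamma')^{-1}\gamma\big)=0$, and the induction hypothesis $\omega(\gamma')=0$ gives $\omega(\gamma)=0$.

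The main obstacle is the bookkeeping in this last move: one must check carefully that peeling the non-cut tile $T$ really replaces the piece of $\gamma$ reading $T$ by a path inside $D'$ whose combination with the old piece is a genuine product of elements of $\mathfrak{P}_0$ — in other words, that every local loop produced around a vertex or around $T$ is one of the finitely many configurations enumerated in $\mathfrak{P}_0$. This is exactly where the bounded valence (Lemma~\ref{lem:val-bounded}) and the finiteness of $\mathfrak{P}_0$ (Lemma~\ref{lem:P0 fini}) enter: there are only finitely many local pictures, and all of them are in $\mathfrak{P}_0$ by construction. A secondary point to handle with care is that $\omega$ is defined on \emph{protopaths}, so all the manipulations above must be read at the level of protopath types; this causes no trouble, since the elementary moves are local and the relevant types are precisely those recorded in $\mathfrak{P}_0$.
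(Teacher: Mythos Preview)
Your approach is essentially the same as the paper's: induction on $\area(\gamma)$, peel off a non-cut boundary tile $T$ of $\mathsf{C}_0(\gamma)$ via Lemma~\ref{lem:topo-couronne}, and show that the resulting ``difference loop'' is killed by the hypothesis on $\mathfrak{P}_0$.

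The one place where you work harder than necessary is the inductive step. You reroute $\gamma$ around $T$ to get $\gamma'$, then argue that $(\gamma')^{-1}\gamma$ factors into length-$3$ vertex loops, each in $\mathfrak{P}_0$. The paper instead treats each occurrence $T_{i-1},T_i=T,T_{i+1}$ of $T$ in $\gamma$ separately, replaces it by a simple path $T_{i-1},T'_1,\dots,T'_d,T_{i+1}$ inside $\wreath_{\mathsf{C}_0(\gamma)}(T)$, and observes that the single loop $\gamma''=T,T_{i-1},T'_1,\dots,T'_d,T_{i+1},T$ is \emph{itself} an element of $\mathfrak{P}_0$ by definition (the intermediate tiles are distinct and all lie in the wreath of the base tile $T$). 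So no further factorization into vertex loops is required: you are under-using the definition of $\mathfrak{P}_0$, which is not restricted to loops of length~$2$ or~$3$. This also dissolves the ``bookkeeping obstacle'' you flag at the end --- handling occurrences one at a time and matching $\gamma''$ directly to the definition of $\mathfrak{P}_0$ avoids all of it.
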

\begin{proof}
The proof is by induction on the area of the protoloop of tiles $\gamma$ in $\sigma^\infty(C)$.
According to Definition~\ref{def:path of tiles}, a loop of tiles has length a least 2,
and thus also area at least 2.
Moreover, a loop of tiles $\gamma$ with area 2 is the concatenation of a certain number of copies of  a same loop of tiles of length 2 $\gamma'=T_0,T_1,T_0$.
Since any loop of length 2 is in $\mathfrak{P}_0$, we get that $\omega(\gamma')=0$.
By additivity of $\omega$, we derive that $\omega(\gamma)=0$.

Suppose that $\omega$ vanishes on every loop in $\sigma^\infty(C)$ of area at most $k$.
Let $\gamma=T_0,\dots,T_n$ be a loop of area $k+1$.
By Lemma~\ref{lem:topo-couronne} there exists a tile $T$ in $\mathsf{C}_{0}(\gamma)$
which is not a cut-tile and has one edge in the boundary of $\mathsf{C}_{0}(\gamma)$.
The tile $T$ may occur several times in $\gamma$, and for each occurrence we will  successively act as follows.

Let $T_i=T$ be an occurrence of $T$ in $\gamma$.
\begin{itemize}
  \item If $T_{i-1}=T_{i+1}$, we set $\gamma'=T_0,\dots,T_{i-1}=T_{i+1},\dots,T_n$.
Then by additivity of $\omega$, 
$$\omega(\gamma)=\omega(\gamma')+\omega(T_{i-1},T_i)+\omega(T_{i},T_{i+1})
=\omega(\gamma').$$
  \item If $T_{i-1}\neq T_{i+1}\in\wreath_{\mathcal{C}_0(\gamma)}(T)$, see Figure~\ref{fig:additivity}.
  Since $T$ is not a cut tile of $\mathcal{C}_0(\gamma)$, 
  there exists a path of tiles $T_{i-1},T'_1,\dots, T'_d,T_{i+1}$
  in $\wreath_{\mathcal{C}_0(\gamma)}(T)$ joining $T_{i-1}$ and $T_{i+1}$.
  Then 
  $$\gamma''=T,T_{i-1},T'_1,\dots, T'_d,T_{i+1},T$$ 
  is a loop of tiles, 
  and since $T$ has at least an edge in the boundary of $\mathcal{C}_0(\gamma)$,
  we see that $\gamma''\in\mathfrak{P}_0$. In particular, $\omega(\gamma'')=0$.
  We set 
  $$\gamma'=T_0,\dots,T_{i-1},T'_1,\dots, T'_d,T_{i+1},\dots,T_n.$$
  Then by additivity of $\omega$, 
  $$\omega(\gamma)=\omega(\gamma')+\omega(\gamma'')=\omega(\gamma').$$ 
\end{itemize}  

After proceeding as above for each occurrence of $T$ in $\gamma$, 
we end up with a loop of tiles $\gamma_0$ such that
$\omega(\gamma)=\omega(\gamma_0)$
and $\area(\gamma_0)\leq k$ (since the support of $\gamma_0$ is included in
$\mathcal{C}_0(\gamma)\smallsetminus\{T\}$).
We conclude using the induction hypothesis on $\gamma_0$.
\end{proof}

\begin{figure}[ht]
\centering
\begin{tikzpicture}[x={(-0.5196cm,-0.3cm)}, y={(0.5196cm,-0.3cm)}, z={(0cm,0.6cm)}]
\fill[fill=black!20,draw=black,thick]
(3,-1,0) -- (-1,0,1) -- (-2,0,1) -- (-2,1,1) -- (-3,1,1) -- (-3,1,1) -- (-3,2,1) -- (-3,2,0) -- (-3,3,0) -- (-3,3,-1) -- (-2,3,-1) -- (-2,3,-2) -- (-1,3,-2) -- (-1,3,-3) -- (0,3,-3) -- (0,2,-3) -- (1,2,-3) -- (1,1,-3) -- (2,1,-3) -- (2,0,-3) -- (3,0,-3) -- (3,-1,-3) -- (4,-1,-3) -- (4,-2,-3) -- (4,-2,-2) -- (3,-2,-2) -- (3,-2,-1) -- cycle;
\tileA{}{}{}{}{}{}{}{(0,1,1)}{white}
\tileC{}{}{}{}{}{}{}{}{}{}{}{(1,0,2)}{white}
\tileC{}{}{}{}{}{}{}{}{}{}{}{(1,-1,0)}{white}
\tileA{}{}{}{}{}{}{}{(1,1,0)}{white}

\draw[line width=0.8mm, red, rounded corners, dotted, shift={(0,0.1,-0.1)}]
(2,-1,0) -- (1,0,0) -- (0,1,0) -- (-1,1,1) -- (-1,3,-1) -- (0,3,-2) -- (1,2,-2) -- (2,1,-2) -- (3,0,-2) -- (3,-1,-1) -- (2,-1,0);
\draw[line width=0.8mm, blue, rounded corners, dotted, shift={(0,-0.1,0.1)}]
(2,-1,0) -- (1,0,0) -- (0,1,0) -- (-1,1,1) -- (1,-1,1) -- cycle;
\node[blue,above] at (0,0,1) {$\gamma''$};
\node[red,right] at (-1,2,0) {$\gamma'$};
\node[left] at (-3,2,0.5) {$P$};
\end{tikzpicture}
\caption{Scheme of proof of Lemma \ref{lem:loop:omega} }
\label{fig:additivity}
\end{figure}
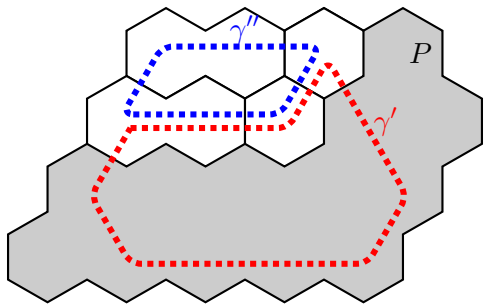

\subsubsection{The position map $\omega_0$}
\label{sec:omega_0}

\begin{definition}
\label{def:omega0}
First we define $\omega_0$ on the set of protopaths of length $2$ in $\sigma^\infty(C)$.
They form a finite set due to the heredity graph of edges.
In Figure~\ref{fig:omega_two} we explicitly give this set and define this list and define $\omega_0$ on it.
For each protopath $\gamma=(T_0,T_1)$ of Figure~\ref{fig:omega_two}, $T_0$ is the white tile.
Moreover, we set $\omega_0(T_1,T_0)=-\omega_0(T_0,T_1)$.

We are now ready to define the map $\omega_0:\mathfrak{P}\rightarrow \mathbb{Z}^3$.
For every protopath $\gamma=T_1,T_2,\dots,T_n$ of length $n\geq 2$, we set
\[
\omega_0(\gamma)=\sum_{i=1}^{n-1} \omega_0(T_{i},T_{i+1}).
\]

Finally, to make sure that $\omega_0$ vanishes on protoloops of $\sigma^\infty(C)$,
it remains to check that the map $\omega_0$ vanishes on the elements of $\mathfrak{P}_0$
(thanks to Lemma~\ref{lem:loop:omega}).
This a finite process, since Lemma~\ref{lem:P0 fini} ensures that $\mathfrak{P}_0$ is finite.
We detail below an instance of the kind of easy computations that have to be carried on:
\begin{align*}
\omega_0\left(
\myvcenter{\begin{tikzpicture}[x={(-0.216506cm,-0.125000cm)}, y={(0.216506cm,-0.125000cm)}, z={(0.000000cm,0.250000cm)}]
\tuileB{B}{(0,0,0)}{white}{}{}{}{}{}{}
\tuileC{C}{(2,-3,1)}{white}{}{}{}{}{}{}{}{}{}{}
\tuileC{C}{(1,-3,3)}{white}{}{}{}{}{}{}{}{}{}{}
\draw[->, draw=red, thick] (1.5,-2.6,0) -- (1,-2,2.5);
\draw[->, draw=red, thick] (1,-1.6,2.5) -- (1,-1,1.3);
\draw[->, draw=red, thick] (1,-1,1.1) -- (1.5,-2.2,0);
\end{tikzpicture}}%
\right)
    &=
\omega_0\left(
\myvcenter{\begin{tikzpicture}[x={(-0.216506cm,-0.125000cm)}, y={(0.216506cm,-0.125000cm)}, z={(0.000000cm,0.250000cm)}]
\tuileC{C}{(2,-3,1)}{white}{}{}{}{}{}{}{}{}{}{}
\tuileC{C}{(1,-3,3)}{black!20}{}{}{}{}{}{}{}{}{}{}
\end{tikzpicture}}
\right)
+
\omega_0\left(
\myvcenter{\begin{tikzpicture}[x={(-0.216506cm,-0.125000cm)}, y={(0.216506cm,-0.125000cm)}, z={(0.000000cm,0.250000cm)}]
\tuileB{B}{(0,0,0)}{black!20}{}{}{}{}{}{}
\tuileC{C}{(1,-3,3)}{white}{}{}{}{}{}{}{}{}{}{}
\end{tikzpicture}}
\right)
+
\omega_0\left(
\myvcenter{\begin{tikzpicture}[x={(-0.216506cm,-0.125000cm)}, y={(0.216506cm,-0.125000cm)}, z={(0.000000cm,0.250000cm)}]
\tuileB{B}{(0,0,0)}{white}{}{}{}{}{}{}
\tuileC{C}{(2,-3,1)}{black!20}{}{}{}{}{}{}{}{}{}{}
\end{tikzpicture}}
\right)
\\
    &= (-1,0,2) + (0,2,-3) + (1,-2,1) = (0,0,0).
\end{align*}
\end{definition}

\begin{figure}[ht!]
\begin{align*}
%%%%%%%%%%%%%%%%%%
%%% AB
\myvcenter{\begin{tikzpicture}[x={(-0.216506cm,-0.125000cm)}, y={(0.216506cm,-0.125000cm)}, z={(0.000000cm,0.250000cm)}]
\tuileA{A}{(0,0,0)}{white}{}{}{}{}{}{}
\tuileB{B}{(0,0,0)}{black!20}{}{}{}{}{}{}
\end{tikzpicture}}
& \ \mapsto \
\svect{0}{-1}{1} &
%%% AB
\myvcenter{\begin{tikzpicture}[x={(-0.216506cm,-0.125000cm)}, y={(0.216506cm,-0.125000cm)}, z={(0.000000cm,0.250000cm)}]
\tuileA{A}{(0,0,0)}{white}{}{}{}{}{}{}
\tuileB{B}{(0,2,-3)}{black!20}{}{}{}{}{}{}
\end{tikzpicture}}
& \ \mapsto \
\svect{0}{1}{-2} &
%%% AC
\myvcenter{\begin{tikzpicture}[x={(-0.216506cm,-0.125000cm)}, y={(0.216506cm,-0.125000cm)}, z={(0.000000cm,0.250000cm)}]
\tuileA{A}{(0,0,0)}{white}{}{}{}{}{}{}
\tuileC{C}{(0,0,0)}{black!20}{}{}{}{}{}{}{}{}{}{}
\end{tikzpicture}}
& \ \mapsto \
\svect{-1}{0}{1} &
%%% AC
\myvcenter{\begin{tikzpicture}[x={(-0.216506cm,-0.125000cm)}, y={(0.216506cm,-0.125000cm)}, z={(0.000000cm,0.250000cm)}]
\tuileA{A}{(0,0,0)}{white}{}{}{}{}{}{}
\tuileC{C}{(2,-1,-2)}{black!20}{}{}{}{}{}{}{}{}{}{}
\end{tikzpicture}}
& \ \mapsto \
\svect{1}{-1}{-1} \\
%%%%%%%%%%%%%%%%%%
%%% BC
\myvcenter{\begin{tikzpicture}[x={(-0.216506cm,-0.125000cm)}, y={(0.216506cm,-0.125000cm)}, z={(0.000000cm,0.250000cm)}]
\tuileB{B}{(0,0,0)}{white}{}{}{}{}{}{}
\tuileC{C}{(0,0,0)}{black!20}{}{}{}{}{}{}{}{}{}{}
\end{tikzpicture}}
& \ \mapsto \
\svect{-1}{1}{0} &
%%% BC
\myvcenter{\begin{tikzpicture}[x={(-0.216506cm,-0.125000cm)}, y={(0.216506cm,-0.125000cm)}, z={(0.000000cm,0.250000cm)}]
\tuileB{B}{(0,0,0)}{white}{}{}{}{}{}{}
\tuileC{C}{(2,-3,1)}{black!20}{}{}{}{}{}{}{}{}{}{}
\end{tikzpicture}}
& \ \mapsto \
\svect{1}{-2}{1} &
%%% BB
\myvcenter{\begin{tikzpicture}[x={(-0.216506cm,-0.125000cm)}, y={(0.216506cm,-0.125000cm)}, z={(0.000000cm,0.250000cm)}]
\tuileB{B}{(0,0,0)}{white}{}{}{}{}{}{}
\tuileB{B}{(-1,0,2)}{black!20}{}{}{}{}{}{}
\end{tikzpicture}}
& \ \mapsto \
\svect{-1}{0}{2} &
%%% CC
\myvcenter{\begin{tikzpicture}[x={(-0.216506cm,-0.125000cm)}, y={(0.216506cm,-0.125000cm)}, z={(0.000000cm,0.250000cm)}]
\tuileC{C}{(0,0,0)}{white}{}{}{}{}{}{}{}{}{}{}
\tuileC{C}{(-1,2,-1)}{black!20}{}{}{}{}{}{}{}{}{}{}
\end{tikzpicture}}
& \ \mapsto \
\svect{-1}{2}{-1} \\
%%%%%%%%%%%%%%%%%%
%%% CC
\myvcenter{\begin{tikzpicture}[x={(-0.216506cm,-0.125000cm)}, y={(0.216506cm,-0.125000cm)}, z={(0.000000cm,0.250000cm)}]
\tuileC{C}{(0,0,0)}{white}{}{}{}{}{}{}{}{}{}{}
\tuileC{C}{(-1,0,2)}{black!20}{}{}{}{}{}{}{}{}{}{}
\end{tikzpicture}}
& \ \mapsto \
\svect{-1}{0}{2} &
%%% CC
\myvcenter{\begin{tikzpicture}[x={(-0.216506cm,-0.125000cm)}, y={(0.216506cm,-0.125000cm)}, z={(0.000000cm,0.250000cm)}]
\tuileC{C}{(0,0,0)}{white}{}{}{}{}{}{}{}{}{}{}
\tuileC{C}{(0,-2,3)}{black!20}{}{}{}{}{}{}{}{}{}{}
\end{tikzpicture}}
& \ \mapsto \
\svect{0}{-2}{3} &
%%% BC
\myvcenter{\begin{tikzpicture}[x={(-0.216506cm,-0.125000cm)}, y={(0.216506cm,-0.125000cm)}, z={(0.000000cm,0.250000cm)}]
\tuileB{B}{(0,0,0)}{white}{}{}{}{}{}{}
\tuileC{C}{(2,-1,-2)}{black!20}{}{}{}{}{}{}{}{}{}{}
\end{tikzpicture}}
& \ \mapsto \
\svect{1}{0}{-2} &
%%% BC
\myvcenter{\begin{tikzpicture}[x={(-0.216506cm,-0.125000cm)}, y={(0.216506cm,-0.125000cm)}, z={(0.000000cm,0.250000cm)}]
\tuileB{B}{(0,0,0)}{white}{}{}{}{}{}{}
\tuileC{C}{(0,-2,3)}{black!20}{}{}{}{}{}{}{}{}{}{}
\end{tikzpicture}}
& \ \mapsto \
\svect{-1}{-1}{3} \\
%%%%%%%%%%%%%%%%%%
%%% BC
\myvcenter{\begin{tikzpicture}[x={(-0.216506cm,-0.125000cm)}, y={(0.216506cm,-0.125000cm)}, z={(0.000000cm,0.250000cm)}]
\tuileB{B}{(0,0,0)}{white}{}{}{}{}{}{}
\tuileC{C}{(1,0,-2)}{black!20}{}{}{}{}{}{}{}{}{}{}
\end{tikzpicture}}
& \ \mapsto \
\svect{0}{1}{-2} &
%%% BC
\myvcenter{\begin{tikzpicture}[x={(-0.216506cm,-0.125000cm)}, y={(0.216506cm,-0.125000cm)}, z={(0.000000cm,0.250000cm)}]
\tuileB{B}{(0,0,0)}{white}{}{}{}{}{}{}
\tuileC{C}{(1,-3,3)}{black!20}{}{}{}{}{}{}{}{}{}{}
\end{tikzpicture}}
& \ \mapsto \
\svect{0}{-2}{3}
\end{align*}
\caption{Definition of the map $\omega_0$ over protopaths of length $2$.
The orientation of the path is indicated using colors: the first tile is white
(see Definition~\ref{def:omega0}).}
\label{fig:omega_two}
\end{figure}
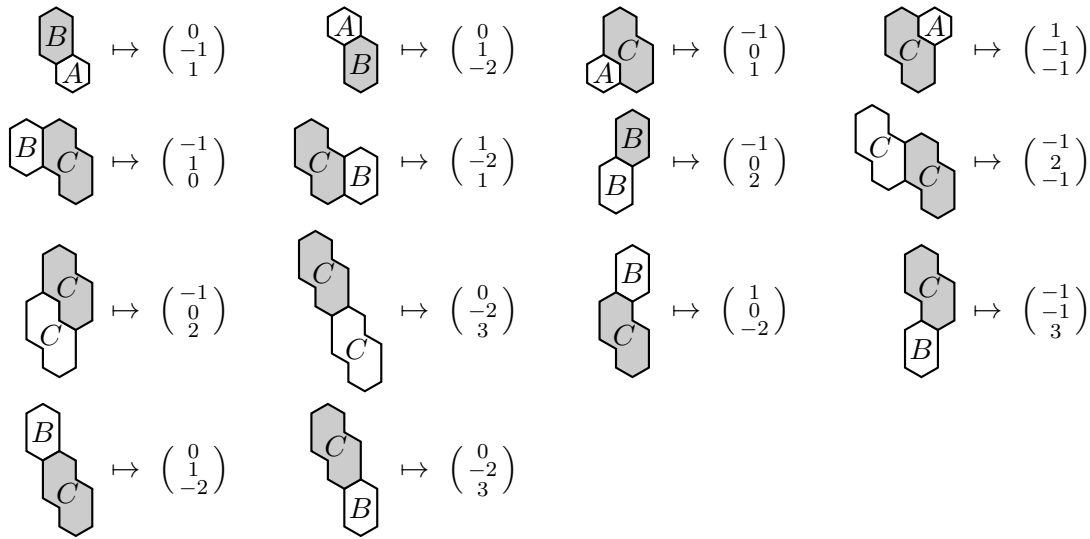

According to Lemma \ref{lem:loop:omega}, we thus obtain the following proposition.

\begin{proposition}\label{prop:omega}
The map $\omega_0:\mathfrak{P}\rightarrow \mathbb{Z}^3$ defined previously is additive,
and vanishes on each protoloop of tiles. \qed
\end{proposition}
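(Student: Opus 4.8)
The plan is to verify the two assertions separately: additivity is immediate from the way $\omega_0$ was defined, and the vanishing on protoloops is the substantive point, which reduces to a finite computation via Lemma~\ref{lem:loop:omega}.

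For additivity, recall that $\omega_0$ was defined on a protopath $\gamma = T_1, \dots, T_n$ as the telescoping sum $\sum_{i=1}^{n-1}\omega_0(T_i, T_{i+1})$ of its length-$2$ subpaths. If $\gamma = T_0, \dots, T_n$ and $\gamma' = T'_0, \dots, T'_m$ can be concatenated, so that $T'_0 = T_n$, then the consecutive pairs occurring in $\gamma\gamma' = T_0, \dots, T_n, T'_1, \dots, T'_m$ are exactly those occurring in $\gamma$ followed by those occurring in $\gamma'$ (the junction pair $(T_n, T'_1)$ being simply the first pair of $\gamma'$). Hence the defining sum splits as $\omega_0(\gamma\gamma') = \omega_0(\gamma) + \omega_0(\gamma')$, which is the claimed additivity. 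Implicitly this uses that $\omega_0$ is well defined on protopaths and not merely on paths, which holds because the values listed in Figure~\ref{fig:omega_two} depend only on the prototile types and on the type of the shared edge.

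For the vanishing on protoloops, I would invoke Lemma~\ref{lem:loop:omega}: an additive map that vanishes on every element of $\mathfrak{P}_0$ vanishes on every protoloop of $\sigma^\infty(C)$. Additivity has just been established, so the entire task is reduced to checking that $\omega_0$ vanishes on each element of $\mathfrak{P}_0$. By Lemma~\ref{lem:P0 fini} the set $\mathfrak{P}_0$ is finite, so this is a finite verification, and it is the main (indeed the only) obstacle — a matter of careful bookkeeping rather than of any conceptual difficulty.

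Concretely, for each protoloop $\gamma \in \mathfrak{P}_0$ the strategy is to split $\gamma$ into its length-$2$ subpaths, read off the value of $\omega_0$ on each from the table in Figure~\ref{fig:omega_two} (applying the antisymmetry $\omega_0(T_1, T_0) = -\omega_0(T_0, T_1)$ whenever an oriented pair appears in its reversed orientation), and sum; additivity guarantees that this sum equals $\omega_0(\gamma)$, which one checks to be $(0,0,0)$. The loops of length $2$, being of the form $T_0, T_1, T_0$, vanish automatically from the antisymmetry relation. For the remaining loops — those of length $3$ are listed explicitly just before Definition~\ref{def:omega0} — the sample computation carried out in Definition~\ref{def:omega0}, where a length-$3$ loop yields $(-1,0,2)+(0,2,-3)+(1,-2,1) = (0,0,0)$, is the prototype of every such verification. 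Running through the finitely many elements of $\mathfrak{P}_0$ in this manner completes the check, and Lemma~\ref{lem:loop:omega} then delivers the conclusion.
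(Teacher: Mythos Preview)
Your proposal is correct and follows essentially the same approach as the paper: additivity is immediate from the defining telescoping sum, and vanishing on protoloops is reduced via Lemma~\ref{lem:loop:omega} to the finite check on $\mathfrak{P}_0$, with length-$2$ loops handled by antisymmetry and the remaining ones by the kind of direct computation already exhibited in Definition~\ref{def:omega0}. The paper in fact treats the proposition as an immediate consequence of that discussion (hence the \qed\ in the statement), so there is nothing to add.
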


\begin{remark}\label{rem:gamma_0}
Given $T$, $T'$ two tiles in $\sigma^\infty(C)$, we set
$$\omega_0(T,T')= \omega_0(\gamma)$$
where $\gamma$ is any path of tiles joining $T$ to $T'$.
Indeed, if $\gamma$, $\gamma'$ are two such paths, 
Proposition~\ref{prop:omega} ensures that $\omega_0(\gamma)= \omega_0(\gamma')$
since $\omega_0$ vanishes on the loop of tiles $\gamma'\gamma^{-1}$.
\end{remark}

The following proposition will be used afterwards.

\begin{proposition}\label{prop:M-omega}
Let $T,T'$ be tiles in $\sigma^\infty(C)$. Then
\begin{equation}\label{eq:M-omega}
\omega_0\big( \bt(T),\bt(T')  \big) = \mathbf M_s^{-1} \omega_0(T,T').
\end{equation}
\end{proposition}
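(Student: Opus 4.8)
The plan is to reduce the identity~\eqref{eq:M-omega} to a finite verification, using the additivity of $\omega_0$ (Proposition~\ref{prop:omega}) as the main lever. First I would record the cocycle relation
\[
\omega_0(S,S') + \omega_0(S',S'') = \omega_0(S,S'')
\]
valid for \emph{any} three tiles $S,S',S''$ of $\sigma^\infty(C)$, not merely adjacent ones: by Remark~\ref{rem:gamma_0} the left-hand and right-hand terms are well defined, and concatenating a path from $S$ to $S'$ with a path from $S'$ to $S''$ and applying additivity yields the equality. The point of this is that both of the maps
\[
\gamma=(T_0,\dots,T_n)\ \longmapsto\ \mathbf M_s^{-1}\,\omega_0(\gamma)
\qquad\text{and}\qquad
\gamma\ \longmapsto\ \omega_0\big(\bt(T_0),\bt(T_n)\big)
\]
are additive functions on $\mathfrak P$: the first because $\omega_0$ is additive and $\mathbf M_s^{-1}$ is linear, the second by the cocycle relation applied to the tiles $\bt(T_0),\dots,\bt(T_n)$. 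Note that these images need not form a path of tiles, which is precisely why it is convenient that $\omega_0(\cdot,\cdot)$ makes sense on arbitrary pairs. Since an additive map on $\mathfrak P$ is determined by its values on protopaths of length $2$, it then suffices to establish~\eqref{eq:M-omega} in the case where $T=T_0$ and $T'=T_1$ are two adjacent tiles.

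For this length-$2$ case I would run through the finite list of protopaths $(T_0,T_1)$ displayed in Figure~\ref{fig:omega_two}. For each of them the tiles $\bt(T_0)$ and $\bt(T_1)$ are read off from Figure~\ref{tribo-noms}: when $T_j$ has type $A$ or $B$ one has $\bt(T_j)=\sigma(T_j)$, a single tile, and when $T_j$ has type $C$ the tile $\bt(T_j)$ is the distinguished $C$-face inside $\sigma(T_j)$. Because $T_0$ and $T_1$ share an edge $e$, the patches $\sigma(T_0)$ and $\sigma(T_1)$ are glued along the balanced edge-path $\sigma(e)$, so $\bt(T_0)$ and $\bt(T_1)$ both lie in the connected patch $\sigma(T_0)\cup\sigma(T_1)$; one selects an explicit path of tiles between them inside this patch and evaluates $\omega_0$ along it from the tabulated length-$2$ values. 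Comparing the outcome with $\mathbf M_s^{-1}$ applied to the vector attached to $(T_0,T_1)$ in Figure~\ref{fig:omega_two}, where $\mathbf M_s^{-1}(a,b,c)^{\mathsf{T}}=(b,\,c,\,a-b-c)^{\mathsf{T}}$, then confirms the equality case by case. (For instance the protopath $(A,B)$ with label $(0,-1,1)$ sends $\bt(A)$ to a $B$-tile and $\bt(B)$ to a $C$-tile realizing the $(B,C)$-adjacency of label $(-1,1,0)$, and indeed $\mathbf M_s^{-1}(0,-1,1)^{\mathsf{T}}=(-1,1,0)^{\mathsf{T}}$.)

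Conceptually this identity is the combinatorial shadow of formula~\eqref{equE}: the map $\bt$ mirrors the dual substitution $\EOSS$, which acts on the position of a facet by $\mathbf M_s^{-1}$, so relative positions of tiles must transform in the same way. The genuine obstacle, and the only place requiring real care, is the length-$2$ verification for the protopaths involving tiles of type $C$: there the selection $\bt(T_j)=$ ``the $C$-face of $\sigma(T_j)$'' must be tracked through the geometry of Figure~\ref{tribo-noms}, and one must ensure that the chosen path inside $\sigma(T_0)\cup\sigma(T_1)$ crosses the gluing locus $\sigma(e)$ correctly. Once this bookkeeping is fixed, each individual check is a short vector computation, and the cases of type $A$ or $B$ (where $\bt$ is simply $\sigma$) are immediate.
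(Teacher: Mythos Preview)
Your proposal is correct and follows essentially the same approach as the paper: reduce to adjacent tiles by additivity, then verify the finitely many length-$2$ protopaths from Figure~\ref{fig:omega_two} by explicit computation. You are in fact slightly more careful than the paper in spelling out why the reduction is legitimate (namely that $\omega_0(\bt(T_0),\bt(T_n))$ telescopes via the cocycle relation even though the $\bt(T_i)$ need not be pairwise adjacent), a point the paper leaves implicit.
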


\begin{proof}
Proposition~\ref{prop:omega} ensures that $\omega_0$ is additive.
It is thus sufficient to prove (\ref{eq:M-omega}) for adjacent tiles $T, T'$.
Moreover, $\omega_0(T,T')$ only depends on the protopath $(T,T')$ defined by the adjacent tiles $T, T'$.
There is only a finite number of protopaths of length 2 to consider: those which are listed on Figure~\ref{fig:omega_two}.
We detail below an instance of the kind of easy computations that have to be carried on.
Suppose that $(T,T')$ is the following protopath:
\[\myvcenter{\begin{tikzpicture}[x={(-0.216506cm,-0.125000cm)}, y={(0.216506cm,-0.125000cm)}, z={(0.000000cm,0.250000cm)}]
\tuileC{C}{(0,0,0)}{white}{}{}{}{}{}{}{}{}{}{}
\tuileC{C}{(-1,2,-1)}{black!20}{}{}{}{}{}{}{}{}{}{}
\draw[->, draw=red, thick] (0,0.5,0) -- (-1,2,-1);
\end{tikzpicture}}\;.\]
By definition of $\omega_0$ (Figure~\ref{fig:omega_two}) we have $\omega_0(T,T') = (-1,2,-1)$.
We compute $\omega_0( \bt(T),\bt(T')$ by inspecting the image of $(T,T')$ by $\sigma$:
\[\myvcenter{\begin{tikzpicture}[x={(-0.216506cm,-0.125000cm)}, y={(0.216506cm,-0.125000cm)}, z={(0.000000cm,0.250000cm)}]
\tuileC{C}{(0,0,0)}{white}{}{}{}{}{}{}{}{}{}{}
\tuileA{A}{(0,0,0)}{white}{}{}{}{}{}{}
\tuileB{B}{(0,0,0)}{white}{}{}{}{}{}{}
\tuileC{C}{(2,-1,-2)}{black!20}{}{}{}{}{}{}{}{}{}{}
\tuileA{A}{(2,-1,-2)}{black!20}{}{}{}{}{}{}
\tuileB{B}{(2,-1,-2)}{black!20}{}{}{}{}{}{}
\draw[->, draw=red, thick] (0,0,0) -- (1,-1,0);
\draw[->, draw=red, thick] (1,-1,0) -- (2,-1,-1.5);
\end{tikzpicture}}\;.\]
By choosing the path of length three above and by reading Figure~\ref{fig:omega_two},
we compute
\[
\omega_0(\bt(T),\bt(T')) = (1,-1,0) + (1,0,-2) = (2,-1,-2),
\]
so the proposition holds in this case because $\mathbf M_s^{-1} (-1,2,-1) = (2,-1,-2)$.
\end{proof}

%%%%%%%%%%%%%%%%%%%%%%%%%%%%%%%%
\subsection{From the topological patches to the stepped surface}\label{sec:stepped}
\label{sec:patchtostepped}
%%%%%%%%%%%%%%%%%%%%%%%%%%%%%%%%

Let $(P,T)$ a pointed patch formed by a patch $P$ of $\sigma^{\infty}(C)$ and a tile $T$ of $\sigma^{\infty}(C)$. We are going to associate to $(P,T)$ a multiset of facets $\varphi_0(P,T) \in\mathcal M$, see Section~\ref{sec:multiset}.
When $P$ is equal to the tile $T$, we simply denote the pointed patch $(T,T)$ by $T$.

\medskip

Let $ \mathcal{T}_\sigma$ denote the set of tiles of $\sigma^\infty(C)$, as in Section~\ref{sec:inflation}.
First, we define a map $\Phi: \mathcal{T}_\sigma \rightarrow \mathcal M$ so that two tiles of the same type have the same image. This map $\Phi$ is defined by setting:
\begin{align*}
\Phi\big(
\myvcenter{\begin{tikzpicture}
[x={(-0.216506cm,-0.125000cm)}, y={(0.216506cm,-0.125000cm)}, z={(0.000000cm,0.250000cm)}]
\tuileA{A}{(0,0,0)}{black!20}{}{}{}{}{}{}
\end{tikzpicture}}
\big)
    & = \myvcenter{\begin{tikzpicture}
[x={(-0.216506cm,-0.125000cm)}, y={(0.216506cm,-0.125000cm)}, z={(0.000000cm,0.250000cm)}]
\definecolor{facecolor}{rgb}{0.800,0.800,0.800}
\fill[thick, fill=facecolor, draw=black, shift={(0,0,0)}]
(0, 0, 0) -- (0, 0, 1) -- (1, 0, 1) -- (1, 0, 0) -- cycle;
\fill[thick, fill=facecolor, draw=black, shift={(0,0,0)}]
(0, 0, 0) -- (0, 1, 0) -- (0, 1, 1) -- (0, 0, 1) -- cycle;
\fill[thick, fill=facecolor, draw=black, shift={(0,0,0)}]
(0, 0, 0) -- (1, 0, 0) -- (1, 1, 0) -- (0, 1, 0) -- cycle;
%\draw[thick, densely dotted] (0,0,0) -- (1,0,0) -- (1,0,-1);
\fill (0,0,0) circle (2.5pt);
\end{tikzpicture}}
    = \mathbf E([\mathbf 0,1]^*),
&
\Phi\Big(
\myvcenter{\begin{tikzpicture}
[x={(-0.216506cm,-0.125000cm)}, y={(0.216506cm,-0.125000cm)}, z={(0.000000cm,0.250000cm)}]
\tuileB{B}{(0,0,0)}{black!20}{}{}{}{}{}{}
\end{tikzpicture}}
\Big)
    & = \myvcenter{\begin{tikzpicture}
[x={(-0.216506cm,-0.125000cm)}, y={(0.216506cm,-0.125000cm)}, z={(0.000000cm,0.250000cm)}]
\definecolor{facecolor}{rgb}{0.800,0.800,0.800}
\fill[thick, fill=facecolor, draw=black, shift={(0,0,1)}]
(0, 0, 0) -- (0, 0, 1) -- (1, 0, 1) -- (1, 0, 0) -- cycle;
\fill[thick, fill=facecolor, draw=black, shift={(0,0,0)}]
(0, 0, 0) -- (0, 0, 1) -- (1, 0, 1) -- (1, 0, 0) -- cycle;
\fill[thick, fill=facecolor, draw=black, shift={(0,0,0)}]
(0, 0, 0) -- (0, 1, 0) -- (0, 1, 1) -- (0, 0, 1) -- cycle;
\fill[thick, fill=facecolor, draw=black, shift={(0,0,1)}]
(0, 0, 0) -- (0, 1, 0) -- (0, 1, 1) -- (0, 0, 1) -- cycle;
\fill[thick, fill=facecolor, draw=black, shift={(0,0,0)}]
(0, 0, 0) -- (1, 0, 0) -- (1, 1, 0) -- (0, 1, 0) -- cycle;
%\draw[thick, densely dotted] (0,0,0) -- (1,0,0) -- (1,-1,0);
\fill (0,0,0) circle (2.5pt);
\end{tikzpicture}}
    = \mathbf E^2([\mathbf 0,1]^*),
&
\Phi\Big(\,
\myvcenter{\begin{tikzpicture}
[x={(-0.216506cm,-0.125000cm)}, y={(0.216506cm,-0.125000cm)}, z={(0.000000cm,0.250000cm)}]
\tuileC{C}{(0,0,0)}{black!20}{}{}{}{}{}{}{}{}{}{}
\end{tikzpicture}}\,
\Big)
    & = \myvcenter{\begin{tikzpicture}
[x={(-0.216506cm,-0.125000cm)}, y={(0.216506cm,-0.125000cm)}, z={(0.000000cm,0.250000cm)}]
\definecolor{facecolor}{rgb}{0.800,0.800,0.800}
\fill[thick, fill=facecolor, draw=black, shift={(0,0,1)}]
(0, 0, 0) -- (0, 0, 1) -- (1, 0, 1) -- (1, 0, 0) -- cycle;
\fill[thick, fill=facecolor, draw=black, shift={(0,0,1)}]
(0, 0, 0) -- (0, 1, 0) -- (0, 1, 1) -- (0, 0, 1) -- cycle;
\fill[thick, fill=facecolor, draw=black, shift={(0,1,0)}]
(0, 0, 0) -- (0, 1, 0) -- (0, 1, 1) -- (0, 0, 1) -- cycle;
\fill[thick, fill=facecolor, draw=black, shift={(0,1,-1)}]
(0, 0, 0) -- (1, 0, 0) -- (1, 1, 0) -- (0, 1, 0) -- cycle;
\fill[thick, fill=facecolor, draw=black, shift={(0,1,-1)}]
(0, 0, 0) -- (0, 0, 1) -- (1, 0, 1) -- (1, 0, 0) -- cycle;
\fill[thick, fill=facecolor, draw=black, shift={(0,0,0)}]
(0, 0, 0) -- (0, 0, 1) -- (1, 0, 1) -- (1, 0, 0) -- cycle;
\fill[thick, fill=facecolor, draw=black, shift={(0,1,-1)}]
(0, 0, 0) -- (0, 1, 0) -- (0, 1, 1) -- (0, 0, 1) -- cycle;
\fill[thick, fill=facecolor, draw=black, shift={(0,0,0)}]
(0, 0, 0) -- (1, 0, 0) -- (1, 1, 0) -- (0, 1, 0) -- cycle;
\fill[thick, fill=facecolor, draw=black, shift={(0,0,0)}]
(0, 0, 0) -- (0, 1, 0) -- (0, 1, 1) -- (0, 0, 1) -- cycle;
\fill (0,0,0) circle (2.5pt);
\end{tikzpicture}}
    = \mathbf E^3([\mathbf 0,1]^*).
\end{align*}
Alternatively: 
$$\Phi(A)=\mathbf E ^3([\mathbf 0,3]^*)+\mathbf e_3-\mathbf e_1,\quad \Phi(B)=\mathbf E^3([\mathbf 0,2]^*)+\mathbf e_2-\mathbf e_1,\quad \Phi(C)=\mathbf E^3([\mathbf 0,1]^*).$$

In the pictures representing multisets, the symbol $\bullet$ indicates the origin of $\mathbb R^3$. 
For instance the image of $A$ 
is the multiset $m:\mathbb Z^3\rightarrow\mathbb Z_{\geq 0}^3$ defined by
$$m(\mathbf y) = \begin{cases}
\mathbf 0 & \;\;\text{ if }\;\; \mathbf y\neq (1,0,-1) \\
(1,1,1) & \;\;\text{ if }\;\; \mathbf y= (1,0,-1).
\end{cases}$$

For a patch $P$ and a tile $T$, we consider
$T'$ another tile  and $\gamma$ a path of tiles from $T$ to $T'$. 
By Proposition \ref{prop:omega}, the vector $\omega_0(\gamma)$ only depends on $T$ and $T'$, and not on the choice of the path $\gamma$. Thus we denote it by $\omega_0(T,T')$.

\begin{definition}\label{def:phi_0}
Let $P$ be a patch of $\sigma^\infty(C)$, and let $T$ be a tile of $\sigma^\infty(C)$.
The multiset of facets $\varphi_0(P,T)\in\mathcal{M}$ is defined by:
$$\varphi_0(P,T)=\displaystyle\sum_{T'\in P} \Big(\Phi(T')+\omega_0(T,T')\Big).$$
\end{definition}

\begin{remark}\label{rem:Phi varphi_0}
By definition, we notice that $\varphi_0(T,T)=\Phi(T)$ for every tile $T$ in $\sigma^\infty(C)$.
\end{remark}

The next two lemmas state useful properties of the map $\varphi_0$.
By definition of $\varphi_0$ and by additivity of $\omega_0$, we derive immediately the following lemma.

\begin{lemma}\label{lem:changement tuile de base}
Let $P$ be a patch of $\sigma^\infty(C)$, and let $T$, $T'$ be tiles of $\sigma^\infty(C)$. 
Then we have $\varphi_0(P,T)=\varphi_0(P,T')+\omega_0(T,T')$.
\end{lemma}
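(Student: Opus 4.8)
The plan is to unfold Definition~\ref{def:phi_0} and reduce the identity to the additivity of $\omega_0$ recorded in Proposition~\ref{prop:omega}. First I would establish the cocycle relation
$$\omega_0(T,T'')=\omega_0(T,T')+\omega_0(T',T'')$$
valid for any three tiles $T,T',T''$ of $\sigma^\infty(C)$. This follows by concatenating a path of tiles from $T$ to $T'$ with one from $T'$ to $T''$ (such paths exist since $\sigma^\infty(C)$ is connected), applying the additivity of $\omega_0$ on the concatenation, and invoking the path-independence of $\omega_0(\,\cdot\,,\cdot\,)$ from Remark~\ref{rem:gamma_0}. Note that $T$ and $T'$ need not lie in $P$, but this causes no trouble because $\omega_0$ is defined on all pairs of tiles.

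Next I would rewrite each summand of $\varphi_0(P,T)$. Recalling that in Definition~\ref{def:phi_0} the symbol $+$ inside the sum denotes the translation action of $\mathbb{Z}^3$ on $\mathcal{M}$, the cocycle relation gives, for every tile $T''\in P$,
$$\Phi(T'')+\omega_0(T,T'')=\big(\Phi(T'')+\omega_0(T',T'')\big)+\omega_0(T,T').$$
Thus each summand of $\varphi_0(P,T)$ is obtained from the corresponding summand of $\varphi_0(P,T')$ by the \emph{same} translation by the vector $\omega_0(T,T')$, independently of $T''$.

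The key step is then to pull this common translation out of the monoid sum. This distributivity holds because both the translation action, $m+\mathbf v:\mathbf x\mapsto m(\mathbf x-\mathbf v)$, and the addition of multisets are defined pointwise on $\mathbb{Z}^3$; evaluating at an arbitrary $\mathbf x$ shows that $\sum_i(m_i+\mathbf v)=\big(\sum_i m_i\big)+\mathbf v$. Applying this with $\mathbf v=\omega_0(T,T')$ yields
$$\varphi_0(P,T)=\Big(\sum_{T''\in P}\big(\Phi(T'')+\omega_0(T',T'')\big)\Big)+\omega_0(T,T')=\varphi_0(P,T')+\omega_0(T,T'),$$
which is the claim. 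There is no genuine obstacle here; the only point requiring a moment of care is this distributivity of the $\mathbb{Z}^3$-action over the monoid addition, which is precisely what allows the translation by $\omega_0(T,T')$ to be applied once to the whole sum rather than $|P|$ times, so that a single copy of $\omega_0(T,T')$ survives in the final formula.
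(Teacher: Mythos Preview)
Your proof is correct and follows exactly the approach the paper indicates: the paper does not give a detailed argument but simply states that the lemma is derived immediately ``by definition of $\varphi_0$ and by additivity of $\omega_0$'', which is precisely the cocycle relation plus distributivity of the translation action that you spell out. Your write-up is a faithful unpacking of that one-line justification.
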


Let $P_1$, $P_2$ be patches in $\sigma^\infty(C)$.
We denote by $P_1\cap P_2$ the (possibly empty) patch in $\sigma^\infty(C)$ made of tiles belonging to both $P_1$ and $P_2$: this is the standard definition of ``intersection of patches''.

\begin{lemma}
\label{lem:partition}
Let $P_1$, $P_2$ be patches of $\sigma^\infty(C)$, 

and let $T$ be a tile of $\sigma^\infty(C)$. 
\begin{itemize}
\item If $P_1$ and $P_2$ have no tile in common, then $\varphi_0(P_1\cup P_2,T)=\varphi_0(P_1,T)+\varphi_0(P_2,T)$.
\item In the general situation we have
$\varphi_0(P_1\cup P_2,T)=\varphi_0(P_1,T)+\varphi_0(P_2,T)-\varphi_0(P_1\cap P_2,T)$.
\end{itemize}
\end{lemma}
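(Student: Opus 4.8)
The plan is to notice that, by Definition~\ref{def:phi_0}, the multiset $\varphi_0(P,T)$ depends on the patch $P$ only through the \emph{set} of tiles occurring in $P$: it is literally the sum, indexed by the tiles $T'\in P$, of the multisets $\Phi(T')+\omega_0(T,T')$. Consequently the whole statement reduces to the inclusion--exclusion principle for a function summed over finite sets, with the base tile $T$ kept fixed throughout. No topological or combinatorial structure of the patches is needed; only their underlying tile-sets matter.

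First I would fix $T$ and abbreviate $g(T')=\Phi(T')+\omega_0(T,T')\in\mathcal M$, so that $\varphi_0(P,T)=\sum_{T'\in P}g(T')$ for every patch $P$, where the index set is the finite set of tiles of $P$. For the first item, the hypothesis that $P_1$ and $P_2$ have no common tile means that the tiles of $P_1\cup P_2$ are the disjoint union of the tiles of $P_1$ and those of $P_2$; the sum splits accordingly and gives $\varphi_0(P_1\cup P_2,T)=\varphi_0(P_1,T)+\varphi_0(P_2,T)$ at once.

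For the general item, the tiles of $P_1\cup P_2$ form the union of the two tile-sets, and the tiles of $P_1\cap P_2$ their intersection. The elementary inclusion--exclusion identity for finite sums then yields
\[
\varphi_0(P_1\cup P_2,T)+\varphi_0(P_1\cap P_2,T)=\varphi_0(P_1,T)+\varphi_0(P_2,T),
\]
which is the desired formula once rearranged.

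The only point requiring a little care --- and the only thing I would flag as an obstacle, a mild one --- is the minus sign in the stated identity: $\mathcal M$ is a priori only a monoid, so $\varphi_0(P_1,T)+\varphi_0(P_2,T)-\varphi_0(P_1\cap P_2,T)$ is not defined within $\mathcal M$ itself. To justify it I would embed $\mathcal M$ into the group of $\mathbb Z$-valued weight functions (the signed multisets already alluded to in Section~\ref{sec:multiset}), where subtraction is legitimate; the displayed equality can then be transposed, and its right-hand side automatically lands back in $\mathcal M$ precisely because it equals $\varphi_0(P_1\cup P_2,T)$. I would also remark that, although $P_1\cap P_2$ need not be connected and so need not be a genuine patch, the expression $\varphi_0(P_1\cap P_2,T)$ still makes sense as the sum of $g$ over the set of common tiles, so the formula is meaningful as stated.
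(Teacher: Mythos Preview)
Your argument is correct and matches the paper's proof, which likewise just splits the defining sum over the disjoint union of tile-sets and declares the second item a direct consequence of the first. Your additional remarks on the subtraction in the monoid $\mathcal M$ and on $P_1\cap P_2$ possibly failing to be a patch are legitimate points that the paper passes over in silence.
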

\begin{proof}
The second point is a direct consequence of the first one. By definition,
and because $P_1,P_2$ have no tile in common, we have
\begin{align*}
\varphi_0(P_1\cup P_2,T)
    &= \sum_{T'\in P_1\cup P_2} \Big(\Phi(T')+\omega_0(T,T')\Big) \\
    &= \sum_{T'\in P_1} \Big(\Phi(T')+\omega_0(T,T')\Big)+\displaystyle\sum_{T'\in P_2} \Big(\Phi(T')+\omega_0(T,T')\Big) \\
    &= \varphi_0(P_1\cup P_2,T)=\varphi_0(P_1,T)+\varphi_0( P_2,T).
\end{align*}
\end{proof}

\subsection{Commutation between $\sigma, \mathbf E, \varphi_0$ }

\begin{proposition}\label{propcle}
\label{prop:phi_commute}
Let $P$ be a simply connected patch of $\sigma^\infty(C)$ and $T$ a tile.
We have
\begin{equation}\label{eq:commutation}
\varphi_0\circ\hat\sigma(P,T) = \mathbf E\circ\varphi_0(P,T).
\end{equation}
\end{proposition}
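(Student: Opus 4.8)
The plan is to reduce the identity \eqref{eq:commutation} to a finite check on the three prototiles, exploiting the additivity of every map involved. First I would expand the right-hand side. Since $\mathbf E$ is additive on $\mathcal M$ and since \eqref{equE} extends by additivity to $\mathbf E(m+\mathbf u)=\mathbf E(m)+\mathbf M_s^{-1}\mathbf u$ for every multiset $m\in\mathcal M$ and every $\mathbf u\in\mathbb Z^3$, the definition of $\varphi_0$ (Definition~\ref{def:phi_0}) gives
\[
\mathbf E\circ\varphi_0(P,T)=\sum_{T'\in P}\Big(\mathbf E(\Phi(T'))+\mathbf M_s^{-1}\omega_0(T,T')\Big).
\]

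For the left-hand side I would use that the tiles of $\sigma(P)$ form the disjoint union, over the tiles $T'$ of $P$, of the tiles of $\sigma(T')$ (the images of distinct tiles of $P$ occupy regions with disjoint interiors, sharing only boundary edges). Directly from the definition of $\varphi_0$, or by iterating Lemma~\ref{lem:partition}, this yields $\varphi_0(\sigma(P),\bt(T))=\sum_{T'\in P}\varphi_0(\sigma(T'),\bt(T))$. I would then rebase each summand at $\bt(T')$ via Lemma~\ref{lem:changement tuile de base} and convert the base-change vector with Proposition~\ref{prop:M-omega}, obtaining
\[
\varphi_0(\sigma(T'),\bt(T))=\varphi_0(\sigma(T'),\bt(T'))+\omega_0(\bt(T),\bt(T'))=\varphi_0(\sigma(T'),\bt(T'))+\mathbf M_s^{-1}\omega_0(T,T').
\]
Summing over $T'\in P$ and comparing the two expansions term by term, the proposition becomes equivalent to the single-tile identity $\varphi_0(\sigma(T'),\bt(T'))=\mathbf E(\Phi(T'))$, which it suffices to check for $T'\in\{A,B,C\}$.

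The cases $A$ and $B$ are immediate: $\sigma(A)$ and $\sigma(B)$ are single tiles, of type $B$ and $C$ respectively, so by Remark~\ref{rem:Phi varphi_0} the left-hand sides equal $\Phi(B)$ and $\Phi(C)$, matching $\mathbf E(\Phi(A))=\mathbf E^2([\mathbf 0,1]^*)=\Phi(B)$ and $\mathbf E(\Phi(B))=\mathbf E^3([\mathbf 0,1]^*)=\Phi(C)$. The case $C$ is the crux. Here $\sigma(C)$ consists of three tiles of types $A$, $B$, $C$, with $\bt(C)$ the type-$C$ tile, so the left-hand side is $\Phi(C)+\big(\Phi(A)+\omega_0(\bt(C),A)\big)+\big(\Phi(B)+\omega_0(\bt(C),B)\big)$. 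On the other side I would write $\mathbf E(\Phi(C))=\mathbf E^4([\mathbf 0,1]^*)=\mathbf E^3\big([\mathbf 0,1]^*+[\mathbf 0,2]^*+[\mathbf 0,3]^*\big)$ and use the alternative expressions $\Phi(A)=\mathbf E^3([\mathbf 0,3]^*)+\mathbf e_3-\mathbf e_1$ and $\Phi(B)=\mathbf E^3([\mathbf 0,2]^*)+\mathbf e_2-\mathbf e_1$ to get $\mathbf E(\Phi(C))=\Phi(C)+\big(\Phi(A)+\mathbf e_1-\mathbf e_3\big)+\big(\Phi(B)+\mathbf e_1-\mathbf e_2\big)$. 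Thus the case $C$ reduces to the two position identities $\omega_0(\bt(C),A)=\mathbf e_1-\mathbf e_3$ and $\omega_0(\bt(C),B)=\mathbf e_1-\mathbf e_2$, read off Figure~\ref{fig:omega_two} from the adjacencies inside $\sigma(C)$ (the entries $\omega_0(A,C)=(-1,0,1)$ and $\omega_0(B,C)=(-1,1,0)$ give exactly the required negatives).

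I expect the main obstacle to be precisely this last compatibility in the case $C$: the formal part (additivity of $\mathbf E$, $\omega_0$ and $\varphi_0$ together with Proposition~\ref{prop:M-omega}) is routine bookkeeping, but the two position identities are the one place where the concrete numerical data of $\Phi$, of $\omega_0$, and of the combinatorics of $\sigma(C)$ must all fit together, and they are what make the commutation \eqref{eq:commutation} true rather than a purely formal consequence of additivity.
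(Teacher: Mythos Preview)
Your proof is correct and follows essentially the same route as the paper: reduce \eqref{eq:commutation} to the single-tile identity $\varphi_0(\sigma(T'),\bt(T'))=\mathbf E(\Phi(T'))$ via Lemma~\ref{lem:partition}, Lemma~\ref{lem:changement tuile de base} and Proposition~\ref{prop:M-omega}, then verify this on $A,B,C$. The only cosmetic difference is that the paper presents the single-tile verification first (and leaves the case $C$ as a diagrammatic check), whereas you do the reduction first and spell out the case $C$ algebraically via the two position identities.
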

In the previous formula, we formally consider that the map $\EOSS$ acts on multisets.
\begin{proof}
A direct verification shows that for every tile $T$ in $\sigma^\infty(C)$, we have
\begin{equation}\label{eq:Phi}
\varphi_0\circ\hat\sigma(T,T) = \mathbf E\circ\varphi_0(T,T),
\end{equation}
as detailed on the the following diagrams (where the base tile of a patch is the white tile).
\begin{center}
\begin{tikzpicture}
%%% image of 2
%
\draw (0,0) node (a) {%
\myvcenter{\begin{tikzpicture}
[x={(-0.216506cm,-0.125000cm)}, y={(0.216506cm,-0.125000cm)}, z={(0.000000cm,0.250000cm)}]
\tuileA{A}{(0,0,0)}{white%black!20
}{}{}{}{}{}{}
\end{tikzpicture}}%
};
\draw (2,0) node (b) {%
\myvcenter{\begin{tikzpicture}
[x={(-0.216506cm,-0.125000cm)}, y={(0.216506cm,-0.125000cm)}, z={(0.000000cm,0.250000cm)}]
\tuileB{B}{(0,0,0)}{white%black!20
}{}{}{}{}{}{}
\end{tikzpicture}}%
};
\draw (0,-2) node (c) {%
\myvcenter{\begin{tikzpicture}
[x={(-0.216506cm,-0.125000cm)}, y={(0.216506cm,-0.125000cm)}, z={(0.000000cm,0.250000cm)}]
\definecolor{facecolor}{rgb}{0.800,0.800,0.800}
\fill[thick, fill=facecolor, draw=black, shift={(0,0,0)}]
(0, 0, 0) -- (0, 0, 1) -- (1, 0, 1) -- (1, 0, 0) -- cycle;
\fill[thick, fill=facecolor, draw=black, shift={(0,0,0)}]
(0, 0, 0) -- (0, 1, 0) -- (0, 1, 1) -- (0, 0, 1) -- cycle;
\fill[thick, fill=facecolor, draw=black, shift={(0,0,0)}]
(0, 0, 0) -- (1, 0, 0) -- (1, 1, 0) -- (0, 1, 0) -- cycle;
%\draw[thick, densely dotted] (0,0,0) -- (1,0,0) -- (1,0,-1);
\fill (0,0,0) circle (2.5pt);
\end{tikzpicture}}%
};
\draw (2,-2) node (d) {%
\myvcenter{\begin{tikzpicture}
[x={(-0.216506cm,-0.125000cm)}, y={(0.216506cm,-0.125000cm)}, z={(0.000000cm,0.250000cm)}]
\definecolor{facecolor}{rgb}{0.800,0.800,0.800}
\fill[thick, fill=facecolor, draw=black, shift={(0,0,1)}]
(0, 0, 0) -- (0, 0, 1) -- (1, 0, 1) -- (1, 0, 0) -- cycle;
\fill[thick, fill=facecolor, draw=black, shift={(0,0,0)}]
(0, 0, 0) -- (0, 0, 1) -- (1, 0, 1) -- (1, 0, 0) -- cycle;
\fill[thick, fill=facecolor, draw=black, shift={(0,0,0)}]
(0, 0, 0) -- (0, 1, 0) -- (0, 1, 1) -- (0, 0, 1) -- cycle;
\fill[thick, fill=facecolor, draw=black, shift={(0,0,1)}]
(0, 0, 0) -- (0, 1, 0) -- (0, 1, 1) -- (0, 0, 1) -- cycle;
\fill[thick, fill=facecolor, draw=black, shift={(0,0,0)}]
(0, 0, 0) -- (1, 0, 0) -- (1, 1, 0) -- (0, 1, 0) -- cycle;
%\draw[thick, densely dotted] (0,0,0) -- (1,0,0) -- (1,-1,0);
\fill (0,0,0) circle (2.5pt);
\end{tikzpicture}}%
};
\draw[|->, decoration={markings,mark=at position 1 with {\arrow[scale=1.7]{>}}}, postaction={decorate}, shorten >=0.4pt ]
(a) -- node [above] {$\hat\sigma$} (b);
\draw[|->, decoration={markings,mark=at position 1 with {\arrow[scale=1.7]{>}}}, postaction={decorate}, shorten >=0.4pt ]
(c) -- node [above] {$\mathbf E$} (d);
\draw[|->, decoration={markings,mark=at position 1 with {\arrow[scale=1.7]{>}}}, postaction={decorate}, shorten >=0.4pt ]
(a) -- node [left] {$\varphi_0$
} (c);
\draw[|->, decoration={markings,mark=at position 1 with {\arrow[scale=1.7]{>}}}, postaction={decorate}, shorten >=0.4pt ]
(b) -- node [right] {$\varphi_0$
} (d);
\end{tikzpicture}
\hfil
\begin{tikzpicture}
\draw (0,0) node (a) {%
\myvcenter{\begin{tikzpicture}
[x={(-0.216506cm,-0.125000cm)}, y={(0.216506cm,-0.125000cm)}, z={(0.000000cm,0.250000cm)}]
\tuileB{B}{(0,0,0)}{white
}{}{}{}{}{}{}
\end{tikzpicture}}%
};
\draw (2,0) node (b) {%
\myvcenter{\begin{tikzpicture}
[x={(-0.216506cm,-0.125000cm)}, y={(0.216506cm,-0.125000cm)}, z={(0.000000cm,0.250000cm)}]
\tuileC{C}{(0,0,0)}{white%black!20
}{}{}{}{}{}{}{}{}{}{}
\end{tikzpicture}}%
};
\draw (0,-2) node (c) {%
\myvcenter{\begin{tikzpicture}
[x={(-0.216506cm,-0.125000cm)}, y={(0.216506cm,-0.125000cm)}, z={(0.000000cm,0.250000cm)}]
\definecolor{facecolor}{rgb}{0.800,0.800,0.800}
\fill[thick, fill=facecolor, draw=black, shift={(0,0,1)}]
(0, 0, 0) -- (0, 0, 1) -- (1, 0, 1) -- (1, 0, 0) -- cycle;
\fill[thick, fill=facecolor, draw=black, shift={(0,0,0)}]
(0, 0, 0) -- (0, 0, 1) -- (1, 0, 1) -- (1, 0, 0) -- cycle;
\fill[thick, fill=facecolor, draw=black, shift={(0,0,0)}]
(0, 0, 0) -- (0, 1, 0) -- (0, 1, 1) -- (0, 0, 1) -- cycle;
\fill[thick, fill=facecolor, draw=black, shift={(0,0,1)}]
(0, 0, 0) -- (0, 1, 0) -- (0, 1, 1) -- (0, 0, 1) -- cycle;
\fill[thick, fill=facecolor, draw=black, shift={(0,0,0)}]
(0, 0, 0) -- (1, 0, 0) -- (1, 1, 0) -- (0, 1, 0) -- cycle;
%\draw[thick, densely dotted] (0,0,0) -- (1,0,0) -- (1,-1,0);
\fill (0,0,0) circle (2.5pt);
\end{tikzpicture}}%
};
\draw (2,-2) node (d) {%
\myvcenter{\begin{tikzpicture}
[x={(-0.216506cm,-0.125000cm)}, y={(0.216506cm,-0.125000cm)}, z={(0.000000cm,0.250000cm)}]
\definecolor{facecolor}{rgb}{0.800,0.800,0.800}
\fill[thick, fill=facecolor, draw=black, shift={(0,0,1)}]
(0, 0, 0) -- (0, 0, 1) -- (1, 0, 1) -- (1, 0, 0) -- cycle;
\fill[thick, fill=facecolor, draw=black, shift={(0,0,1)}]
(0, 0, 0) -- (0, 1, 0) -- (0, 1, 1) -- (0, 0, 1) -- cycle;
\fill[thick, fill=facecolor, draw=black, shift={(0,1,0)}]
(0, 0, 0) -- (0, 1, 0) -- (0, 1, 1) -- (0, 0, 1) -- cycle;
\fill[thick, fill=facecolor, draw=black, shift={(0,1,-1)}]
(0, 0, 0) -- (1, 0, 0) -- (1, 1, 0) -- (0, 1, 0) -- cycle;
\fill[thick, fill=facecolor, draw=black, shift={(0,1,-1)}]
(0, 0, 0) -- (0, 0, 1) -- (1, 0, 1) -- (1, 0, 0) -- cycle;
\fill[thick, fill=facecolor, draw=black, shift={(0,0,0)}]
(0, 0, 0) -- (0, 0, 1) -- (1, 0, 1) -- (1, 0, 0) -- cycle;
\fill[thick, fill=facecolor, draw=black, shift={(0,1,-1)}]
(0, 0, 0) -- (0, 1, 0) -- (0, 1, 1) -- (0, 0, 1) -- cycle;
\fill[thick, fill=facecolor, draw=black, shift={(0,0,0)}]
(0, 0, 0) -- (1, 0, 0) -- (1, 1, 0) -- (0, 1, 0) -- cycle;
\fill[thick, fill=facecolor, draw=black, shift={(0,0,0)}]
(0, 0, 0) -- (0, 1, 0) -- (0, 1, 1) -- (0, 0, 1) -- cycle;
\fill (0,0,0) circle (2.5pt);
\end{tikzpicture}}%
};
%%% arrows
\draw[|->, decoration={markings,mark=at position 1 with {\arrow[scale=1.7]{>}}}, postaction={decorate}, shorten >=0.4pt ]
(a) -- node [above] {$\hat\sigma$} (b);
\draw[|->, decoration={markings,mark=at position 1 with {\arrow[scale=1.7]{>}}}, postaction={decorate}, shorten >=0.4pt ]
(c) -- node [above] {$\mathbf E$} (d);
\draw[|->, decoration={markings,mark=at position 1 with {\arrow[scale=1.7]{>}}}, postaction={decorate}, shorten >=0.4pt ]
(a) -- node [left] {$\varphi_0$
} (c);
\draw[|->, decoration={markings,mark=at position 1 with {\arrow[scale=1.7]{>}}}, postaction={decorate}, shorten >=0.4pt ]
(b) -- node [right] {$\varphi_0$
} (d);
\end{tikzpicture}
\hfil
\begin{tikzpicture}
\draw (0,0) node (a) {%
\myvcenter{\begin{tikzpicture}
[x={(-0.216506cm,-0.125000cm)}, y={(0.216506cm,-0.125000cm)}, z={(0.000000cm,0.250000cm)}]
\tuileC{C}{(0,0,0)}{white%black!20
}{}{}{}{}{}{}{}{}{}{}
\end{tikzpicture}}%
};
\draw (2,0) node (b) {%
\myvcenter{\begin{tikzpicture}
[x={(-0.216506cm,-0.125000cm)}, y={(0.216506cm,-0.125000cm)}, z={(0.000000cm,0.250000cm)}]
\tuileA{A}{(0,0,0)}{white%black!20
}{}{}{}{}{}{}
\tuileB{B}{(0,0,0)}{black!20}{}{}{}{}{}{}
\tuileC{C}{(0,0,0)}{black!20}{}{}{}{}{}{}{}{}{}{}
\end{tikzpicture}}%
};
\draw (0,-2) node (c) {%
\myvcenter{\begin{tikzpicture}
[x={(-0.216506cm,-0.125000cm)}, y={(0.216506cm,-0.125000cm)}, z={(0.000000cm,0.250000cm)}]
\definecolor{facecolor}{rgb}{0.800,0.800,0.800}
\fill[thick, fill=facecolor, draw=black, shift={(0,0,1)}]
(0, 0, 0) -- (0, 0, 1) -- (1, 0, 1) -- (1, 0, 0) -- cycle;
\fill[thick, fill=facecolor, draw=black, shift={(0,0,1)}]
(0, 0, 0) -- (0, 1, 0) -- (0, 1, 1) -- (0, 0, 1) -- cycle;
\fill[thick, fill=facecolor, draw=black, shift={(0,1,0)}]
(0, 0, 0) -- (0, 1, 0) -- (0, 1, 1) -- (0, 0, 1) -- cycle;
\fill[thick, fill=facecolor, draw=black, shift={(0,1,-1)}]
(0, 0, 0) -- (1, 0, 0) -- (1, 1, 0) -- (0, 1, 0) -- cycle;
\fill[thick, fill=facecolor, draw=black, shift={(0,1,-1)}]
(0, 0, 0) -- (0, 0, 1) -- (1, 0, 1) -- (1, 0, 0) -- cycle;
\fill[thick, fill=facecolor, draw=black, shift={(0,0,0)}]
(0, 0, 0) -- (0, 0, 1) -- (1, 0, 1) -- (1, 0, 0) -- cycle;
\fill[thick, fill=facecolor, draw=black, shift={(0,1,-1)}]
(0, 0, 0) -- (0, 1, 0) -- (0, 1, 1) -- (0, 0, 1) -- cycle;
\fill[thick, fill=facecolor, draw=black, shift={(0,0,0)}]
(0, 0, 0) -- (1, 0, 0) -- (1, 1, 0) -- (0, 1, 0) -- cycle;
\fill[thick, fill=facecolor, draw=black, shift={(0,0,0)}]
(0, 0, 0) -- (0, 1, 0) -- (0, 1, 1) -- (0, 0, 1) -- cycle;
\fill (0,0,0) circle (2.5pt);
\end{tikzpicture}}%
};
\draw (2,-2) node (d) {%
\myvcenter{\begin{tikzpicture}
[x={(-0.216506cm,-0.125000cm)}, y={(0.216506cm,-0.125000cm)}, z={(0.000000cm,0.250000cm)}]
\definecolor{facecolor}{rgb}{0.800,0.800,0.800}
\fill[thick, fill=facecolor, draw=black, shift={(1,0,-1)}]
(0, 0, 0) -- (0, 0, 1) -- (1, 0, 1) -- (1, 0, 0) -- cycle;
\fill[thick, fill=facecolor, draw=black, shift={(1,0,-1)}]
(0, 0, 0) -- (0, 1, 0) -- (0, 1, 1) -- (0, 0, 1) -- cycle;
\fill[thick, fill=facecolor, draw=black, shift={(1,0,-1)}]
(0, 0, 0) -- (1, 0, 0) -- (1, 1, 0) -- (0, 1, 0) -- cycle;
\fill[thick, fill=facecolor, draw=black, shift={(1,-1,0)}]
(0, 0, 0) -- (0, 1, 0) -- (0, 1, 1) -- (0, 0, 1) -- cycle;
\fill[thick, fill=facecolor, draw=black, shift={(1,-1,1)}]
(0, 0, 0) -- (0, 1, 0) -- (0, 1, 1) -- (0, 0, 1) -- cycle;
\fill[thick, fill=facecolor, draw=black, shift={(1,-1,0)}]
(0, 0, 0) -- (0, 0, 1) -- (1, 0, 1) -- (1, 0, 0) -- cycle;
\fill[thick, fill=facecolor, draw=black, shift={(1,-1,1)}]
(0, 0, 0) -- (0, 0, 1) -- (1, 0, 1) -- (1, 0, 0) -- cycle;
\fill[thick, fill=facecolor, draw=black, shift={(1,-1,0)}]
(0, 0, 0) -- (1, 0, 0) -- (1, 1, 0) -- (0, 1, 0) -- cycle;
\fill[thick, fill=facecolor, draw=black, shift={(0,0,1)}]
(0, 0, 0) -- (0, 0, 1) -- (1, 0, 1) -- (1, 0, 0) -- cycle;
\fill[thick, fill=facecolor, draw=black, shift={(0,0,1)}]
(0, 0, 0) -- (0, 1, 0) -- (0, 1, 1) -- (0, 0, 1) -- cycle;
\fill[thick, fill=facecolor, draw=black, shift={(0,1,0)}]
(0, 0, 0) -- (0, 1, 0) -- (0, 1, 1) -- (0, 0, 1) -- cycle;
\fill[thick, fill=facecolor, draw=black, shift={(0,1,-1)}]
(0, 0, 0) -- (1, 0, 0) -- (1, 1, 0) -- (0, 1, 0) -- cycle;
\fill[thick, fill=facecolor, draw=black, shift={(0,1,-1)}]
(0, 0, 0) -- (0, 0, 1) -- (1, 0, 1) -- (1, 0, 0) -- cycle;
\fill[thick, fill=facecolor, draw=black, shift={(0,0,0)}]
(0, 0, 0) -- (0, 0, 1) -- (1, 0, 1) -- (1, 0, 0) -- cycle;
\fill[thick, fill=facecolor, draw=black, shift={(0,1,-1)}]
(0, 0, 0) -- (0, 1, 0) -- (0, 1, 1) -- (0, 0, 1) -- cycle;
\fill[thick, fill=facecolor, draw=black, shift={(0,0,0)}]
(0, 0, 0) -- (1, 0, 0) -- (1, 1, 0) -- (0, 1, 0) -- cycle;
\fill[thick, fill=facecolor, draw=black, shift={(0,0,0)}]
(0, 0, 0) -- (0, 1, 0) -- (0, 1, 1) -- (0, 0, 1) -- cycle;
\fill (0,0,0) circle (2.5pt);
\end{tikzpicture}}%
};
\draw[|->, decoration={markings,mark=at position 1 with {\arrow[scale=1.7]{>}}}, postaction={decorate}, shorten >=0.4pt ]
(a) -- node [above] {$\hat\sigma$} (b);
\draw[|->, decoration={markings,mark=at position 1 with {\arrow[scale=1.7]{>}}}, postaction={decorate}, shorten >=0.4pt ]
(c) -- node [above] {$\mathbf E$} (d);
\draw[|->, decoration={markings,mark=at position 1 with {\arrow[scale=1.7]{>}}}, postaction={decorate}, shorten >=0.4pt ]
(a) -- node [left] {$\varphi_0$
} (c);
\draw[|->, decoration={markings,mark=at position 1 with {\arrow[scale=1.7]{>}}}, postaction={decorate}, shorten >=0.4pt ]
(b) -- node [right] {$\varphi_0$
} (d);
\end{tikzpicture}
\end{center}

We now establish the relation~(\ref{eq:commutation}) when $P$ is a tile $T'$.
Let $T,T'$ be tiles of $\sigma^\infty(C)$. 
Recall that, by definition of the pointed substitution $\hat\sigma$,
we have $\hat\sigma(T',T) = (\sigma(T'),\bt(T))$.
By Lemma~\ref{lem:changement tuile de base} and Proposition~\ref{prop:M-omega},
we have
\begin{align*}
\varphi_0(\sigma(T'),\bt(T)) &= \varphi_0(\sigma(T'),\bt(T')) + \omega_0(\bt(T),\bt(T')) \\
    &= \varphi_0(\hat\sigma(T',T')) + \mathbf M_s^{-1} \omega_0(T,T').
\end{align*}
Using relation (\ref{eq:Phi}) and Equation~\ref{equE}, we get that:
\begin{align*}
\varphi_0(\hat\sigma(T',T)) &= \EOSS (\varphi_0(T',T')) +  \mathbf M_s^{-1} \omega_0(T,T') \\  
  &= \EOSS (\varphi_0(T',T') + \omega_0(T,T') ).
\end{align*}
Using again Lemma~\ref{lem:changement tuile de base}, we get:
\begin{equation}\label{eq:commutation'}
\varphi_0(\sigma(T'),\bt(T)) = \EOSS (\varphi_0(T',T) ).
\end{equation}

We now prove the relation~(\ref{eq:commutation}) in full generality.
Let $P$ be a patch in $\sigma^\infty(C)$ and let $T$ be a tile of $\sigma^\infty(C)$. 
Since 
$$\sigma(P)=\bigcup_{T''\in P}\sigma(T''),$$ 
Lemma~\ref{lem:partition} ensures that
\begin{align*}
\varphi_0(\hat\sigma(P,T)) & = \varphi_0(\sigma(P),\bt(T)) \\
  &= \displaystyle\sum_{T''\in P} \varphi_0( \sigma(T''),\bt(T) ) \\
  &= \displaystyle\sum_{T''\in P} \varphi_0( \hat\sigma(T'',T) ).
\end{align*}
We conclude by using relation~(\ref{eq:commutation'}) and the additivity of the map $\EOSS$:
\begin{align*}
\varphi_0(\hat\sigma(P,T)) &= \displaystyle\sum_{T''\in P} \EOSS\Big( \varphi_0(T'',T) \Big)\\  
  &= \displaystyle  \EOSS\left(\sum_{T''\in P} \varphi_0(T'',T) \right)\\  
  &= \displaystyle  \EOSS(\varphi_0(P,T) ).
\end{align*}
\end{proof}

%%%%%%%%%%%%%%%%%%%%%
\subsection{From $\sigma^\infty(C)$ to $\Sstep$}
%%%%%%%%%%%%%%%%%%%%%

%%%%%%%%%%%%%%%%%%%%%
\subsubsection{The map induced by $\varphi_0$}\label{subset:varphi_0}\label{sec:varphi_0}
%%%%%%%%%%%%%%%%%%%%%

Proposition \ref{propcle} implies that for every integer $n$,
$$\varphi_0\circ\hat\sigma^n(C,C)=\EOSS^n\circ\varphi_0(C,C)=\EOSS^n(\mathcal U).$$
In what follows, it is convenient to identify a multiset and its support as explained in Remark~\ref{rem-support}.
Since $\EOSS^n(\mathcal U)$ converges to the stepped surface $\Sstep=\bigcup_{n\in\mathbb N}\EOSS^n(\mathcal U)$ (see Proposition \ref{prop:ai2}) and since $\sigma^n(C)$ converges to $\sigma^\infty(C)$, the map $\varphi_0$ induces a map, still denoted by $\varphi_0$ such that:

\begin{itemize}
\item for every tile $T$ of $\sigma^\infty(C)$, $\varphi_0(T,C)$ is a subset of $\Sstep$,
\item $\Sstep=\displaystyle\bigcup_{T \text{ tile of } \sigma^\infty(C)}\varphi_0(T,C)$.
\end{itemize}

\begin{lemma}\label{lem:Psi}
For every tile $T$ of $\sigma^\infty(C)$, there exists a unique facet $[\mathbf x,i]^*$ in $\Sstep$ such that 
$$\varphi_0(T,C)=\EOSS^3([\mathbf x,i]^*).$$
\end{lemma}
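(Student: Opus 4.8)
The plan is to verify the statement in three stages: produce a facet $[\mathbf x,i]^*$ with $\varphi_0(T,C)=\EOSS^3([\mathbf x,i]^*)$, show that it is the only one, and check that it actually lies on $\Sstep$.

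\emph{Existence.} By Lemma~\ref{lem:changement tuile de base} together with Remark~\ref{rem:Phi varphi_0} one has $\varphi_0(T,C)=\Phi(T)+\omega_0(C,T)$, so everything reduces to the three prototiles. The alternative formulas for $\Phi$ already exhibit each $\Phi(T)$ as $\EOSS^3$ of a single facet up to an integer translation, namely $\Phi(A)=\EOSS^3([\mathbf 0,3]^*)+\mathbf e_3-\mathbf e_1$, $\Phi(B)=\EOSS^3([\mathbf 0,2]^*)+\mathbf e_2-\mathbf e_1$ and $\Phi(C)=\EOSS^3([\mathbf 0,1]^*)$. Iterating the covariance relation~(\ref{equE}) gives $\EOSS^3([\mathbf y,j]^*)+\mathbf v=\EOSS^3\big([\mathbf y+\Ms^3\mathbf v,\,j]^*\big)$, which lets me absorb both the constant translation $\mathbf u_{\type(T)}$ and the position vector $\omega_0(C,T)$ into the argument of $\EOSS^3$. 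This yields $\varphi_0(T,C)=\EOSS^3([\mathbf x,i]^*)$ with $i=\theta(\type(T))$ and $\mathbf x=\Ms^3\big(\omega_0(C,T)+\mathbf u_{\type(T)}\big)$, i.e. precisely the facet entering the definition~(\ref{eq:Psi}) of $\Psi$ (up to the orientation convention for $\omega_0$ attached to the base tile).

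\emph{Uniqueness.} This is a formal consequence of the injectivity of $\EOSS$ on the set $\mathcal F$ of all facets, itself a consequence of the unimodularity of $\Ms$: from $\EOSS([\mathbf x,i]^*)$ one reads off both the common base vertex $\Ms^{-1}\mathbf x$ and the type $i$ (the three model patterns of Definition~\ref{def:eos} are pairwise distinct), hence recovers $[\mathbf x,i]^*$. Thus $\EOSS$, and therefore $\EOSS^3$, is injective, so the facet produced above is unique, a fortiori among the facets of $\Sstep$.

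\emph{Membership --- the main obstacle.} It remains to prove $[\mathbf x,i]^*\in\Sstep$. The natural tool is that $\EOSS$ restricts to a bijection of $\Sstep$: indeed $\EOSS(\Sstep)=\Sstep$ because $\Sstep=\bigcup_n\EOSS^n(\mathcal U)$ and $\mathcal U\subseteq\EOSS(\mathcal U)$, while the images of two distinct facets of $\Sstep$ are disjoint, since any two of them already occur in a common $\EOSS^n(\mathcal U)$ whose image $\EOSS^{n+1}(\mathcal U)$ is multiplicity-free by Proposition~\ref{prop:ai2}. Hence $\Sstep$ is the disjoint union of the canonical blocks $\EOSS^3(g)$, $g\in\Sstep$. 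By the inducing property recalled just before the lemma, $\varphi_0(T,C)=\EOSS^3([\mathbf x,i]^*)$ is a complete $\EOSS^3$-block contained in $\Sstep$, and I want to conclude that its root $[\mathbf x,i]^*$ is one of these surface facets $g$. The genuinely non-formal point is that a complete block contained in $\Sstep$ need \emph{not} a priori be canonical: for instance $\EOSS(\mathcal U)=\EOSS^3([-\mathbf e_1,2]^*)$ is a complete block lying in $\Sstep$ whose root $[-\mathbf e_1,2]^*$ is not in $\Sstep$. Ruling this out is exactly where the precise value $\mathbf x=\Ms^3(\omega_0(C,T)+\mathbf u_{\type(T)})$, hence the careful construction of $\omega_0$, is used: one exploits that $\varphi_0(T,C)$ is the image of a \emph{single} tile and that the blocks $\varphi_0(T',C)$ cover $\Sstep$, which forces the block to coincide with a canonical one rather than to sit strictly inside it. I expect this matching step to be the hardest part, the existence and uniqueness being essentially bookkeeping.
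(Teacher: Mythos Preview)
Your existence and uniqueness arguments are correct and coincide with the paper's own proof: it too writes $\varphi_0(T,C)=\EOSS^3([\mathbf 0,i]^*)+\mathbf u_i+\omega_0(T,C)$, absorbs the translation via the covariance relation to get $\mathbf x=\Ms^3(\omega_0(T,C)+\mathbf u_i)$, and quotes the Arnoux--Ito injectivity statement (Lemma~3 of \cite{Arn.Ito.01}) for uniqueness.

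The gap is the membership step, which you explicitly leave unfinished. The paper dispatches it in two lines using exactly the ``natural tool'' you describe and then set aside: from $\varphi_0(T,C)\subseteq\Sstep=\EOSS^3(\Sstep)$ one picks $g\in\Sstep$ with $\EOSS^3(g)$ meeting $\varphi_0(T,C)=\EOSS^3([\mathbf x,i]^*)$, and invokes the Arnoux--Ito injectivity lemma again to conclude $[\mathbf x,i]^*=g\in\Sstep$. Your counterexample $\EOSS^3([-\mathbf e_1,2]^*)=\EOSS(\mathcal U)\subset\Sstep$ with $[-\mathbf e_1,2]^*\notin\Sstep$ is correct and shows that the injectivity statement, \emph{as paraphrased in this paper}, fails for arbitrary facets; the paper does not engage with this subtlety and simply defers to \cite{Arn.Ito.01}. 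So you have correctly located the one delicate point but not resolved it: neither your sketch of comparing the two partitions $\{\varphi_0(T',C)\}_{T'}$ and $\{\EOSS^3(g)\}_{g\in\Sstep}$ of $\Sstep$, nor the obvious alternative of an induction on $n$ with $T\in\sigma^n(C)$ via Proposition~\ref{propcle}, is actually carried out.
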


\begin{proof}
First, we recall that Lemma 3 of \cite{Arn.Ito.01} states that the map $\EOSS$ is ``injective'': precisely, if $\EOSS([\mathbf x,i]^*)$ and $\EOSS([\mathbf x',i']^*)$ have a facet in common, then $\mathbf x=\mathbf x'$ and $i=i'$.
This provides the unicity of the facet $[\mathbf x,i]^*$ in the lemma (if it exists).

We know that $\varphi_0(T,C)=\Phi(T)+\omega_0(T,C)$. By definition of $\Phi$, there exists an integer $i$ such that $\varphi_0(T,C)=\EOSS^3([0,i]^*)+\mathbf u_i+\omega_0(T,C)$ with $$\mathbf u_1=\mathbf e_3-\mathbf e_1=\mathbf M_s^{-3} (-2\mathbf e_1-\mathbf e_2), \mathbf u_2=\mathbf e_2-\mathbf e_1=\mathbf M_s^{-3}(-\mathbf e_1), \mathbf u_3=\mathbf 0.$$
That is to say: 
\begin{equation*}
\varphi_0(T,C)=\EOSS^3([\mathbf x,i]^*)\quad \text{with}\quad 
\mathbf x=\mathbf M_s^3(\omega_0(T,c)+\mathbf u_i).
\end{equation*}

We claim that $[\mathbf x,i]^*$ lies in $\Sstep$. Indeed since $\Sstep=\EOSS^3(\Sstep)$, we know that there exists facets of $\Sstep$ which images by $\EOSS^3$ cover $\varphi_0(T,C)$. By Lemma 3 of \cite{Arn.Ito.01} again, we conclude that the 
facet $[\mathbf x,i]^*$ is lying in $\Sstep$.
\end{proof}

%%%%%%%%%%%%%%%%%%%%%
\subsubsection{The map $\Psi$}\label{subset:psi}
%%%%%%%%%%%%%%%%%%%%%

We are now in position to define a bijection $\Psi$ from the set of tiles of $\sigma^\infty(C)$ to the set of facets of $\Sstep$:
$$\Psi(T)=[\mathbf x,i]^*,\quad\text{where}\quad \varphi_0(T)=\EOSS^3([\mathbf x,i]^*).$$ 

Moreover, since $\Phi(C)=\EOSS^3([\mathbf 0,1]^*)$, $\Phi(B)=\EOSS^3([\mathbf 0,2]^*)$ and $\Phi(A)=\EOSS^3([\mathbf 0,3]^*),$
we see that 
$$\type(T)=A \Leftrightarrow \type(\Psi(T))=3,$$
$$\type(T)=B \Leftrightarrow \type(\Psi(T))=2,$$
$$\type(T)=C \Leftrightarrow \type(\Psi(T))=1.$$

We set $\theta(A)=3$, $\theta(B)=2$, $\theta(C)=1$. We summarize the previous discussion in the following proposition:

\begin{theorem}\label{theo:Psi}
The map $\Psi$ defined, for every tile $T$ of $\sigma^\infty(C)$, by:
$$\Psi(T)=[\mathbf M_s^3(\omega_0(T,C)+\mathbf u_{\type(T)}),\theta(\type(T))]^*$$
is a bijection from the set of tiles of $\sigma^\infty(C)$ to the set of facets of $\Sigma_{step}$.
\end{theorem}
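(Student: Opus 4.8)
The plan is to notice that the explicit formula for $\Psi$ is, term by term, exactly what the computation in the proof of Lemma~\ref{lem:Psi} produces (with $\theta$ recording the type dictionary $A\mapsto3,\,B\mapsto2,\,C\mapsto1$), so that well-definedness is already in hand and the only thing left to prove is that $\Psi$ is a bijection. I would reduce both injectivity and surjectivity to one structural statement: the family $\{\varphi_0(T,C)\}_T$, indexed by the tiles $T$ of $\sigma^\infty(C)$, coincides with the decomposition of the facets of $\Sstep$ into the blocks $\EOSS^3(f)$, where $f$ ranges over the facets of $\Sstep$.

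First I would check that this block decomposition really is a partition. Since $\Sstep=\bigcup_n\EOSS^n(\mathcal U)$ is an increasing union (Proposition~\ref{prop:ai2}), applying $\EOSS$ merely shifts the index, so $\EOSS(\Sstep)=\Sstep$ and hence $\EOSS^3(\Sstep)=\Sstep$. Because $\EOSS$ is injective on facets by Lemma~3 of \cite{Arn.Ito.01}, a short induction shows $\EOSS^3$ is injective in the strong sense that $\EOSS^3(f)\cap\EOSS^3(f')\neq\emptyset$ forces $f=f'$. Thus $\{\EOSS^3(f):f\text{ facet of }\Sstep\}$ is a genuine partition of the facets of $\Sstep$, and by Lemma~\ref{lem:Psi} each $\varphi_0(T,C)=\EOSS^3(\Psi(T))$ is one of its blocks.

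The crux is the pairwise disjointness of the blocks $\varphi_0(T,C)$. Fix $n$ large enough that two given tiles $T,T'$ both lie in $\sigma^n(C)$. Decomposing $\sigma^n(C)$ tile by tile and applying Lemma~\ref{lem:partition} repeatedly gives the multiset identity $\varphi_0(\sigma^n(C),C)=\sum_{S\in\sigma^n(C)}\varphi_0(S,C)$, so the multiplicities add. On the other hand, iterating Proposition~\ref{propcle} and using Remark~\ref{rem:Phi varphi_0} (so that $\varphi_0(C,C)=\Phi(C)=\EOSS^3([\mathbf 0,1]^*)$), then correcting the base tile via Lemma~\ref{lem:changement tuile de base}, identifies $\varphi_0(\sigma^n(C),C)$ with a $\mathbb Z^3$-translate of $\EOSS^{n+3}([\mathbf 0,1]^*)$, which lies in $\mathcal M^\circ$ because $\EOSS$ is injective. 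Every multiplicity of the sum is therefore at most $1$, which is possible only if the supports $\varphi_0(S,C)$ are pairwise disjoint. As any two tiles sit in a common $\sigma^n(C)$, the blocks $\{\varphi_0(T,C)\}_T$ are pairwise disjoint, and injectivity of $\Psi$ follows at once: $\Psi(T)=\Psi(T')$ gives $\varphi_0(T,C)=\varphi_0(T',C)$, and since each block is nonempty, disjointness forces $T=T'$.

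For surjectivity I would combine this disjointness with the covering identity $\Sstep=\bigcup_T\varphi_0(T,C)$ from Section~\ref{sec:varphi_0}: the blocks $\varphi_0(T,C)=\EOSS^3(\Psi(T))$ then form a pairwise disjoint subfamily of the partition $\{\EOSS^3(f)\}_f$ whose union is all of $\Sstep$, so this subfamily must be the whole partition; by injectivity of $\EOSS^3$ every facet $f$ of $\Sstep$ equals $\Psi(T)$ for some tile $T$. I expect the disjointness step to be the real obstacle, and in particular the bookkeeping of the base tile (through Lemma~\ref{lem:changement tuile de base}) that is needed to recognize $\varphi_0(\sigma^n(C),C)$ as a translate of the $\mathcal M^\circ$-multiset $\EOSS^{n+3}([\mathbf 0,1]^*)$; once the block partition and this $\mathcal M^\circ$ property are in place, both halves of the bijection are formal.
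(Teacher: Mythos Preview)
Your proposal is correct and follows essentially the same route as the paper, which presents Theorem~\ref{theo:Psi} as a summary of Section~\ref{sec:varphi_0} and Lemma~\ref{lem:Psi} rather than giving a standalone proof. Your explicit injectivity argument via the $\mathcal M^\circ$ property of $\varphi_0(\sigma^n(C),C)$ is exactly the content the paper leaves implicit in the line $\varphi_0\circ\hat\sigma^n(C,C)=\EOSS^n\circ\varphi_0(C,C)$; note also that no base-tile correction is actually needed there, since by construction $\hat\sigma^n(C,C)=(\sigma^n(C),C)$, so the translate you anticipate is by $\mathbf 0$.
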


%%%%%%%%%%%%%%%%%%%%%%%%%%%%%%%%%%%%%
%%%%%%%%%%%%%%%%%%%%%%%%%%%%%%%%%%%%%
\subsection{Link between two tilings $\Ttop$ and $\Tstep$}
%%%%%%%%%%%%%%%%%%%%%%%%%%%%%%%%%%%%%
%%%%%%%%%%%%%%%%%%%%%%%%%%%%%%%%%%%%%

\subsubsection{Theorem~\ref{theo:Psi} revisited}

We recall that, according to Proposition~\ref{prop:topotiling}, $\sigma^\infty(C)$ can be realized as the tiling $\Ttop$,
and according to Proposition~\ref{prop:ai2} the tiling $\Tstep$ is the ``image'' of the stepped surface $\Sstep$ by the projection $\pi_\beta:\Sstep\rightarrow \mathcal P$.
Hence, Theorem~\ref{theo:Psi} explains exactly how the two tilings $\Ttop$ and $\Tstep$ are related. In particular:
\begin{itemize}
\item The map $\Psi$ sends tiles of the same type to tiles of the same type.
\item In $\Ttop$, the way to locate a tile $T$ with respect to another tile $T'$, via the identification with $\sigma^\infty(C)$, is by using the position $\omega_0(T,T')$. In $\Tstep$, the corresponding tiles $\Psi(T)$ and $\Psi(T')$ then will differ from the vector $\pi_\beta(\omega_0(T,T'))$.
\end{itemize}

\subsubsection{Via the Delone set $\mathcal D=\cup_{i\in\{1,2,3\}}\mathcal D_i$}

We would like to explicit the link between $\Ttop$ and $\Tstep$ in the same spirit of what we explain in Remark~\ref{rem:Tfrac Ttop} and Remark~\ref{rem:D_i}.
For that, we first notice that, alternatively, $\sigma^\infty(C)$ can be geometrized as follow.
According to Lemma~\ref{lem:Psi}, the set $\{\pi_\beta ( \varphi_0(T,C) )  \;|\; T \text{ tile of } \sigma^\infty(C) \}$ tiles the plane $\mathcal P$, and the resulting tiling is a geometric realization of $\sigma^\infty(C)$.
The definition of $\Psi$ in Section~\ref{sec:stepped} gives us a base point in $\Phi(A)$, $\Phi(B)$ and $\Phi(C)$, and consequently, gives rise to a base point $\mathbf x_T$ in each $\varphi_0(T,C)$.
The set $\{\pi_\beta(\mathbf x_T) \;|\; T \text{ tile of } \sigma^\infty(C)\}$ is a Delone set in $\mathcal P$. However, it is not equal to the set $\mathcal D$ of Remark~\ref{rem:D_i}: 
indeed, it is equal to $\mathbf M_s^{-3} \mathcal D$, see the proof of Lemma~\ref{lem:Psi}.

This leads us to do what follows.
For each tile $T$ of $\sigma^\infty(C)$, we consider
$$T_\textsf{geo} = \pi_\beta ( \mathbf M_s^3 ( \varphi_0(T,C) ) ) \subset \mathcal P.$$
The set $\{T_\textsf{geo}  \;|\; T \text{ tile of } \sigma^\infty(C) \}$ tiles the plane $\mathcal P$. This tilling is again a geometric realization of $\sigma^\infty(C)$, and we denote it by $\mathcal T'_\textsf{top}$. 
By construction, $\pi_\beta(\mathbf M_s^3 \mathbf x_T)$ lies in $T_\textsf{geo}$, and the set $\{ \pi_\beta(\mathbf M_s^3 \mathbf x_T) \;|\; T \text{ tile of } \sigma^\infty(C) \}$ is precisely the Delone set $\mathcal D$. 
Moreover, Theorem~\ref{theo:Psi} ensures that the subset consisting of vectors $\pi_\beta(\mathbf M_s^3 \mathbf x_T)$ with $\theta(\type(T))=i$ ($i\in\{1,2,3\}$) is precisely the set $\mathcal D_i$ of  Remark~\ref{rem:D_i}.

To sum up: 
the tiling $\mathcal T'_\textsf{top}$ is obtained by putting in $\mathcal P$ a tile of type $i$ (i.e. a translated image of $T_\textsf{geo}$ with $\theta(\type(T))=i$) at each vector in $\mathcal D_i$.
This explicits the strong relation between the two tilings $\mathcal T'_\textsf{top}$ and $\Tstep$, 
and thus also with $\Tfrac$ via Remark~\ref{rem:Tfrac Ttop}.

\subsubsection{The map $\Psi$ as a ``two-dimensional sliding block code''}

It is worth to remark that, by definition of $\Psi$, and because the position map $\omega_0$ is additive, there is an elementary way to rebuild $\Tstep$ from $\Ttop$ just by looking at local configurations of tiles. 
We give in Figure~\ref{fig:Psiblock} all the information needed for doing that.

\begin{figure}[h!]
\begin{align*}
%%%%%%%%%%%%%%%%%%
%%% AB
\myvcenter{\begin{tikzpicture}[x={(-0.216506cm,-0.125000cm)}, y={(0.216506cm,-0.125000cm)}, z={(0.000000cm,0.250000cm)}]
\tuileA{A}{(0,0,0)}{white}{}{}{}{}{}{}
\tuileB{B}{(0,0,0)}{black!20}{}{}{}{}{}{}
\end{tikzpicture}}
& \ \leftrightarrow \
\myvcenter{\begin{tikzpicture}
[x={(-0.216506cm,-0.125000cm)}, y={(0.216506cm,-0.125000cm)}, z={(0.000000cm,0.250000cm)}]
\definecolor{facecolor}{rgb}{0.8,0.8,0.8}
\fill[thick, fill=facecolor, draw=black, shift={(0,0,0)}]
(0, 0, 0) -- (0, 0, 1) -- (1, 0, 1) -- (1, 0, 0) -- cycle;
\definecolor{facecolor}{rgb}{1,1,1}
\fill[thick, fill=facecolor, draw=black, shift={(0,0,0)}]
(0, 0, 0) -- (1, 0, 0) -- (1, 1, 0) -- (0, 1, 0) -- cycle;
\end{tikzpicture}}
&
%%% AB
\myvcenter{\begin{tikzpicture}[x={(-0.216506cm,-0.125000cm)}, y={(0.216506cm,-0.125000cm)}, z={(0.000000cm,0.250000cm)}]
\tuileA{A}{(0,0,0)}{white}{}{}{}{}{}{}
\tuileB{B}{(0,2,-3)}{black!20}{}{}{}{}{}{}
\end{tikzpicture}}
& \ \leftrightarrow \
\myvcenter{\begin{tikzpicture}
[x={(-0.216506cm,-0.125000cm)}, y={(0.216506cm,-0.125000cm)}, z={(0.000000cm,0.250000cm)}]
\definecolor{facecolor}{rgb}{0.8,0.8,0.8}
\fill[thick, fill=facecolor, draw=black, shift={(0,0,0)}]
(0, 0, 0) -- (0, 0, 1) -- (1, 0, 1) -- (1, 0, 0) -- cycle;
\definecolor{facecolor}{rgb}{1,1,1}
\fill[thick, fill=facecolor, draw=black, shift={(0,-1,1)}]
(0, 0, 0) -- (1, 0, 0) -- (1, 1, 0) -- (0, 1, 0) -- cycle;
\end{tikzpicture}}
&
%%% AC
\myvcenter{\begin{tikzpicture}[x={(-0.216506cm,-0.125000cm)}, y={(0.216506cm,-0.125000cm)}, z={(0.000000cm,0.250000cm)}]
\tuileA{A}{(0,0,0)}{white}{}{}{}{}{}{}
\tuileC{C}{(0,0,0)}{black!20}{}{}{}{}{}{}{}{}{}{}
\end{tikzpicture}}
& \ \leftrightarrow \
\myvcenter{\begin{tikzpicture}
[x={(-0.216506cm,-0.125000cm)}, y={(0.216506cm,-0.125000cm)}, z={(0.000000cm,0.250000cm)}]
\definecolor{facecolor}{rgb}{1,1,1}
\fill[thick, fill=facecolor, draw=black, shift={(0,0,0)}]
(0, 0, 0) -- (1, 0, 0) -- (1, 1, 0) -- (0, 1, 0) -- cycle;
\definecolor{facecolor}{rgb}{0.8,0.8,0.8}
\fill[thick, fill=facecolor, draw=black, shift={(0,0,0)}]
(0, 0, 0) -- (0, 1, 0) -- (0, 1, 1) -- (0, 0, 1) -- cycle;
\end{tikzpicture}}
&
%%% AC
\myvcenter{\begin{tikzpicture}[x={(-0.216506cm,-0.125000cm)}, y={(0.216506cm,-0.125000cm)}, z={(0.000000cm,0.250000cm)}]
\tuileA{A}{(0,0,0)}{white}{}{}{}{}{}{}
\tuileC{C}{(2,-1,-2)}{black!20}{}{}{}{}{}{}{}{}{}{}
\end{tikzpicture}}
& \ \leftrightarrow \
\myvcenter{\begin{tikzpicture}
[x={(-0.216506cm,-0.125000cm)}, y={(0.216506cm,-0.125000cm)}, z={(0.000000cm,0.250000cm)}]
\definecolor{facecolor}{rgb}{1,1,1}
\fill[thick, fill=facecolor, draw=black, shift={(-1,0,1)}]
(0, 0, 0) -- (1, 0, 0) -- (1, 1, 0) -- (0, 1, 0) -- cycle;
\definecolor{facecolor}{rgb}{0.8,0.8,0.8}
\fill[thick, fill=facecolor, draw=black, shift={(0,0,0)}]
(0, 0, 0) -- (0, 1, 0) -- (0, 1, 1) -- (0, 0, 1) -- cycle;
\end{tikzpicture}} \\
%%%%%%%%%%%%%%%%%%
%%% BC
\myvcenter{\begin{tikzpicture}[x={(-0.216506cm,-0.125000cm)}, y={(0.216506cm,-0.125000cm)}, z={(0.000000cm,0.250000cm)}]
\tuileB{B}{(0,0,0)}{white}{}{}{}{}{}{}
\tuileC{C}{(0,0,0)}{black!20}{}{}{}{}{}{}{}{}{}{}
\end{tikzpicture}}
& \ \leftrightarrow \
\myvcenter{\begin{tikzpicture}
[x={(-0.216506cm,-0.125000cm)}, y={(0.216506cm,-0.125000cm)}, z={(0.000000cm,0.250000cm)}]
\definecolor{facecolor}{rgb}{1,1,1}
\fill[thick, fill=facecolor, draw=black, shift={(0,0,0)}]
(0, 0, 0) -- (0, 0, 1) -- (1, 0, 1) -- (1, 0, 0) -- cycle;
\definecolor{facecolor}{rgb}{0.8,0.8,0.8}
\fill[thick, fill=facecolor, draw=black, shift={(0,0,0)}]
(0, 0, 0) -- (0, 1, 0) -- (0, 1, 1) -- (0, 0, 1) -- cycle;
\end{tikzpicture}}
&
%%% BC
\myvcenter{\begin{tikzpicture}[x={(-0.216506cm,-0.125000cm)}, y={(0.216506cm,-0.125000cm)}, z={(0.000000cm,0.250000cm)}]
\tuileB{B}{(0,0,0)}{white}{}{}{}{}{}{}
\tuileC{C}{(2,-3,1)}{black!20}{}{}{}{}{}{}{}{}{}{}
\end{tikzpicture}}
& \ \leftrightarrow \
\myvcenter{\begin{tikzpicture}
[x={(-0.216506cm,-0.125000cm)}, y={(0.216506cm,-0.125000cm)}, z={(0.000000cm,0.250000cm)}]
\definecolor{facecolor}{rgb}{1,1,1}
\fill[thick, fill=facecolor, draw=black, shift={(0,0,0)}]
(0, 0, 0) -- (0, 0, 1) -- (1, 0, 1) -- (1, 0, 0) -- cycle;
\definecolor{facecolor}{rgb}{0.8,0.8,0.8}
\fill[thick, fill=facecolor, draw=black, shift={(1,-1,0)}]
(0, 0, 0) -- (0, 1, 0) -- (0, 1, 1) -- (0, 0, 1) -- cycle;
\end{tikzpicture}}
&
%%% BB
\myvcenter{\begin{tikzpicture}[x={(-0.216506cm,-0.125000cm)}, y={(0.216506cm,-0.125000cm)}, z={(0.000000cm,0.250000cm)}]
\tuileB{B}{(0,0,0)}{white}{}{}{}{}{}{}
\tuileB{B}{(-1,0,2)}{black!20}{}{}{}{}{}{}
\end{tikzpicture}}
& \ \leftrightarrow \
\myvcenter{\begin{tikzpicture}
[x={(-0.216506cm,-0.125000cm)}, y={(0.216506cm,-0.125000cm)}, z={(0.000000cm,0.250000cm)}]
\definecolor{facecolor}{rgb}{1,1,1}
\fill[thick, fill=facecolor, draw=black, shift={(0,0,0)}]
(0, 0, 0) -- (0, 0, 1) -- (1, 0, 1) -- (1, 0, 0) -- cycle;
\definecolor{facecolor}{rgb}{0.8,0.8,0.8}
\fill[thick, fill=facecolor, draw=black, shift={(0,0,1)}]
(0, 0, 0) -- (0, 0, 1) -- (1, 0, 1) -- (1, 0, 0) -- cycle;
\end{tikzpicture}}
&
%%% CC
\myvcenter{\begin{tikzpicture}[x={(-0.216506cm,-0.125000cm)}, y={(0.216506cm,-0.125000cm)}, z={(0.000000cm,0.250000cm)}]
\tuileC{C}{(0,0,0)}{white}{}{}{}{}{}{}{}{}{}{}
\tuileC{C}{(-1,2,-1)}{black!20}{}{}{}{}{}{}{}{}{}{}
\end{tikzpicture}}
& \ \leftrightarrow \
\myvcenter{\begin{tikzpicture}
[x={(-0.216506cm,-0.125000cm)}, y={(0.216506cm,-0.125000cm)}, z={(0.000000cm,0.250000cm)}]
\definecolor{facecolor}{rgb}{0.8,0.8,0.8}
\fill[thick, fill=facecolor, draw=black, shift={(0,1,0)}]
(0, 0, 0) -- (0, 1, 0) -- (0, 1, 1) -- (0, 0, 1) -- cycle;
\definecolor{facecolor}{rgb}{1,1,1}
\fill[thick, fill=facecolor, draw=black, shift={(0,0,0)}]
(0, 0, 0) -- (0, 1, 0) -- (0, 1, 1) -- (0, 0, 1) -- cycle;
\end{tikzpicture}} \\
%%%%%%%%%%%%%%%%%%
%%% CC
\myvcenter{\begin{tikzpicture}[x={(-0.216506cm,-0.125000cm)}, y={(0.216506cm,-0.125000cm)}, z={(0.000000cm,0.250000cm)}]
\tuileC{C}{(0,0,0)}{white}{}{}{}{}{}{}{}{}{}{}
\tuileC{C}{(-1,0,2)}{black!20}{}{}{}{}{}{}{}{}{}{}
\end{tikzpicture}}
& \ \leftrightarrow \
\myvcenter{\begin{tikzpicture}
[x={(-0.216506cm,-0.125000cm)}, y={(0.216506cm,-0.125000cm)}, z={(0.000000cm,0.250000cm)}]
\definecolor{facecolor}{rgb}{1,1,1}
\fill[thick, fill=facecolor, draw=black, shift={(0,0,0)}]
(0, 0, 0) -- (0, 1, 0) -- (0, 1, 1) -- (0, 0, 1) -- cycle;
\definecolor{facecolor}{rgb}{0.8,0.8,0.8}
\fill[thick, fill=facecolor, draw=black, shift={(0,0,1)}]
(0, 0, 0) -- (0, 1, 0) -- (0, 1, 1) -- (0, 0, 1) -- cycle;
\end{tikzpicture}}
&
%%% CC
\myvcenter{\begin{tikzpicture}[x={(-0.216506cm,-0.125000cm)}, y={(0.216506cm,-0.125000cm)}, z={(0.000000cm,0.250000cm)}]
\tuileC{C}{(0,0,0)}{white}{}{}{}{}{}{}{}{}{}{}
\tuileC{C}{(0,-2,3)}{black!20}{}{}{}{}{}{}{}{}{}{}
\end{tikzpicture}}
& \ \leftrightarrow \
\myvcenter{\begin{tikzpicture}
[x={(-0.216506cm,-0.125000cm)}, y={(0.216506cm,-0.125000cm)}, z={(0.000000cm,0.250000cm)}]
\definecolor{facecolor}{rgb}{1,1,1}
\fill[thick, fill=facecolor, draw=black, shift={(0,0,0)}]
(0, 0, 0) -- (0, 1, 0) -- (0, 1, 1) -- (0, 0, 1) -- cycle;
\definecolor{facecolor}{rgb}{0.8,0.8,0.8}
\fill[thick, fill=facecolor, draw=black, shift={(0,-1,1)}]
(0, 0, 0) -- (0, 1, 0) -- (0, 1, 1) -- (0, 0, 1) -- cycle;
\end{tikzpicture}}
&
%%% BC
\myvcenter{\begin{tikzpicture}[x={(-0.216506cm,-0.125000cm)}, y={(0.216506cm,-0.125000cm)}, z={(0.000000cm,0.250000cm)}]
\tuileB{B}{(0,0,0)}{white}{}{}{}{}{}{}
\tuileC{C}{(2,-1,-2)}{black!20}{}{}{}{}{}{}{}{}{}{}
\end{tikzpicture}}
& \ \leftrightarrow \
\myvcenter{\begin{tikzpicture}
[x={(-0.216506cm,-0.125000cm)}, y={(0.216506cm,-0.125000cm)}, z={(0.000000cm,0.250000cm)}]
\definecolor{facecolor}{rgb}{1,1,1}
\fill[thick, fill=facecolor, draw=black, shift={(-1,0,1)}]
(0, 0, 0) -- (0, 0, 1) -- (1, 0, 1) -- (1, 0, 0) -- cycle;
\definecolor{facecolor}{rgb}{0.8,0.8,0.8}
\fill[thick, fill=facecolor, draw=black, shift={(0,0,0)}]
(0, 0, 0) -- (0, 1, 0) -- (0, 1, 1) -- (0, 0, 1) -- cycle;
\end{tikzpicture}}
&
%%% BC
\myvcenter{\begin{tikzpicture}[x={(-0.216506cm,-0.125000cm)}, y={(0.216506cm,-0.125000cm)}, z={(0.000000cm,0.250000cm)}]
\tuileB{B}{(0,0,0)}{white}{}{}{}{}{}{}
\tuileC{C}{(0,-2,3)}{black!20}{}{}{}{}{}{}{}{}{}{}
\end{tikzpicture}}
& \ \leftrightarrow \
\myvcenter{\begin{tikzpicture}
[x={(-0.216506cm,-0.125000cm)}, y={(0.216506cm,-0.125000cm)}, z={(0.000000cm,0.250000cm)}]
\definecolor{facecolor}{rgb}{1,1,1}
\fill[thick, fill=facecolor, draw=black, shift={(0,0,0)}]
(0, 0, 0) -- (0, 0, 1) -- (1, 0, 1) -- (1, 0, 0) -- cycle;
\definecolor{facecolor}{rgb}{0.8,0.8,0.8}
\fill[thick, fill=facecolor, draw=black, shift={(0,-1,1)}]
(0, 0, 0) -- (0, 1, 0) -- (0, 1, 1) -- (0, 0, 1) -- cycle;
\end{tikzpicture}} \\
%%%%%%%%%%%%%%%%%%
%%% BC
\myvcenter{\begin{tikzpicture}[x={(-0.216506cm,-0.125000cm)}, y={(0.216506cm,-0.125000cm)}, z={(0.000000cm,0.250000cm)}]
\tuileB{B}{(0,0,0)}{white}{}{}{}{}{}{}
\tuileC{C}{(1,0,-2)}{black!20}{}{}{}{}{}{}{}{}{}{}
\end{tikzpicture}}
& \ \leftrightarrow \
\myvcenter{\begin{tikzpicture}
[x={(-0.216506cm,-0.125000cm)}, y={(0.216506cm,-0.125000cm)}, z={(0.000000cm,0.250000cm)}]
\definecolor{facecolor}{rgb}{1,1,1}
\fill[thick, fill=facecolor, draw=black, shift={(0,0,1)}]
(0, 0, 0) -- (0, 0, 1) -- (1, 0, 1) -- (1, 0, 0) -- cycle;
\definecolor{facecolor}{rgb}{0.8,0.8,0.8}
\fill[thick, fill=facecolor, draw=black, shift={(0,0,0)}]
(0, 0, 0) -- (0, 1, 0) -- (0, 1, 1) -- (0, 0, 1) -- cycle;
\end{tikzpicture}}
&
%%% BC
\myvcenter{\begin{tikzpicture}[x={(-0.216506cm,-0.125000cm)}, y={(0.216506cm,-0.125000cm)}, z={(0.000000cm,0.250000cm)}]
\tuileB{B}{(0,0,0)}{white}{}{}{}{}{}{}
\tuileC{C}{(1,-3,3)}{black!20}{}{}{}{}{}{}{}{}{}{}
\end{tikzpicture}}
& \ \leftrightarrow \
\myvcenter{\begin{tikzpicture}
[x={(-0.216506cm,-0.125000cm)}, y={(0.216506cm,-0.125000cm)}, z={(0.000000cm,0.250000cm)}]
\definecolor{facecolor}{rgb}{1,1,1}
\fill[thick, fill=facecolor, draw=black, shift={(-1,1,-1)}]
(0, 0, 0) -- (0, 0, 1) -- (1, 0, 1) -- (1, 0, 0) -- cycle;
\definecolor{facecolor}{rgb}{0.8,0.8,0.8}
\fill[thick, fill=facecolor, draw=black, shift={(0,0,0)}]
(0, 0, 0) -- (0, 1, 0) -- (0, 1, 1) -- (0, 0, 1) -- cycle;
\end{tikzpicture}}
\end{align*}
\caption{$\Psi$ and $\Psi^{-1}$ as a two-dimensional sliding block code.}
\label{fig:Psiblock}
\end{figure}
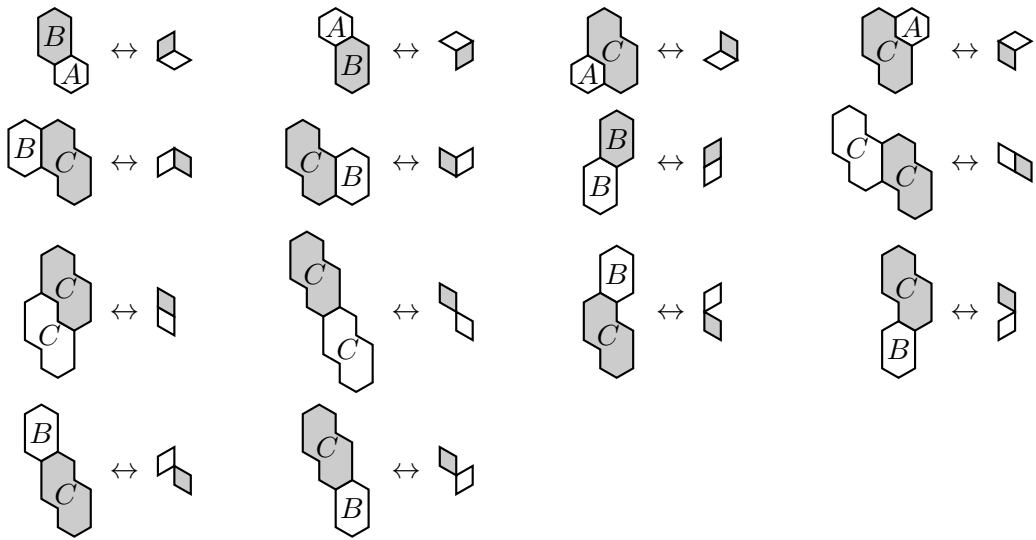

In practice, to construct $\Tstep$ from $\Ttop$, we can ignore the formula of Theorem~\ref{theo:Psi}
and simply use the recipe given in Figure~\ref{fig:Psiblock}. We choose a tile $T$ in $\Ttop$, and put in the plane a tile of $\Tstep$ of same type as $T$. Then we choose a tile $T'$ adjacent to $T$ and use Figure~\ref{fig:Psiblock} to place correctly the corresponding tile $\Psi(T')$ relatively to $\Psi(T)$. And we go on, inductively rebuilding $\Tstep$.

This can also be done to define $\Psi^{-1}$, see Figure~\ref{fig:Psiblock}.

This point of view on $\Psi$ remind us of the so-called \emph{sliding block code} in classical symbolic dynamics, see for instance~\cite{LM95}. In that spirit, $\psi$ could be called a \emph{two-dimensional sliding block code}.

\section{Concluding remarks}
\label{conclu}

\paragraph{Towards more general results}

The results presented in this article are specifically about the tilings associated
with the Tribonacci substitution and its associated Rauzy fractal tiling.
We have been able to get such results for some other examples of Pisot substitutions,
such as the one shown in Figure~\ref{fig:exother}.

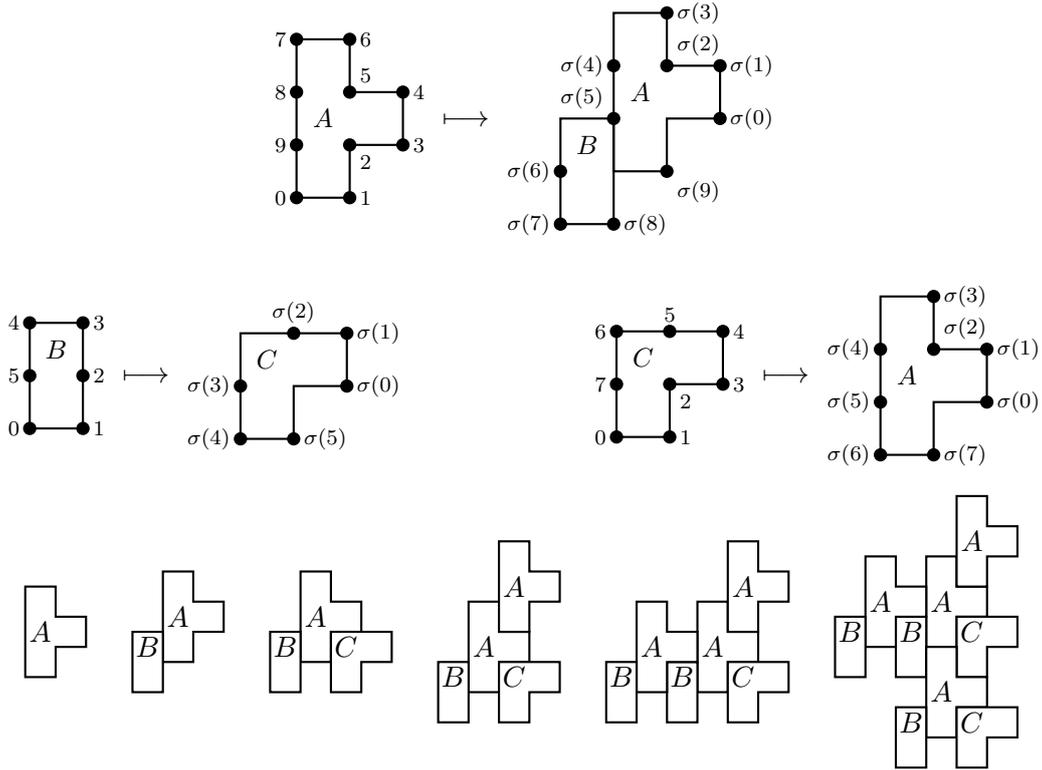
\begin{figure}[h!]
\centering
%A
\[
\myvcenter{%
\begin{tikzpicture}[x={(7mm,0mm)},y={(0mm,7mm)}]
\draw[thick,black]
(0, 0) -- (1, 0) -- (1, 1) -- (2, 1) -- (2, 2) -- (1, 2) -- (1, 3) -- (0, 3) -- (0, 2) -- (0, 1) -- cycle;
\fill (0,0) circle (2.5pt) node[left] {\scriptsize$0$};
\fill (1,0) circle (2.5pt) node[right] {\scriptsize$1$};
\fill (1,1) circle (2.5pt) node[below right] {\scriptsize$2$};
\fill (2,1) circle (2.5pt) node[right] {\scriptsize$3$};
\fill (2,2) circle (2.5pt) node[right] {\scriptsize$4$};
\fill (1,2) circle (2.5pt) node[above right] {\scriptsize$5$};
\fill (1,3) circle (2.5pt) node[right] {\scriptsize$6$};
\fill (0,3) circle (2.5pt) node[left] {\scriptsize$7$};
\fill (0,2) circle (2.5pt) node[left] {\scriptsize$8$};
\fill (0,1) circle (2.5pt) node[left] {\scriptsize$9$};
\node at (0.5,1.5) {\small $A$};
\end{tikzpicture}}
    \longmapsto
%imgA
\myvcenter{%
\begin{tikzpicture}[x={(7mm,0mm)},y={(0mm,7mm)}]
\draw[thick,black]
(0, 0) -- (1, 0) -- (1, 1) -- (2, 1) -- (2, 2) -- (1, 2) -- (1, 3) -- (0, 3) -- (0, 2) -- (0, 1) -- cycle;
\draw[thick,black]
(0, 0) -- (0, 1) -- (-1, 1) -- (-1, 0) -- (-1, -1) -- (0, -1) -- cycle;
\fill (1,0) circle (2.5pt) node[below right] {\scriptsize$\sigma(9)$};
\fill (2,1) circle (2.5pt) node[right] {\scriptsize$\sigma(0)$};
\fill (2,2) circle (2.5pt) node[right] {\scriptsize$\sigma(1)$};
\fill (1,2) circle (2.5pt) node[above right] {\scriptsize$\sigma(2)$};
\fill (1,3) circle (2.5pt) node[right] {\scriptsize$\sigma(3)$};
\fill (0,2) circle (2.5pt) node[left] {\scriptsize$\sigma(4)$};
\fill (0,1) circle (2.5pt) node[above left] {\scriptsize$\sigma(5)$};
\fill (-1,0) circle (2.5pt) node[left] {\scriptsize$\sigma(6)$};
\fill (-1,-1) circle (2.5pt) node[left] {\scriptsize$\sigma(7)$};
\fill (0,-1) circle (2.5pt) node[right] {\scriptsize$\sigma(8)$};
\node at (0.5,1.5) {\small $A$};
\node at (-0.5,0.5) {\small $B$};
\end{tikzpicture}}
%B
\]
\[
\myvcenter{%
\begin{tikzpicture}[x={(7mm,0mm)},y={(0mm,7mm)}]
\draw[thick,black]
(0, 0) -- (1, 0) -- (1, 1) -- (1, 2) -- (0, 2) -- (0, 1) -- cycle;
\fill (0,0) circle (2.5pt) node[left] {\scriptsize$0$};
\fill (1,0) circle (2.5pt) node[right] {\scriptsize$1$};
\fill (1,1) circle (2.5pt) node[right] {\scriptsize$2$};
\fill (1,2) circle (2.5pt) node[right] {\scriptsize$3$};
\fill (0,2) circle (2.5pt) node[left] {\scriptsize$4$};
\fill (0,1) circle (2.5pt) node[left] {\scriptsize$5$};
\node at (0.5,1.5) {\small $B$};
\end{tikzpicture}}
    \longmapsto
%imgB
\myvcenter{%
\begin{tikzpicture}[x={(7mm,0mm)},y={(0mm,7mm)}]
\draw[thick,black]
(0, 0) -- (1, 0) -- (1, 1) -- (2, 1) -- (2, 2) -- (1, 2) -- (0, 2) -- (0, 2) -- cycle;
\fill (0,0) circle (2.5pt) node[left] {\scriptsize$\sigma(4)$};
\fill (1,0) circle (2.5pt) node[right] {\scriptsize$\sigma(5)$};
\fill (2,1) circle (2.5pt) node[right] {\scriptsize$\sigma(0)$};
\fill (2,2) circle (2.5pt) node[right] {\scriptsize$\sigma(1)$};
\fill (1,2) circle (2.5pt) node[above] {\scriptsize$\sigma(2)$};
\fill (0,1) circle (2.5pt) node[left] {\scriptsize$\sigma(3)$};
\node at (0.5,1.5) {\small $C$};
\end{tikzpicture}}
\qquad
\qquad
\qquad
%C
\myvcenter{%
\begin{tikzpicture}[x={(7mm,0mm)},y={(0mm,7mm)}]
\draw[thick,black]
(0, 0) -- (1, 0) -- (1, 1) -- (2, 1) -- (2, 2) -- (1, 2) -- (0, 2) -- (0, 2) -- cycle;
\fill (0,0) circle (2.5pt) node[left] {\scriptsize$0$};
\fill (1,0) circle (2.5pt) node[right] {\scriptsize$1$};
\fill (1,1) circle (2.5pt) node[below right] {\scriptsize$2$};
\fill (2,1) circle (2.5pt) node[right] {\scriptsize$3$};
\fill (2,2) circle (2.5pt) node[right] {\scriptsize$4$};
\fill (1,2) circle (2.5pt) node[above] {\scriptsize$5$};
\fill (0,2) circle (2.5pt) node[left] {\scriptsize$6$};
\fill (0,1) circle (2.5pt) node[left] {\scriptsize$7$};
\node at (0.5,1.5) {\small $C$};
\end{tikzpicture}}
    \longmapsto
%imgC
\myvcenter{%
\begin{tikzpicture}[x={(7mm,0mm)},y={(0mm,7mm)}]
\draw[thick,black]
(0, 0) -- (1, 0) -- (1, 1) -- (2, 1) -- (2, 2) -- (1, 2) -- (1, 3) -- (0, 3) -- (0, 2) -- (0, 1) -- cycle;
\fill (0,0) circle (2.5pt) node[left] {\scriptsize$\sigma(6)$};
\fill (1,0) circle (2.5pt) node[right] {\scriptsize$\sigma(7)$};
\fill (2,1) circle (2.5pt) node[right] {\scriptsize$\sigma(0)$};
\fill (2,2) circle (2.5pt) node[right] {\scriptsize$\sigma(1)$};
\fill (1,2) circle (2.5pt) node[above right] {\scriptsize$\sigma(2)$};
\fill (1,3) circle (2.5pt) node[right] {\scriptsize$\sigma(3)$};
\fill (0,2) circle (2.5pt) node[left] {\scriptsize$\sigma(4)$};
\fill (0,1) circle (2.5pt) node[left] {\scriptsize$\sigma(5)$};
\node at (0.5,1.5) {\small $A$};
\end{tikzpicture}}
\]

%%% 0
\myvcenter{\begin{tikzpicture}[x={(4mm,0mm)},y={(0mm,4mm)}]
\sqtA{(0,0)}
\end{tikzpicture}}
\quad
%%% 1
\myvcenter{\begin{tikzpicture}[x={(4mm,0mm)},y={(0mm,4mm)}]
\sqtA{(0,0)}
\sqtB{(-1,-1)}
\end{tikzpicture}}
\quad
%%% 2
\myvcenter{\begin{tikzpicture}[x={(4mm,0mm)},y={(0mm,4mm)}]
\sqtA{(0,0)}
\sqtB{(-1,-1)}
\sqtC{(1,-1)}
\end{tikzpicture}}
\quad
%%% 3
\myvcenter{\begin{tikzpicture}[x={(4mm,0mm)},y={(0mm,4mm)}]
\sqtA{(0,0)}
\sqtB{(-1,-1)}
\sqtC{(1,-1)}
\sqtA{(1,2)}
\end{tikzpicture}}
\quad
%%% 4
\myvcenter{\begin{tikzpicture}[x={(4mm,0mm)},y={(0mm,4mm)}]
\sqtA{(0,0)}
\sqtB{(-1,-1)}
\sqtC{(1,-1)}
\sqtA{(1,2)}
\sqtA{(-2,0)}
\sqtB{(-3,-1)}
\end{tikzpicture}}
\quad
%%% 5
\myvcenter{\begin{tikzpicture}[x={(4mm,0mm)},y={(0mm,4mm)}]
\sqtA{(0,0)}
\sqtB{(-1,-1)}
\sqtC{(1,-1)}
\sqtA{(1,2)}
\sqtA{(-2,0)}
\sqtB{(-3,-1)}
\sqtA{(0,-3)}
\sqtB{(-1,-4)}
\sqtC{(1,-4)}
\end{tikzpicture}}
\caption{Definition of the topological substitution $\tau$ obtained from the dual substitution associated with $1 \mapsto 13, 2 \mapsto 1, 3 \mapsto 2$ (top).
Six iterations from the tile $A$ are shown (bottom).}
\label{fig:exother}
\end{figure}

We describe how we derived the topological substitution $\tau$
from the dual substitution $\EOS(t)$ associated with the symbolic substitution
$t : 1 \mapsto 13, 2 \mapsto 1, 3 \mapsto 2$.
\begin{enumerate}
\item
Start with a single facet $[\mathbf 0,1]^*$
and compute $\EOS(t)^k([\mathbf 0,1]^*)$ with $k$ large enough,
in such a way that the patch $\EOS(t)^k([\mathbf 0,1]^*)$ contains
every possible neighboring couples of facets.
This is shown in Figure~\ref{fig:dualtotau} (left).

\item\label{item2}
Compute some more iterates by $\EOS(t)$
to ``inflate'' the tiles from single facets to patches of facets (metatiles).
This is shown in Figure~\ref{fig:dualtotau} (center),
where each metatile has the same color as its single-facet preimage.
We must iterate $\EOS(t)$ sufficiently many times ($3$ times in this case),
so that every intersection between two tiles is either empty
or consists of edges (single points are not allowed).

\item\label{item3}
Iterate $\EOS(t)$ one more time to ``read'' how the metatiles should be substituted.
This is where we extract the information to define the topological substitution $\tau$
in two steps:
    \begin{enumerate}
    \item We define the image of each tile by noticing that
        the image tiles are either one of the other metatiles,
        or a union of two metatiles:
    \[
    \myvcenter{\begin{tikzpicture}[x={(4mm,0mm)}, y={(0mm, 4mm)}]
    \fill[thick,fill=white,draw=black] (0,0)--(1,0)--(1,1)--(2,1)--(2,2)--(1,2)--(1,3)--(0,3)--cycle;
    \draw (0.5,0.5) node{\small$2$};
    \draw (0.5,1.5) node{\small$1$};
    \draw (1.5,1.5) node{\small$3$};
    \draw (0.5,2.5) node{\small$1$};
    \end{tikzpicture}}
    \mapsto
    \myvcenter{\begin{tikzpicture}[x={(4mm,0mm)}, y={(0mm, 4mm)}]
    \fill[thick,fill=white,draw=black] (0,0)--(1,0)--(1,1)--(2,1)--(2,2)--(1,2)--(1,3)--(0,3)--cycle;
    \draw (0.5,0.5) node{\small$2$};
    \draw (0.5,1.5) node{\small$1$};
    \draw (1.5,1.5) node{\small$3$};
    \draw (0.5,2.5) node{\small$1$};
    \fill[thick,fill=white,draw=black] (-1,-1)--(0,-1)--(0,1)--(-1,1)--cycle;
    \draw (-0.5,-0.5) node{\small$2$};
    \draw (-0.5,0.5) node{\small$1$};
    \end{tikzpicture}}
    \qquad
    \myvcenter{\begin{tikzpicture}[x={(4mm,0mm)}, y={(0mm, 4mm)}]
    \fill[thick,fill=white,draw=black] (-1,-1)--(0,-1)--(0,1)--(-1,1)--cycle;
    \draw (-0.5,-0.5) node{\small$2$};
    \draw (-0.5,0.5) node{\small$1$};
    \end{tikzpicture}}
    \mapsto
    \myvcenter{\begin{tikzpicture}[x={(4mm,0mm)}, y={(0mm, 4mm)}]
    \fill[thick,fill=white,draw=black] (0,0)--(1,0)--(1,1)--(2,1)--(2,2)--(0,2)--cycle;
    \draw (0.5,0.5) node{\small$2$};
    \draw (0.5,1.5) node{\small$1$};
    \draw (1.5,1.5) node{\small$3$};
    \end{tikzpicture}}
    \qquad
    \myvcenter{\begin{tikzpicture}[x={(4mm,0mm)}, y={(0mm, 4mm)}]
    \fill[thick,fill=white,draw=black] (0,0)--(1,0)--(1,1)--(2,1)--(2,2)--(0,2)--cycle;
    \draw (0.5,0.5) node{\small$2$};
    \draw (0.5,1.5) node{\small$1$};
    \draw (1.5,1.5) node{\small$3$};
    \end{tikzpicture}}
    \mapsto
    \myvcenter{\begin{tikzpicture}[x={(4mm,0mm)}, y={(0mm, 4mm)}]
    \fill[thick,fill=white,draw=black] (0,0)--(1,0)--(1,1)--(2,1)--(2,2)--(1,2)--(1,3)--(0,3)--cycle;
    \draw (0.5,0.5) node{\small$2$};
    \draw (0.5,1.5) node{\small$1$};
    \draw (1.5,1.5) node{\small$3$};
    \draw (0.5,2.5) node{\small$1$};
    \end{tikzpicture}}
    \]
    \item We define the boundaries' images by comparing the common edges between two adjacent metatiles
        and the common edges between their images in Figure~\ref{fig:dualtotau} (center and right).
    \end{enumerate}
\end{enumerate}

\begin{figure}[h!]
\centering
\myvcenter{\includegraphics[width=0.3\linewidth]{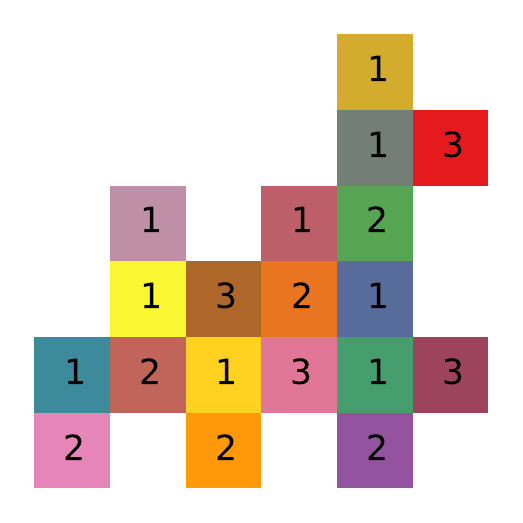}}
\hfil
\myvcenter{\includegraphics[width=0.3\linewidth]{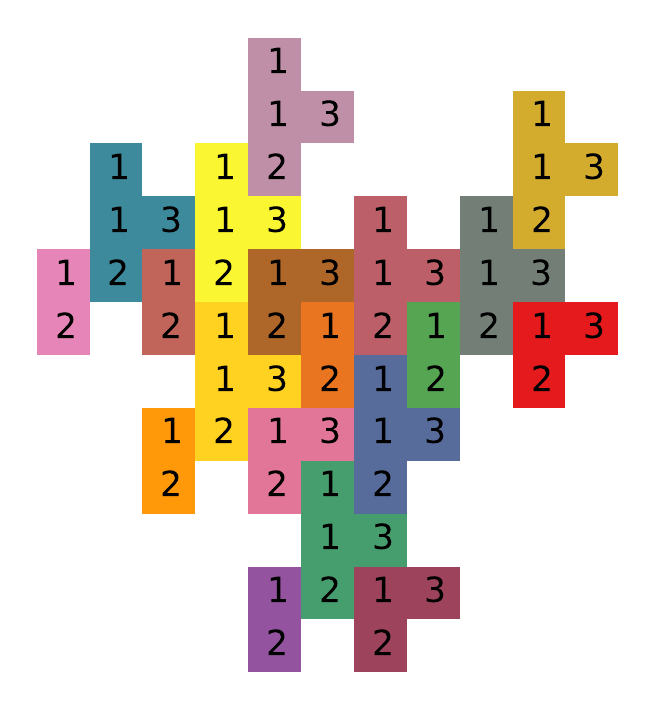}}
\hfil
\myvcenter{\includegraphics[width=0.3\linewidth]{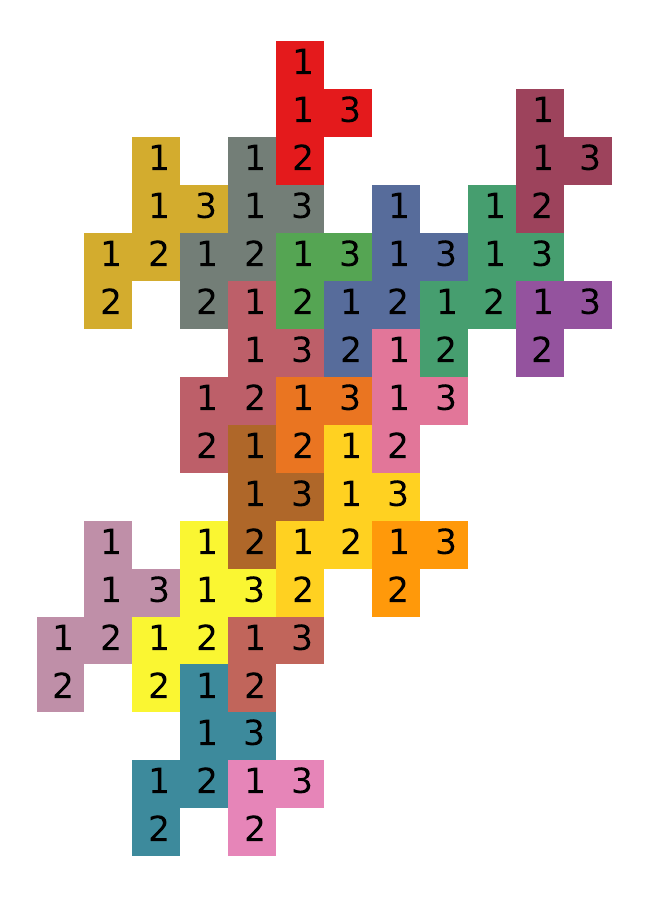}}
\caption{Deriving the topological substitution $\tau$ from the dual substitution $\EOS(t)$.
From left to right:
$\EOS(t)^7([\mathbf 0,1]^*)$,
$\EOS(t)^{10}([\mathbf 0,1]^*)$ and
$\EOS(t)^{11}([\mathbf 0,1]^*)$.
}
\label{fig:dualtotau}
\end{figure}

\paragraph{Limits of this approach}
Despite the fact that Rauzy fractals tilings have finite local complexity~\cite{ABBLS},
the method described above is not guaranteed to work in general for
an arbitrary dual substitution. The main problem is that in many cases,
the topology of the patterns produced by the dual substitution can be complicated
(disconnected or not simply connected, for example).
This can cause Step~\ref{item2} above to fail.

These difficulties are linked with some questions about the dynamics of the underlying Pisot substitution.
Indeed, it can be proved that the underlying Pisot substitution
has pure discrete spectrum if and only if the patterns generated by
its associated dual substitution contain arbitrarily large balls~\cite{CANT}.
This property is difficult to check for dual substitutions,
but easy to check for topological substitutions (see the core property in Section~\ref{sect:tritop}).
See~\cite{ABBLS} for more information about the Pisot conjecture and its different formulations.

Another possible approach to the original question raised in the introduction would be,
given an IFS with a topologically complicated attractor,
to construct another IFS with a topologically simpler attractor which gives a similar tiling,
and then apply the method described above.
For example, the tilings associated with the Tribonacci substitution and the
``flipped Tribonacci'' substitution $1\mapsto12, 2\mapsto31, 3\mapsto1$
are closely related: the tile positions are equal (but the neighbor relations change),
even though the topology of the flipped Tribonacci fractal is complicated.
However we do not know if this feasible in general, even in the case of Rauzy fractals.

\bibliographystyle{alpha}
\bibliography{biblio}

\newcommand{\etalchar}[1]{$^{#1}$}
\begin{thebibliography}{ABB{\etalchar{+}}15}

\bibitem[ABB{\etalchar{+}}15]{ABBLS}
S.~Akiyama, M.~Barge, V.~Berth{\'e}, J.-Y. Lee, and A.~Siegel.
\newblock On the {P}isot substitution conjecture.
\newblock In {\em Mathematics of aperiodic order}, volume 309 of {\em Progr.
  Math.}, pages 33--72. Birkh\"auser/Springer, Basel, 2015.

\bibitem[ABI02]{ABI02}
P.~Arnoux, V.~Berth{\'e}, and Sh. Ito.
\newblock Discrete planes, {$\mathbb Z^2$}-actions, {J}acobi-{P}erron algorithm
  and substitutions.
\newblock {\em Ann. Inst. Fourier}, 52(2):305--349, 2002.

\bibitem[ABS04]{Arn.Berth.Sieg.04}
P.~Arnoux, V.~Berth{\'e}, and A.~Siegel.
\newblock Two-dimensional iterated morphisms and discrete planes.
\newblock {\em Theoret. Comput. Sci.}, 319(1-3):145--176, 2004.

\bibitem[AI01]{Arn.Ito.01}
P.~Arnoux and S.~Ito.
\newblock Pisot substitutions and {R}auzy fractals.
\newblock {\em Bull. Belg. Math. Soc. Simon Stevin}, 8(2):181--207, 2001.

\bibitem[BBJS15]{BBJS15}
V.~Berth{\'e}, J.~Bourdon, T.~Jolivet, and A.~Siegel.
\newblock A combinatorial approach to products of pisot substitutions.
\newblock {\em Ergodic Theory and Dynamical Systems}, to appear:1--38, 2015.

\bibitem[BG13]{BG13}
M.~Baake and U.~Grimm.
\newblock {\em Aperiodic order. {V}ol. 1}, volume 149 of {\em Encyclopedia of
  Mathematics and its Applications}.
\newblock Cambridge University Press, Cambridge, 2013.
\newblock A mathematical invitation, With a foreword by Roger Penrose.

\bibitem[BH13]{Beda.Hil.10}
N.~Bédaride and A.~Hilion.
\newblock Geometric realizations of topological substitution.
\newblock {\em The Quarterly Journal of Mathematics}, 64(4):955--979, 2013.

\bibitem[BR10]{CANT}
V.~Berth{\'e} and M.~Rigo, editors.
\newblock {\em Combinatorics, automata and number theory}, volume 135 of {\em
  Encyclopedia of Mathematics and its Applications}.
\newblock Cambridge University Press, Cambridge, 2010.

\bibitem[BS97]{Bow.Ste.97}
Ph.~L. Bowers and K.~Stephenson.
\newblock A ``regular'' pentagonal tiling of the plane.
\newblock {\em Conform. Geom. Dyn.}, 1:58--68 (electronic), 1997.

\bibitem[CFP01]{Can.Flo.Par.01}
J.~W. Cannon, W.~J. Floyd, and W.~R. Parry.
\newblock Finite subdivision rules.
\newblock {\em Conform. Geom. Dyn.}, 5:153--196 (electronic), 2001.

\bibitem[CFP07]{Can.Flo.Par.07}
J.~W. Cannon, W.~J. Floyd, and W.~R. Parry.
\newblock Constructing subdivision rules from rational maps.
\newblock {\em Conform. Geom. Dyn.}, 11:128--136 (electronic), 2007.

\bibitem[CS98]{Can.Swe.98}
J.~W. Cannon and E.~L. Swenson.
\newblock Recognizing constant curvature discrete groups in dimension {$3$}.
\newblock {\em Trans. Amer. Math. Soc.}, 350(2):809--849, 1998.

\bibitem[CS01]{Can.Sieg.01}
V.~Canterini and A.~Siegel.
\newblock Geometric representation of substitutions of {P}isot type.
\newblock {\em Trans. Amer. Math. Soc.}, 353(12):5121--5144, 2001.

\bibitem[Fer07]{Fer07}
Th. Fernique.
\newblock Local rule substitutions and stepped surfaces.
\newblock {\em Theoret. Comput. Sci.}, 380(3):317--329, 2007.

\bibitem[FO10]{FO10}
Th. Fernique and N.~Ollinger.
\newblock Combinatorial substitutions and sofic tilings.
\newblock In {\em Journ\'ees {A}utomates {C}ellulaires}, TUCS Lecture Notes,
  pages 100--110, 2010.

\bibitem[Fra03]{Fra03}
N.~Priebe Frank.
\newblock Detecting combinatorial hierarchy in tilings using derived
  {V}orono\"\i\ tesselations.
\newblock {\em Discrete Comput. Geom.}, 29(3):459--476, 2003.

\bibitem[Fra08]{Fra08}
N.~Priebe Frank.
\newblock A primer of substitution tilings of the {E}uclidean plane.
\newblock {\em Expo. Math.}, 26(4):295--326, 2008.

\bibitem[GS87]{GS87}
Br. Gr{\"u}nbaum and G.~C. Shephard.
\newblock {\em Tilings and patterns}.
\newblock W. H. Freeman and Company, New York, 1987.

\bibitem[Hat02]{Hat.02}
Al. Hatcher.
\newblock {\em Algebraic topology}.
\newblock Cambridge University Press, Cambridge, 2002.

\bibitem[Hir94]{Hir.94}
M.~W. Hirsch.
\newblock {\em Differential topology}, volume~33 of {\em Graduate Texts in
  Mathematics}.
\newblock Springer-Verlag, New York, 1994.
\newblock Corrected reprint of the 1976 original.

\bibitem[IO93]{IO93}
Sh. Ito and Ma. Ohtsuki.
\newblock Modified {J}acobi-{P}erron algorithm and generating {M}arkov
  partitions for special hyperbolic toral automorphisms.
\newblock {\em Tokyo J. Math.}, 16(2):441--472, 1993.

\bibitem[JK14]{JK14}
T.~Jolivet and J.~Kari.
\newblock Undecidable properties of self-affine sets and multi-tape automata.
\newblock In {\em Mathematical foundations of computer science 2014. {P}art
  {I}}, volume 8634 of {\em Lecture Notes in Comput. Sci.}, pages 352--364.
  Springer, Heidelberg, 2014.

\bibitem[LM95]{LM95}
D.~Lind and Br. Marcus.
\newblock {\em An introduction to symbolic dynamics and coding}.
\newblock Cambridge University Press, Cambridge, 1995.

\bibitem[LW03]{LW03}
J.~C. Lagarias and Y.~Wang.
\newblock Substitution {D}elone sets.
\newblock {\em Discrete Comput. Geom.}, 29(2):175--209, 2003.

\bibitem[Pey86]{Pey.86}
J.~Peyri{\`e}re.
\newblock Frequency of patterns in certain graphs and in {P}enrose tilings.
\newblock {\em J. Physique}, 47(7, Suppl. Colloq. C3):C3--41--C3--62, 1986.
\newblock International workshop on aperiodic crystals (Les Houches, 1986).

\bibitem[PF02]{Pyt.02}
N.~Pytheas~Fogg.
\newblock {\em Substitutions in dynamics, arithmetics and combinatorics},
  volume 1794 of {\em Lecture Notes in Mathematics}.
\newblock Springer-Verlag, Berlin, 2002.
\newblock Edited by V.\ Berth\'e, S.\ Ferenczi, C.\ Mauduit and A.\ Siegel.

\bibitem[Rau82]{Rau.82}
G.~Rauzy.
\newblock Nombres alg\'ebriques et substitutions.
\newblock {\em Bull. Soc. Math. France}, 110(2):147--178, 1982.

\bibitem[Rob04]{Robin.04}
E.~Ar. Robinson, Jr.
\newblock Symbolic dynamics and tilings of {$\mathbb R\sp d$}.
\newblock In {\em Symbolic dynamics and its applications}, volume~60 of {\em
  Proc. Sympos. Appl. Math.}, pages 81--119. Amer. Math. Soc., Providence, RI,
  2004.

\bibitem[RS13]{Ram.13}
M.~Ramirez-Solano.
\newblock Construction of the discrete hull for the combinatorics of a regular
  pentagonal tiling of the plane.
\newblock arXiv:1303.5375, 2013.

\bibitem[Sol97]{Sol97}
B.~Solomyak.
\newblock Dynamics of self-similar tilings.
\newblock {\em Ergodic Theory Dynam. Systems}, 17(3):695--738, 1997.

\bibitem[ST09]{ST09}
A.~Siegel and J.~M. Thuswaldner.
\newblock Topological properties of {R}auzy fractals.
\newblock {\em M\'em. Soc. Math. Fr. (N.S.)}, (118):140, 2009.

\bibitem[Thu89]{Thu89}
W.~Thurston.
\newblock Groups, tilings, and finite state automata.
\newblock AMS Colloquium lecture notes, 1989.
\newblock Unpublished manuscript.

\end{thebibliography}

\end{document}